\newtheorem{theorem}{Theorem}[section]
\newtheorem*{theorem*}{Theorem}
\newtheorem{lemma}[theorem]{Lemma}
\newtheorem{proposition}[theorem]{Proposition}
\newtheorem{condition}{Condition}
\newcommand{\mycomment}[1]{}
\theoremstyle{definition}
\numberwithin{equation}{section}
\newcommand*{\abs}[1]{\left\lvert#1\right\rvert}
\newcommand*{\norm}[1]{\left\lVert#1\right\rVert}
\newcommand*{\sachant}[2]{\left.#1 \,\middle|\,#2\right.}
\def\bb#1{\mathbb{#1}}
\def\bs#1{\boldsymbol{#1}}
\def\bf#1{\mathbf{#1}}
\def\scr#1{\mathscr{#1}}
\def\geq{\geqslant}
\def\leq{\leqslant}
\def\phi{\varphi}
\newcommand\ee{\varepsilon}
\renewcommand\ll{\lambda}
\DeclareMathOperator{\dd}{d\!}
\DeclareMathOperator{\e}{e}
\DeclareMathOperator{\supp}{supp}
\begin{document}

\title[Branching processes in Markovian environment]
{Limit theorems  for critical branching processes in a finite state space Markovian environment}

\author{Ion Grama}
\curraddr[Grama, I.]{ Universit\'{e} de Bretagne-Sud, LMBA UMR CNRS 6205, Vannes, France}
\email{ion.grama@univ-ubs.fr}

\author{Ronan Lauvergnat}
\curraddr[Lauvergnat, R.]{Universit\'{e} de Bretagne-Sud, LMBA UMR CNRS 6205,
Vannes, France}
\email{ronan.lauvergnat@univ-ubs.fr}

\author{\'Emile Le Page}
\curraddr[Le Page, \'E.]{Universit\'{e} de Bretagne-Sud, LMBA UMR CNRS 6205,
Vannes, France}
\email{emile.le-page@univ-ubs.fr}

\begin{abstract}
Let $(Z_n)_{n\geq 0}$ be a critical branching process 
in a random environment 
defined by a Markov chain $(X_n)_{n\geq 0}$ 
with values in a finite state space $\bb X$.
Let $ S_n = \sum_{k=1}^n \ln  f_{X_k}'(1)$ 
be the Markov walk associated to $(X_n)_{n\geq 0}$, where 
$f_i$ is the offspring generating function when the environment is $i \in \bb X$.
Conditioned on the event $\{ Z_n>0\}$, we show 
the non degeneracy of limit law of the normalized number of particles ${Z_n}/{e^{S_n}}$ %jointly with $X_n$ 
and determine the limit of the law of $\frac{S_n}{\sqrt{n}} $ jointly with $X_n$.    
Based on these results we establish a Yaglom-type theorem 
which specifies the limit of the joint law of $ \log Z_n$ and  $X_n$ given $Z_n>0$.
\end{abstract}

\date{\today}
\subjclass[2010]{ Primary 60J80. Secondary 60J10. }
\keywords{Critical branching process, Markovian environment, Limit theorems, Yaglom type theorem}

\maketitle

\section{Introduction and main results} \label{sec not res}
One of the most used models in the dynamic of populations is 
the Galton-Watson branching process which
 %is one of the most used models in the dynamic of populations with 
has numerous applications in different areas such as physics, biology, medicine, economics etc. 
We refer the reader to  the books of
Harris \cite{harris2002theory} and Athreya and Ney \cite{athreya_branching_1972} for an introduction.
Branching processes in random environment have been first  considered by 
Smith and Wilkinson \cite{smith_branching_1969}, 
and Athreya and Karlin \cite{athreya1971branching1, athreya1971branching2}.
%The asymptotic behavior %of the survival probability of a 
This subject has been further studied by 
Kozlov \cite{kozlov_asymptotic_1977, kozlov_1995},  
Dekking \cite{dekking_survival_1987},
%Kozlov \cite{kozlov_1995},  
Liu \cite{liu1996survival},
D'Souza and Hambly \cite{dsouza_survival_1997}, 
Geiger and Kersting \cite{geiger_survival_2001}, 
Guivarc'h and Liu \cite{guivarch_proprietes_2001}, 
Geiger, Kersting and Vatutin \cite{geiger_limit_2003},
Afanasyev \cite{afanasyev_limit_2009}, 
Kersting and Vatutin \cite{kersting_vatutin_2017},
 to name only a few.
Recently, based on new conditioned limit theorems for sums of functions defined on Markov chains in
\cite{GLLP_affine_2016,grama_limit_2016-1,%GrLauvLePage_2018-SPA,
GLLP_CLLT_2017,grama_conditioned_2016},
the  exact asymptotic results for the survival probability when the environment is a Markov chain
have been obtained %some advances hav been made 
for branching processes in Markovian environment (BPME) in 
\cite{GrLauvLePage_2018-SPA}.
In this paper we shall  complement them by new results, such as a limit theorem for the normalized number of particles and an Yaglom-type theorem for BPME. 
 
We start by introducing the Markovian environment which is given
on the probability space $\left( \Omega, \scr F, \bb P \right)$  
by a homogeneous Markov chain $\left( X_n \right)_{n\geq 0}$ 
with values in the finite state space $\bb X$ 
and with the matrix of transition probabilities $\bf P = (\bf P (i,j))_{i,j\in \mathbb X}$. 
We suppose the following:
\begin{condition}
\label{primitif}
The Markov chain $\left( X_n \right)_{n\geq 0}$ is irreducible and aperiodic.
\end{condition}
Condition \ref{primitif} implies a spectral gap property for the transition operator 
$\bf P$ of $\left( X_n \right)_{n\geq 0}$, 
 defined by the relation $\bf P g (i) = \sum_{j\in \bb X} g(j) \bf P(i,j)$ for any $g$ 
in the space $\scr C (\mathbb X)$ of complex functions $g$ on $\bb X$ endowed with the norm
$\norm{g}_{\infty} = \sup_{x\in \bb X} \abs{g(x)}$. 
Indeed, Condition \ref{primitif} is necessary and sufficient  for 
the matrix $(\bf P (i,j))_{i,j\in \mathbb X}$ to be primitive (all entries of $\bf P^{k_0}$ are positive for some $k_0 \geq 1$).
By Perron-Frobenius theorem,
there exist positive constants $c_1$, $c_2$, a unique positive $\bf P$-invariant probability 
$\bs \nu$ on $\bb X$ ($\bs \nu(\mathbf P)=\bs\nu$) and an operator $Q$ on $\scr C(\mathbb X)$ such that, for any 
$g \in \scr C(\mathbb X)$ and $n \geq 1$, $i \in \bb X$,
\begin{align} \label{labSpectrGap}
\bf Pg(i) = \bs \nu(g) + Q(g)(i) \quad \text{and} \quad \norm{Q^n(g)}_{\infty} \leq c_1\e^{-c_2n} \norm{g}_{\infty},
\end{align}
where $Q \left(1 \right) = 0$ and $\bs \nu \left(Q(g) \right) = 0$ with 
$\bs \nu(g) := \sum_{i \in \bb X} g(i) \bs \nu(i)$. 
In particular, from \eqref{labSpectrGap}, it follows that, for any $(i,j) \in \bb X^2$,
\begin{equation}	\label{EQ:exprate001}
\abs{\bf P^n(i,j) - \bs \nu(j)} \leq c_1\e^{-c_2 n}.
\end{equation}
%We need more notation. 
Set $\mathbb N := \left\{0,1,2,\ldots\right\}$. 
For any $i\in \mathbb X$, let $\bb P_i$ be the probability law on $ \mathbb X^{\mathbb N} $ 
and $\bb E_i$ the associated expectation
generated by the finite dimensional distributions 
of the Markov chain $\left( X_n \right)_{n\geq 0}$ starting at $X_0 = i$.
Note that  
$\bf P^ng(i) = \bb E_i \left( g(X_n) \right)$, %$\bf P g(i) = \bb E_i \left( g(X_1) \right),$
for any $g \in \scr C(\mathbb X)$, $i \in \bb X$ and $n\geq 1.$
%Moreover $\bf P(i,j) = \bf P(\delta_j)(i)$, where $\delta_j(i) = 1$ if $i = j$ and $\delta_j(i) = 0$ otherwise.

 Assume that on the same probability space 
$\left( \Omega, \scr F, \bb P \right)$,
for any $i \in \bb X$, 
we are given 
%a collection of independent and identically distributed 
%random variables $( \xi_i^{n,j} )_{j,n \geq 1}$ having the same law  as the 
a random variable $\xi_i$ with  the probability generating function 
\begin{equation}
	\label{jazz}
	f_i(s) := \bb E \left( s^{\xi_i} \right), \quad s \in [0,1].
\end{equation}
Consider a collection of independent and identically distributed 
random variables $( \xi_i^{n,j} )_{j,n \geq 1}$ having the same law  
as the generic variable $\xi_i$.
The variable $\xi_{i}^{n,j}$ represents the number of children generated by the parent $j\in \{1, 2, \dots\}$  
at time $n$ when the environment  is $i$. 
Throughout the paper, the sequences $( \xi_i^{n,j} )_{j,n \geq 1}$, $i\in \bb X$ 
and the Markov chain $\left( X_n \right)_{n\geq 0}$ are supposed to be independent.

Denote by $\mathbb E$ the expectation associated to $\mathbb P.$ 
We assume that the variables $\xi_i$ have a positive means and finite second moments.
\begin{condition}
\label{Cond moments}
For any $i \in \bb X$, the random variable $\xi_i$ satisfies the following:
$\bb E \left( \xi_i \right)>0$ and $\bb E ( \xi_i^2 ) < +\infty.$
\end{condition}
From Condition \ref{Cond moments} it follows that $0< f_i'(1) < +\infty$ and $f_i''(1) < +\infty.$ %$i\in \bb X.$

We are now prepared to introduce the branching process $\left( Z_n \right)_{n\geq 0}$
in the Markovian environment $\left( X_n \right)_{n\geq 0}$.
The initial population size is $Z_0 = z \in \mathbb N$.  
For $n\geq 1$, we let $Z_{n-1}$ be the population size at time $n-1$ and
assume that at time $n$
%provided the environment takes value $X_{n} \in \mathbb X$,    
the parent $j\in \{1, \dots Z_{n-1} \}$ 
generates $\xi_{X_n}^{n,j}$ children.
%are given,
%Assuming that $Z_{n-1}$ is defined, for some $n\geq 1$,
%given the environment $X_{n} = i$, the offspring of the particle $j\in \{1, \dots Z_{n-1} \}$ is the random variable $\xi_{i}^{n,j}.$
%Therefore, 
Then the population size at time $n$ is given by
\begin{equation*}
\label{roseau}
Z_{n} = \sum_{j=1}^{Z_{n-1}} \xi_{X_{n}}^{n,j}, % \qquad n \geq 1,
\end{equation*}
 where  the empty sum is equal to $0$.
In particular, when $Z_0=0$, it follows that $Z_n=0$ for any $n\geq 1$. %is the initial number of particles.
 We note that for any $n\geq 1$ the variables $\xi_{i}^{n,j}$, $j\geq 1$,  $i\in \mathbb X$ 
are independent %of $X_0,\ldots,X_{n}$ and also 
of $Z_{0},\ldots,Z_{n-1}$. 
%\todo[noline]{Voir Vatutin ???}

Introduce the function $\rho: \mathbb X \mapsto \mathbb R$ satisfying 
\begin{equation*}
	\rho(i) = \ln  f_i'(1) , \quad i \in \bb X.
\end{equation*}
Along with  $\left( Z_n \right)_{n\geq 0}$ consider the Markov walk $\left( S_n \right)_{n\geq 0}$ 
such that $S_0 = 0$ and, for $n\geq 1$, 
\begin{equation}\label{petale}
S_n = \ln \left( f_{X_1}'(1) \cdots f_{X_n}'(1) \right)
= \sum_{k=1}^n \rho\left( X_k \right).
\end{equation} 

The couple $\left( X_n, Z_n\right)_{n\geq 0}$ 
%the coordinate process in $(\mathbb X \times \mathbb R)^{\mathbb N}$ which 
is a Markov chain with 
values in $\mathbb X \times \mathbb N $, whose transition operator $\widetilde{\mathbf P} $ is defined 
 by the following relation:
for any $i\in \mathbb X$, $z\in \mathbb N,$ $s\in [0,1]$ 
and $h: \mathbb X  \mapsto  \mathbb R$ bounded measurable,
\begin{align} \label{transprob001} 
%\sum_{z\in \mathbb N} s^{z} 
\widetilde{\mathbf  P} (h_s)(i,z) = \sum_{j \in \mathbb X} \mathbf P(i,j) h(j) [f_j(s)]^z, 
\end{align}
where $h_s(i,z)= h(i) s^z$.
Let $\bb P_{i,z}$ be the probability law on $ (\mathbb X \times \mathbb N )^{\mathbb N}$
 and $\bb E_{i,z}$ the associated expectation 
%$\left( \mathbb X^n , \scr F \right)$  
generated by the finite dimensional distributions 
of the Markov chain $\left( X_n, Z_n \right)_{n\geq 0}$ starting at $X_0 = i$ and $Z_0=z$.
%Obviously, 
%%the expectations  $\mathbb E_{i,z}$  and $\mathbb E_{i}$ are related as follows: 
%for any $i\in \mathbb X$, $z\in \mathbb N$, $n\geq 1$
%and $h: \mathbb X^n \mapsto  \mathbb R$ bounded measurable,
%\begin{align*}
%\mathbb E_{i,z} h(X_1,\ldots,X_n)
%%&=\mathbb E_{i,y} h_3(X_1,S_1,\ldots,X_n,S_n) \\
%%&=\mathbb E_{i,0} h_3(X_1,y+S_1,\ldots,X_n,y+S_n) \\ &
%=\mathbb E_{i} h(X_1,\ldots,X_n).
%\end{align*}
By straightforward  calculations, for any $i\in \mathbb X$, $z\in \mathbb N$, %$z\not=0,$
\begin{align} \label{wwzz}
\mathbb E_{i,z} (Z_n) = z \mathbb E_{i} (e^{S_n}).
\end{align}
%%%%%%%%%%%%%%%%%%%%%%%%%%%%%%
%%%%%%%%%%%%%%%%%%%%%%%%%%%%%%
%%%%%%%%%%%%%%%%%%%%%%%%%%%%%%
%%%%%%%%%%%%%%%%%%%%%%%%%%%%%%
%% do not remove, the might be necessary
%Indeed, ?????
%\begin{align} \label{transprob001} 
%\mathbb E _{i,z,y} Z_n 
%&= \frac{\partial}{\partial s}\mathbf  P ( s^{z})|_{s=1} = \frac{\partial}{\partial s} \sum_{j \in \mathbb X} \mathbf P(i,j) [f_j(s)]^z |_{s=1} \nonumber \\
%&= z \sum_{j \in \mathbb X} \mathbf P(i,j) [f_j(s)]^{z-1} e^{\ln f'_j(s)} |_{s=1} \nonumber  \\
%&= z \sum_{j \in \mathbb X} \mathbf P(i,j)  e^{\ln f'_j(1)} =  z \mathbb E _{i,z}e^{S_1} \nonumber \\
%&=  z \mathbb E _{i,y}e^{S_1}=  z \mathbb E _{i}e^{y+S_1}. 
%\end{align}
%Iterating we get \eqref{wwzz}. 
%%%%%%%%%%%%%%%%%%%%%%%%%%%%%%
%%%%%%%%%%%%%%%%%%%%%%%%%%%%%%
%%%%%%%%%%%%%%%%%%%%%%%%%%%%%%
%%%%%%%%%%%%%%%%%%%%%%%%%%%%%%

%%%%%%%%%%%%%%%%%%%%%%%%%%%%%%%%%%%%%%%%%%%%%%%%%%%%%%%%%
The following non-lattice condition 
is used indirectly in the proofs of the present paper; 
it is needed to ensure that the local limit theorem for the Markov walk \eqref{petale} holds true. 
%which is crucial to obtain the main results of the paper. 
%%%%%%%%%%%%%%%%%%%%%%%%%%%%%%%%%%%%%%%%%%%%%%%%%%%%%%
\begin{condition}
\label{Cond non-latice}
For any $\theta, a \in \bb R$, there exist $m\geq 0$ and a path $x_0, \dots, x_m$ in $\bb X$ such that
$\bf P(x_0,x_1) \cdots \bf P(x_{m-1},x_m) \bf P(x_m,x_0) > 0$
and
$$
\rho(x_0) + \cdots + \rho(x_m) - (m+1)\theta \notin a\bb Z.
$$
\end{condition}

 Condition 3 is an extension of the corresponding non-lattice condition for 
independent and identically distributed random variables $X_0, X_1, \ldots $, which can be stated as follows: 
there exists $m\geq 0$ such that  $X_0 + \cdots + X_m$ 
does not takes values in the lattice $(m+1)\theta + a\bb Z$ with some positive probability, 
whatever $\theta, a \in \bb R$. %In the independent case, 
Usually, the latter is formulated in an equivalent way with $m=0$.
For Markov chains, Condition \ref{Cond non-latice} is equivalent to the condition that the 
Fourier transform operator %$\mathbf P_{\lambda}$ defined below  
\begin{equation}
	\label{Fourier-transfoper}
	\bf P_{i t}g(i) := \bf P\left( \e^{i t  \rho} g \right)(i) = \bb E_{i} \left( \e^{it S_1} g(X_1) \right), 
	%\quad \mbox{for}
	\quad g \in \scr C(\mathbb X), i \in \bb X,
\end{equation}
%of the Markov chain $\left( X_n \right)_{n\geq 0}$ 
has a spectral radius strictly less than $1$ for $t\not= 0$, see Lemma 4.1 of \cite{GLLP_CLLT_2017}. 
%We refer for details to  \cite{GLLP_CLLT_2017},  where the present formulation is taken from.
%to which we refer for equivalent statements. 
%In particular %Roughly speaking, it says that there exists at least one trajectory which is is not 
Non-latticity for Markov chains with not necessarily finite state spaces is considered, for instance, in Shurenkov \cite{shur_1984} 
and Alsmeyer \cite{alsmeyer1994}.  

For the following facts and definitions we refer to \cite{GrLauvLePage_2018-SPA}. 
Under Condition \ref{primitif}, 
from the spectral gap property of the operator $\mathbf P$  %(see Section \ref{nenuphar}) 
it follows that,  %and \ref{Cond non-latice}, 
for any $\ll \in \bb R$ and any $i \in \bb X$, 
the limit
\begin{align*}
k(\ll) := \lim_{n\to +\infty} \bb E_{i}^{1/n} \left( \e^{\ll S_n} \right)
\end{align*}
exists and does not depend on the initial state of the Markov chain $X_0=i$.
%The function $k$, up to a logarithmic transform, is similar to the function $\Lambda$ in %D'Souza and Hambly \cite{dsouza_survival_1997}.
Moreover, the number $k(\lambda)$ is the spectral radius of  the transfer operator $\bf P_{\ll}$: 
\begin{equation}
	\label{transfoper}
	\bf P_{\ll}g(i) := \bf P\left( \e^{\ll \rho} g \right)(i) = \bb E_{i} \left( \e^{\ll S_1} g(X_1) \right), 
	%\quad \mbox{for}
	\quad g \in \scr C(\mathbb X), i \in \bb X.
\end{equation}
In particular, under Conditions \ref{primitif} and \ref{Cond non-latice},  
$k(\lambda)$ is a simple eigenvalue of the operator $\mathbf P_{\ll}$
and there is no other eigenvalue of modulus $k(\lambda)$. 
%The subspace spanned by the eigenfunctions corresponding to the eigenvalue $k(\lambda)$ is of dimension $1$
% and is generated by an eigenfunction $v_{\lambda}$ which is positive.
In addition, the function $k(\ll)$ is analytic on $\mathbb R.$ % see Lemma \ref{mulet}. 
%Note also that for any $\lambda \not = 0$ the transfer operator $\mathbf P_{\ll}$ is not Markovian.  
%It can be easily normalized in the following way:  
%the new operator $\tbf P_{\ll}g = \frac{\bf P_{\ll}(gv_{\ll})}{k(\ll)v_{\ll}}$ acting on $\scr C(\mathbb X)$ is Markovian. 
%Denote by $\tbs \nu_{\ll}$ its unique invariant probability measure. 

%Following \cite{GrLauvLePage_2018-SPA}, 
The branching process in Markovian environment is said to be \textit{subcritical} if $k'(0)<0$, \textit{critical} if $k'(0)=0$ and \textit{supercritical} if $k'(0)>0$.
The following identity, has been established in \cite{GrLauvLePage_2018-SPA}: 
\begin{equation}
\label{classifiid}
k'(0) = \bs \nu(\rho) = \bb E_{\bs \nu} \left( \rho(X_1) \right) = \bb E_{\bs \nu} \left(  \ln f_{X_1}'(1) \right)
=\phi' (0),
\end{equation}
where $\mathbb E_{\bs \nu} $ is the expectation generated by the finite dimensional distributions 
of the Markov chain $\left( X_n \right)_{n\geq 0}$ in the stationary regime and 
$\phi
(\ll)=\bb E_{\bs \nu} 
( \exp\{\ll\ln  f_{X_1}'(1)\} )$, $\ll \in \bb R.$
Relation \eqref{classifiid}  proves that the classification made in the case of branching processes
with Markovian environment 
and that for independent and identically distributed %i.i.d.
 environment are coherent:   
when the random variables $\left( X_n \right)_{n\geq 1}$ 
are i.i.d.\ with common law $\bs \nu$, from \eqref{classifiid} it follows that the two classifications coincide.

In the present paper we will focus on the critical case: $k'(0)=0$.
Our first result establishes the exact asymptotic of the survival probability of $Z_n$ jointly with the event $\{X_n=j\}$ when the branching process starts with $z$ particles.
\begin{theorem}
\label{Tsurvival001} 
Assume Conditions \ref{primitif}-\ref{Cond non-latice} and
$k'(0)  = 0.$
Then, there exists a positive function $u(i,z): \bb X \times \bb N \mapsto \mathbb R_{+}^{*}$ such that for any $(i,j) \in \bb X^2$ 
and $z \in \bb N$,  $z\not=0,$
\[
\bb P_{i,z} \left( Z_n > 0 \,,\, X_n = j \right) \underset{n \to +\infty}{\sim} \frac {u(i,z) \bs \nu (j) } {\sqrt{n}}.
\]
\end{theorem}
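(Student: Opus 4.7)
My plan is to adapt the strategy of the companion paper \cite{GrLauvLePage_2018-SPA}, where the marginal asymptotic $\bb P_{i,z}(Z_n>0) \sim u(i,z)/\sqrt{n}$ is established, and to refine it in order to track the terminal state $X_n = j$. The starting identity is
\begin{equation*}
\bb P_{i,z}(Z_n > 0,\, X_n = j) = \bb E_i\!\left[\mathbf{1}_{\{X_n=j\}} \left(1 - q_n^z\right)\right], \qquad q_n := f_{X_1}\circ\cdots\circ f_{X_n}(0),
\end{equation*}
which follows by conditioning on the environment and using independence of the offspring variables. A Geiger--Kersting type recursion adapted to the Markovian environment as in \cite{GrLauvLePage_2018-SPA} then controls $1-q_n$ in terms of the Markov walk $(S_k)_{k\geq 0}$: on the event $\{\min_{0\leq k\leq n}S_k \geq -M\}$ one has $1-q_n \approx 1/A_n$ with $A_n = \sum_{k=0}^{n-1} \e^{-S_k}\eta_{k+1}$ a positive additive functional of $(X_k,S_k)$ with $\eta_{k+1}$ a bounded functional of $X_{k+1}$, while on the complementary event $1-q_n$ is so small that its contribution is negligible.

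The second and crucial ingredient is the joint conditioned local limit theorem for the Markov walk $(X_k,S_k)$ killed on leaving $[0,+\infty)$, established in \cite{GLLP_CLLT_2017,grama_conditioned_2016}. In its integrated form, for any bounded continuous $\psi:\mathbb R_+\to \mathbb R$,
\begin{equation*}
\bb E_i\!\left[\psi\bigl(S_n/\sqrt{n}\bigr)\mathbf{1}_{\{X_n = j\}}\mathbf{1}_{\{\tau > n\}}\right] \sim \frac{V(i)\,\bs\nu(j)}{\sqrt{n}}\int_0^{+\infty}\psi(t)\,\mu(\mathrm{d}t),
\end{equation*}
where $\tau$ denotes the first entry time of $S$ into $(-\infty,0)$, $V$ is the $\mathbf{P}_+$-harmonic function for the killed walk, and $\mu$ is the Rayleigh-type limit law of $S_n/\sqrt{n}$ conditioned on $\{\tau>n\}$. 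The factor $\bs\nu(j)$ appearing here is precisely the source of the $\bs\nu(j)$ in the statement.

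To combine the two ingredients, I would translate the walk by a cut-off level $y\geq M$, split the expectation according to $\{\min_k S_k\geq -y\}$ and its complement, and reduce the main contribution to an expression of the form $\bb E_i\!\left[\mathbf{1}_{\{X_n=j,\,\tau_{-y}>n\}} G_{z}(A_n)\right]$ with $G_z(a) = 1-(1-1/a)^z \leq z/a$. After rescaling by $\sqrt{n}$ and applying the joint conditioned local limit theorem, the factor $\bs\nu(j)$ decouples; the remaining integral, in the limit $y\to +\infty$, defines $u(i,z)$ as a positive function, inheriting positivity from the $\mathbf{P}_+$-harmonicity of $V$. The contribution from the complementary event $\{\min_k S_k < -y\}$ is shown to be $\oo{1/\sqrt{n}}$ uniformly in $j$ by combining the spectral gap estimate \eqref{EQ:exprate001} with the moment assumption in Condition \ref{Cond moments}.

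\textbf{Main obstacle.} The delicate point is to show that the joint asymptotic inherits the factorized form $V(i)\bs\nu(j)$ \emph{through} the nonlinear functional $q_n^z$, which depends on the entire trajectory $(X_k,S_k)_{k\leq n}$. This requires uniform-in-$j$ control of the Geiger--Kersting approximation error, together with the full joint conditioned local limit theorem (rather than its marginal version used in \cite{GrLauvLePage_2018-SPA}), so that the product structure $V(i)\bs\nu(j)$ emerges cleanly from the additive functional $A_n$ after rescaling.
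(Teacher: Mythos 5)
Your outline follows the same general strategy as the paper (Agresti-type representation of $q_{n,z}$, cut-off at a level $y$, conditioned-to-stay-positive machinery, then $y\to\infty$), but it leaves the central step unproved. The conditioned limit theorem you invoke concerns functions of $S_n/\sqrt{n}$ alone (in the paper, Proposition \ref{Prop-CondLimTh}); it cannot be ``applied'' to $G_z(A_n)$, since $A_n$ (equivalently $q_{n,z}(0)$) is a functional of the whole trajectory $(X_k,S_k)_{k\leq n}$. You flag exactly this as the ``main obstacle'' but give no mechanism for resolving it, and this is where the factorization $V(i,y)\,U(i,y,z)\,\bs\nu(j)$ --- i.e.\ the actual content of the theorem --- is produced. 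The paper's resolution is the change of measure \eqref{changemes001} built on the harmonic function $V$ together with a freezing argument: for a bounded adapted sequence $Y_n\to Y_\infty$ $\bb P^+$-a.s., one first replaces $Y_n$ by $Y_k$ with $k$ fixed, for which the joint asymptotics of $\bb P_i(X_n=j,\tau_y>n)$ from Proposition \ref{Prop Init002} suffices (Lemma \ref{lemmaE+}), and then controls $\sqrt{n}\,\bb E_{i,z}(|Y_n-Y_k|;\tau_y>n)$ using the bound $\bb P_{i'}(\tau_{y'}>n')\leq c\,V(i',y')/\sqrt{n'}$ and harmonicity (Lemma \ref{lemma limit 002}); applied with $Y_n=\mathbbm 1_{\{Z_n>0\}}$ this needs $q_{n,z}(0)\to q_{\infty,z}(0)$ in $L^1(\bb P^+)$ (Lemma \ref{lemma q_n conv vers q }, resting on Lemma \ref{lemma sum eS_n }). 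Some argument of this type (or an equivalent uniform-in-$j$ control) is indispensable, and your proposal does not supply it.

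Two further points are misstated. First, the contribution of $\{\tau_y\leq n\}$ is \emph{not} $\oo{1/\sqrt{n}}$ for fixed $y$, and the spectral gap plus Condition \ref{Cond moments} is not the right tool: the correct estimate is $\sup_i\bb P_{i,z}(Z_n>0,\tau_y\leq n)\leq c\,z\,\e^{-y}(1+y)/\sqrt{n}$ (Lemma \ref{lemma bound tau<=n}), obtained from the convexity bound $1-q_{n,z}(0)\leq z\,\e^{\min_{k\leq n}S_k}$ and the exit-time estimate of Proposition \ref{Prop Init002}; it becomes negligible only after $y\to\infty$, and the existence of $\lim_{y\to\infty}V(i,y)U(i,y,z)$ then has to be justified (the paper uses monotonicity in $y$ of $\bb P_{i,z}(Z_n>0,X_n=j,\tau_y>n)$ sandwiched between the limsup and liminf bounds in Proposition \ref{prop-converg y 001}). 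Second, positivity of $u(i,z)$ does not follow from ``$\bf P_+$-harmonicity of $V$'': one needs $\bb E^+_{i,y}q_{\infty,z}(0)>0$, which the paper deduces from $\bb E^+_{i,y}q_{\infty,z}(0)^{-1}<\infty$ (finiteness of $\sum_k\e^{-S_k}$ under $\bb P^+$), combined with $V(i,y)>0$ for $y$ large.
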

An explicit formula for $u(i,z)$ is given in Proposition \ref{prop-converg y 001}. 
In the case $z=1$, Theorem \ref{Tsurvival001}  has been proved in \cite[Theorem 1.1]{GrLauvLePage_2018-SPA}.
The proof for the case $z>1$, which is not a direct consequence of the case $z=1$, will be given in Proposition \ref{prop-converg y 001}. 

We shall complement the previous statement by studying the asymptotic behavior of $Z_n$ given $Z_n>0$
under the following condition:
\begin{condition} \label{cond Kersting2017}
The random variables $\xi_i$, $i\in \mathbb X$ satisfy:
\begin{align*}
\inf_{ i\in\mathbb X} \mathbb P (\xi_i \geq 2) >0.  
\end{align*}
\end{condition}

%\todos{In answer to a referee's remark we added a comment to Condition \ref{cond Kersting2017}.}
Condition \ref{cond Kersting2017} is quite natural — 
it tells that each parent can generate more than 1 child with positive probability.  
%It is technical, and 
In the present paper is used to prove the non-degeneracy of the limit
of the martingale $(\frac{Z_{n}}{e^{S_{n}}})_{n\geq0}$ 
in key Lemma \ref{lemma bound e power}.  

The next result concerns the non degeneracy of the limit law of the properly normalized number of particles $Z_n$ at time $n$
jointly with the event $\{X_n=j\}$.
\begin{theorem}
\label{T-Loi-limite001}
Assume Conditions \ref{primitif}-\ref{cond Kersting2017} and
$k'(0)  = 0.$
Then, 
for any $i \in \bb X$, %$z \in \bb N^*$ 
$z \in \bb N$,  $z\not=0,$ 
there exists a probability measure $\mu_{i,z}$ on $\bb R_{+}$ such that,
for any continuity point $t\geq0$ of the distribution function $\mu_{i,z} ([0,\cdot])$
%$t\geq0$ satisfying $\mu_{i,z} (\{t\})=0$ 
and $j \in \bb X$, 
it holds that
$$
\lim_{n\to\infty} \sqrt{n} \bb P_{i,z} \left(  \frac{Z_n}{e^{S_n}}  \leq t, X_n = j, Z_n>0 \right) = \mu_{i,z} ([0,t]) \bs \nu (j) u(i,z)
$$
and
$$
\lim_{n\to\infty} \bb P_{i,z} \left(  \frac{Z_n}{e^{S_n}}  \leq t, X_n = j \big| Z_n>0 \right) = \mu_{i,z} ([0,t]) \bs \nu (j).
$$
Moreover, it holds that $\mu_{i,z} (\{0\})=0$.
\end{theorem}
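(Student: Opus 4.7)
The plan is to reduce the theorem to a statement about the nonnegative martingale $W_n := Z_n/e^{S_n}$ studied under a Doob $h$-transformed measure, the harmonic function of the transform being $u(i,z)$ from Theorem \ref{Tsurvival001}. The factorisation of the limit into $\mu_{i,z}([0,t]) \cdot \bs \nu(j) \cdot u(i,z)$ points to three separate ingredients: $u(i,z)$ is delivered by Theorem \ref{Tsurvival001}, $\bs \nu(j)$ will reflect the spectral gap \eqref{labSpectrGap} of $\bf P$, and $\mu_{i,z}$ will be identified as the law of the almost-sure limit of $W_n$ under the transformed dynamics.

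First I would verify, with $\mathcal G_n := \sigma(X_k, Z_k : 0 \leq k \leq n)$, that the identity $\mathbb E(Z_{n+1} \mid \mathcal G_n, X_{n+1}) = Z_n f_{X_{n+1}}'(1) = Z_n e^{\rho(X_{n+1})}$ implies $W_n$ is a nonnegative $(\mathcal G_n)$-martingale under $\bb P_{i,z}$; Doob's theorem gives $W_n \to W_\infty$ a.s., with $W_\infty = 0$ $\bb P_{i,z}$-a.s.\ in the critical case (extinction is certain), so the theorem really concerns the conditional law on the rare event $\{Z_n > 0\}$. Next, combining Theorem \ref{Tsurvival001} with the Markov property at time $1$ and matching asymptotics yields
\begin{equation*}
u(i,z) = \mathbb E_{i,z}\bigl[ u(X_1, Z_1) \bigr],
\end{equation*}
so, with the convention $u(\cdot,0)=0$, the sequence $(u(X_n, Z_n))_{n\geq 0}$ is a nonnegative $\bb P_{i,z}$-martingale. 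This allows me to define the Doob $h$-transform
\begin{equation*}
\bb Q_{i,z}(A) := \frac{1}{u(i,z)} \mathbb E_{i,z}\bigl[ \mathbb 1_A \, u(X_n, Z_n) \bigr], \quad A \in \mathcal G_n,
\end{equation*}
which by Kolmogorov extension determines a probability measure on the path space; under $\bb Q_{i,z}$ one has $Z_n > 0$ for every $n$ almost surely.

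The main obstacle is to show that $W_n$ converges $\bb Q_{i,z}$-a.s.\ to some $W_\infty$ with $\bb Q_{i,z}(W_\infty > 0) = 1$, equivalently $\mu_{i,z}(\{0\}) = 0$. Convergence of $W_n$ under $\bb Q_{i,z}$ can be extracted from an $L^p$-bound ($p>1$) exploiting the second-moment hypothesis in Condition \ref{Cond moments}; the strict positivity of the limit is much more delicate and is the content of the key Lemma \ref{lemma bound e power}, where Condition \ref{cond Kersting2017} enters essentially. Heuristically, under $\bb Q_{i,z}$ the Markov walk $S_n$ behaves like a walk conditioned to stay positive and tends to $+\infty$, so $e^{-S_n} \to 0$; the uniform lower bound $\inf_{i \in \bb X} \bb P(\xi_i \geq 2) > 0$ then forces $Z_n$ to grow at a comparable exponential rate, preventing $W_n$ from collapsing to zero. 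This is the Markovian-environment counterpart of the Geiger--Kersting non-degeneracy argument in the i.i.d.\ case.

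Finally, joint convergence of $(W_n, X_n)$ under $\bb Q_{i,z}$ follows from a mixing argument: conditioning on $\mathcal G_{n-K}$ for a fixed large $K$ and then letting $n \to \infty$, the variable $X_n$ depends only on the last $K$ Markov transitions whose marginal is close to $\bs \nu$ by \eqref{labSpectrGap}, while $W_n$ differs from $W_{n-K}$ by a factor that stabilises; letting $K\to\infty$ afterwards yields
\begin{equation*}
\lim_{n \to \infty} \bb Q_{i,z}(W_n \leq t, X_n = j) = \mu_{i,z}([0,t]) \cdot \bs \nu(j).
\end{equation*}
To translate back to $\bb P_{i,z}$, I would rewrite $\sqrt{n} \bb P_{i,z}(W_n \leq t, X_n = j, Z_n > 0)$ using the Markov property at a small fixed time $k$, apply a uniform version of Theorem \ref{Tsurvival001} on the shifted process over $[k,n]$, and identify the resulting constant with $u(i,z) \bs \nu(j) \mu_{i,z}([0,t])$ via the $\bb Q$-limit above together with a dominated-convergence argument. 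Dividing by $\bb P_{i,z}(Z_n > 0) \sim u(i,z)/\sqrt n$ then yields the conditional form of the theorem.
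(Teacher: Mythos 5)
Your overall intuition (a conditioned-to-survive change of measure, the limit law as the law of the martingale limit $W_\infty$ under the new dynamics, the factor $\bs \nu(j)$ from the spectral gap, non-degeneracy via a Kersting-type argument under Condition \ref{cond Kersting2017}) is in the right spirit, but two steps of the proposal have genuine gaps, and they are precisely the steps the paper's proof is built to handle. First, the convergence of $W_n=Z_n e^{-S_n}$ under your $h$-transform $\bb Q_{i,z}$ is not justified. Because the density $u(X_n,Z_n)/u(i,z)$ depends on $Z_n$, under $\bb Q_{i,z}$ the process $W_n$ is no longer a martingale, not even conditionally on the environment; and $\bb Q_{i,z}$ is not absolutely continuous with respect to $\bb P_{i,z}$ on the tail $\sigma$-field (under $\bb P_{i,z}$ one has $W_n\to 0$ a.s., while you want a positive limit under $\bb Q_{i,z}$), so neither quenched martingale convergence nor transfer of $\bb P$-a.s.\ convergence applies. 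The proposed $L^p$ bound with $p>1$ from Condition \ref{Cond moments} is unsubstantiated: in the critical case no such uniform bound is available. The paper avoids this by changing the measure only through the environment, via the $V$-transform $\bb P^+_{i,0,z}$ of \eqref{changemes001}; under that measure $W_n$ remains a nonnegative martingale, and Kersting's theorem, applied conditionally on the environment together with Lemma \ref{lemma boundprob001}, gives the non-degeneracy (Lemma \ref{lemma bound e power}).

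Second, the transfer back to $\bb P_{i,z}$ is hand-waved and, as sketched, circular. Conditioning at a fixed small time $k$ and invoking ``a uniform version of Theorem \ref{Tsurvival001} on the shifted process'' can only control $\bb P(Z_n>0)$-type quantities; the event $\{Z_ne^{-S_n}\leq t,\ X_n=j,\ Z_n>0\}$ involves the time-$n$ value of $W_n$ itself, and a Theorem \ref{Tsurvival001}-type statement that also tracks $W_n\leq t$ is exactly what you are trying to prove. The paper's actual mechanism is different: it decomposes over the last minimum $T_n$ of the Markov walk (identities \eqref{relat001}--\eqref{relat003}), kills the diagonal term by the duality bound $\bb P_{i,z}(T_n=n, Z_n>0)\leq c z n^{-3/2}$ (Lemma \ref{lemma bound n3/2}), and applies to the post-minimum block, where the walk stays positive, the Laplace-transform limit of Lemma \ref{lemma bound e power} under $\bb P^+$; summing over the minimum time gives Proposition \ref{PropTT12-003}, and $\mu_{i,z}$ is identified through its Laplace transform $u(i,z,e^{-s})/u(i,z)$ as a mixture over the time and position of the minimum — not simply the law of $W_\infty$ under a single $h$-transform. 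Even granting your (plausible, but itself requiring a domination argument) harmonicity identity $u(i,z)=\bb E_{i,z}(u(X_1,Z_1))$, identifying your $\bb Q$-limit with $\mu_{i,z}$ and obtaining the $\sqrt n$ asymptotics would force you to redo this last-minimum analysis, so the proposal as it stands does not constitute a proof.
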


%The following assertion \todo{Introduce $\sigma$}
%show the equivalence between $Z_n$ and $S_n$ which 
%is used in the proof of the Yaglom-type theorem stated below. % Theorem \ref{T-Loi-limite002}. Theorem \ref{T-Loi-limite001}

From \cite[Lemma 10.3]{GLLP_CLLT_2017} it follows that, under Conditions \ref{primitif} and \ref{Cond non-latice},
the quantity 
\begin{equation}
\label{comete}
\sigma^2 := \bs \nu \left( \rho^2 \right) -  \bs \nu \left( \rho \right)^2 + 2 \sum_{n=1}^{+\infty} \left[ \bs \nu \left( \rho \bf P^n \rho \right) -  \bs \nu \left( \rho \right)^2 \right]
\end{equation}
is finite and positive, i.e.\ $0< \sigma <\infty$. 
%Set
%$$
%\sigma^2 = \nu (\rho^2) +  2\sum_{k=1}^\infty (\nu (\rho \; \mathbf  P^n \rho )).
%$$
%It follows from the reults in \cite{grama_limit_2016-1} that under Conditions \ref{primitif} and \ref{Cond non-latice} it holds  $0<\sigma <\infty.$
Let  
 $$\Phi^{+} (t) =  (1 - e^{-\frac{t^2}{2}}) \mathbbm 1(t\geq 0),\ \ t\in \mathbb R,$$
be the Rayleigh distribution function. 
The following assertion gives the asymptotic behavior of the normalized Markov walk $S_n$
jointly with $X_n$ provided $Z_n>0$.
%show the equivalence between $Z_n$ and $S_n$ which 
%is used in the proof of the Yaglom-type theorem stated below. % Theorem \ref{T-Loi-limite002}. Theorem \ref{T-Loi-limite001}
\begin{theorem}
\label{T-Loi-limite003}
%Assume Conditions \ref{primitif}-\ref{Cond non-latice} and $k'(0)  = 0.$
Assume Conditions \ref{primitif}-\ref{cond Kersting2017} and $k'(0)  = 0.$
Then, 
for any $i,j \in \bb X$, %$z \in \bb N^*$ 
$z \in \bb N$,  $z\not=0$
%there exists a probability measure $\mu_{i,z}$ on $\bb R_{+}$ such that for any 
%$j \in \bb X$
and $t \in \mathbb R$ ,
%satisfying $\mu_{i,z} (\{t\})=0,$ it holds that 
%a continuity point of the distribution function $\mu_{i,z} ([0,\cdot])$,
$$
\lim_{n\to\infty} \sqrt{n} \bb P_{i,z} 
\left(  \frac{ S_n}{\sigma\sqrt{n}} \leq t, X_n = j,  Z_n>0 \right) 
= \Phi^{+} (t) \bs \nu (j) u(i,z)
$$
and
$$
\lim_{n\to\infty}  \bb P_{i,z} 
\left(  \frac{ S_n}{\sigma\sqrt{n}} \leq t, X_n = j \big|  Z_n>0 \right) 
= \Phi^{+} (t) \bs \nu (j).
$$
%Moreover, it holds that $\mu_{i,z} (\{0\})=0$.
\end{theorem}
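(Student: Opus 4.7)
The plan is to adapt the Afanasyev--Geiger--Kersting--Vatutin strategy for critical branching processes in random environment to the Markovian setting, combining the survival asymptotic of Theorem \ref{Tsurvival001} with a conditional central limit theorem for the associated Markov walk $(S_n)$. The guiding picture is that, on the event $\{Z_n > 0\}$, the trajectory $(S_k)_{k \leq n}$ behaves asymptotically like the Markov walk $(S_k, X_k)$ conditioned to stay above an arbitrary fixed threshold, whose rescaled terminal position is known to converge to the Rayleigh distribution jointly with $X_n$ distributed according to $\bs\nu$.

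First, I would exploit the integral representation of the harmonic function $u(i,z)$ provided by Proposition \ref{prop-converg y 001}. As in \cite{GrLauvLePage_2018-SPA}, this function should arise as the limit of a properly rescaled functional of the walk conditioned to remain nonnegative, which will be crucial to identify the prefactor in the final limit. Second, I would compare the survival event with a walk-conditioning event. Using the Geiger--Kersting expansion
\begin{equation*}
\bb P_{i,z}\!\left(Z_n = 0 \,\big|\, X_0, \dots, X_n\right) = \bigl[f_{X_1}\circ \cdots \circ f_{X_n}(0)\bigr]^z,
\end{equation*}
together with the key Lemma \ref{lemma bound e power} controlling the martingale $Z_n/e^{S_n}$ from below, one shows that conditionally on $\{Z_n > 0\}$ the random variable $\min_{k \leq n} S_k$ is tight. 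Hence, modulo an error that vanishes (after multiplication by $\sqrt{n}$) as the auxiliary parameter $y \to +\infty$, the event $\{Z_n > 0\}$ can be framed between events of the form $\{\tau_{-y} > n\}$, where $\tau_{-y} := \inf\{k \geq 1 : S_k < -y\}$.

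Third, I would invoke the conditioned central limit theorem for Markov walks from \cite{grama_conditioned_2016, GLLP_CLLT_2017}: conditionally on $\{\tau_{-y} > n\}$ the pair $(S_n/(\sigma\sqrt{n}), X_n)$ converges, after renormalization by $\sqrt{n}$, to the product of the Rayleigh law with $\bs\nu$, up to a positive harmonic factor $V(i,y)$. Combining this with the framing from the previous step, and pinning down the overall constant via Theorem \ref{Tsurvival001} (obtained by letting $t \to +\infty$ in the desired limit), one recovers the first asserted statement. The second assertion then follows at once on dividing by $\sqrt{n}\,\bb P_{i,z}(Z_n > 0) \to u(i,z)$, which comes from summing Theorem \ref{Tsurvival001} over $j \in \bb X$.

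The hardest step will be the second one: the rigorous comparison between the branching-process survival event and the walk-staying-nonnegative event. In contrast to the i.i.d.\ setting, the fluctuations of $S_n$ and of the quenched extinction probabilities are now coupled through the Markov chain $(X_n)$, so one needs uniform estimates on the iterated generating functions $f_{X_1} \circ \cdots \circ f_{X_n}(0)$ along typical Markovian trajectories, together with fine control of the lower excursions of $S_n$ via the transfer-operator and non-lattice machinery underlying Condition \ref{Cond non-latice}. These ingredients appear to be already in place in the earlier parts of the paper (notably in Lemma \ref{lemma bound e power} and the analysis leading to Theorem \ref{T-Loi-limite001}), so that the final step essentially reduces to invoking the walk-level conditional CLT the authors have established in their companion work.
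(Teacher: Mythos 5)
Your opening move --- discarding the event $\{\tau_y\leq n\}$ at the price of an error $c\,z\,e^{-y}(1+y)/\sqrt n$ (Lemma \ref{lemma bound tau<=n}) and working inside $\{\tau_y>n\}$ --- is exactly how the paper starts, and your final reduction of the conditional statement to the joint one via Theorem \ref{Tsurvival001} is also correct. The gap is in your second step. The claimed two-sided ``framing'' of $\{Z_n>0\}$ by events $\{\tau_{-y}>n\}$ is false in one direction: the inclusion $\{Z_n>0\}\subset\{\tau_y>n\}$ holds up to a negligible error, but the walk staying above $-y$ by no means forces survival. Under the changed measure \eqref{changemes001} the population still dies out with probability $1-U(i,y,z)$ bounded away from $0$ (see \eqref{NotUUU001}), and quantitatively
\begin{align*}
\sqrt n\,\Bigl[\bb P_{i,z}(\tau_y>n)-\bb P_{i,z}(Z_n>0,\,\tau_y>n)\Bigr]
\;\longrightarrow\; \frac{2V(i,y)}{\sqrt{2\pi}\,\sigma}\bigl(1-U(i,y,z)\bigr),
\end{align*}
which does not vanish as $y\to\infty$; in fact it diverges, because $V(i,y)\sim y$ while $V(i,y)U(i,y,z)$ stays bounded (Proposition \ref{prop-converg y 001}). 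So replacing $\{Z_n>0\}$ by $\{\tau_y>n\}$ yields only an upper bound whose constant $\frac{2V(i,y)}{\sqrt{2\pi}\sigma}$ blows up as $y\to\infty$, and no lower bound at all. Nor can you ``pin down the overall constant by letting $t\to+\infty$'': that argument presupposes that the limit exists and already has the product form $C\,\Phi^+(t)\bs\nu(j)$ with a $t$-independent $C$, which is precisely the asymptotic independence of the survival event from $(S_n/(\sigma\sqrt n),X_n)$ that has to be proved.

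What is missing is the mechanism that produces the extra factor $U(i,y,z)$ and the factorization. The paper obtains it by replacing $\{Z_n>0\}$ with $\{Z_m>0\}$ for a fixed $m$, controlling the discrepancy through the intermediate time $[\theta n]$ via Lemma \ref{lemmaTL003-001} (which itself rests on the change of measure and the $L^1$-convergence of $q_{n,z}(0)$ in Lemma \ref{lemma q_n conv vers q }); then applying the Markov property at time $m$ and the quantitative conditioned CLT with error bounds, point 2 of Proposition \ref{Prop-CondLimTh}, to the last $n-m$ steps; then using \eqref{changemes001} to convert $\bb E_{i,z}\bigl(V(X_m,y+S_m);Z_m>0,\tau_y>m\bigr)$ into $V(i,y)\,\bb P^+_{i,y,z}(Z_m>0)$, and letting $m\to\infty$ to get $V(i,y)U(i,y,z)$; and finally letting $y\to\infty$, where Proposition \ref{prop-converg y 001} identifies $\frac{2}{\sqrt{2\pi}\sigma}V(i,y)U(i,y,z)\to u(i,z)$ while the $\{\tau_y\leq n\}$ error $c\,z\,e^{-y}(1+y)$ disappears. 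Your sketch invokes the right external ingredients (harmonic function, conditioned CLT, tightness of the minimum on survival), but without the fixed-$m$ truncation, the $\bb P^+$ change of measure and the convergence $q_{m,z}(0)\to q_{\infty,z}(0)$, the argument cannot close; a quantitative version of the conditioned CLT (uniform in the shifted threshold, as in Proposition \ref{Prop-CondLimTh}) is also indispensable, since after conditioning at time $m$ the starting point $y+S_m$ is random.
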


The following assertion is the Yaglom-type limit theorem for $\log Z_n$
jointly with $X_n$.
%\todo{can $t$ be $=0$}
\begin{theorem}
\label{T-Yaglom}
Assume Conditions \ref{primitif}-\ref{cond Kersting2017} and
$k'(0)  = 0.$
Then, 
for any $i \in \bb X$, %$z \in \bb N^*$
$z \in \bb N$,  $z\not=0,$ 
%there exists a probability measure $\mu_{i,z}$ on $\bb R_{+}$ such that for any 
$j \in \bb X$
and $t \geq 0$ ,
%satisfying $\mu_{i,z} (\{t\})=0,$ it holds that 
%a continuity point of the distribution function $\mu_{i,z} ([0,\cdot])$,
$$
 \lim_{n\to\infty} \sqrt{n} \bb P_{i,z} 
\left(  \frac{\log Z_n}{\sigma\sqrt{n}} \leq t, X_n = j, Z_n>0 \right) 
= \Phi^{+} (t) \bs \nu (j) u(i,z)
$$
and
$$
\lim_{n\to\infty} \bb P_{i,z} 
\left(  \frac{\log Z_n}{\sigma\sqrt{n}} \leq t, X_n = j \big| Z_n>0 \right) 
= \Phi^{+} (t) \bs \nu (j).
$$
%Moreover, it holds that $\mu_{i,z} (\{0\})=0$.
\end{theorem}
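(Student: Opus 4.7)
The plan is to deduce Theorem \ref{T-Yaglom} from Theorems \ref{Tsurvival001}, \ref{T-Loi-limite001} and \ref{T-Loi-limite003} through a Slutsky-type argument applied to the elementary decomposition
$$\log Z_n = S_n + W_n, \qquad W_n := \log(Z_n / e^{S_n}) \ \text{on}\ \{Z_n > 0\}.$$
Dividing by $\sigma\sqrt{n}$, the $S_n$ summand yields the Rayleigh contribution via Theorem \ref{T-Loi-limite003}, while the $W_n$ summand should be negligible: under $\bb P_{i,z}(\,\cdot\,|\,Z_n>0)$ the ratio $Z_n/e^{S_n}$ converges in law to the probability measure $\mu_{i,z}$ on $[0,\infty)$ with $\mu_{i,z}(\{0\})=0$ by Theorem \ref{T-Loi-limite001}, so $W_n$ is tight in $\bb R$ and $W_n/\sqrt{n}\to 0$ in conditional probability.

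My first task will be to upgrade this negligibility to a quantitative estimate at the $\sqrt{n}$-weighted scale. I fix $\varepsilon>0$ and continuity points $0<\delta<M$ of $\mu_{i,z}([0,\cdot])$. For $n$ large enough, $e^{-\varepsilon\sigma\sqrt{n}}<\delta$ and $e^{\varepsilon\sigma\sqrt{n}}>M$, hence
\begin{align*}
\sqrt{n}\,\bb P_{i,z}(|W_n|>\varepsilon\sigma\sqrt{n},\,X_n=j,\,Z_n>0)
&\leq \sqrt{n}\,\bb P_{i,z}(0<Z_n/e^{S_n}\leq\delta,\,X_n=j,\,Z_n>0) \\
&\quad + \sqrt{n}\,\bb P_{i,z}(Z_n/e^{S_n}\geq M,\,X_n=j,\,Z_n>0).
\end{align*}
By Theorem \ref{T-Loi-limite001} the right-hand side tends to $\bigl[\mu_{i,z}([0,\delta])+\mu_{i,z}([M,\infty))\bigr]\bs \nu(j) u(i,z)$, which is arbitrarily small for $\delta$ small and $M$ large, since $\mu_{i,z}$ is a probability measure with no atom at $0$. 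Thus $\sqrt{n}\,\bb P_{i,z}(|W_n|>\varepsilon\sigma\sqrt{n},X_n=j,Z_n>0)\to 0$.

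My second task is the Slutsky-type sandwich. Writing $L_n(s):=\sqrt{n}\,\bb P_{i,z}(\log Z_n\leq s\sigma\sqrt{n},X_n=j,Z_n>0)$, the set inclusions
\begin{equation*}
\{S_n\leq (t-\varepsilon)\sigma\sqrt{n}\}\setminus\{|W_n|>\varepsilon\sigma\sqrt{n}\} \subset \{\log Z_n\leq t\sigma\sqrt{n}\} \subset \{S_n\leq(t+\varepsilon)\sigma\sqrt{n}\}\cup\{|W_n|>\varepsilon\sigma\sqrt{n}\},
\end{equation*}
intersected with $\{X_n=j,Z_n>0\}$, combined with Theorem \ref{T-Loi-limite003} and the first task, deliver
\begin{equation*}
\Phi^{+}(t-\varepsilon)\bs \nu(j) u(i,z) \leq \liminf_{n\to\infty} L_n(t) \leq \limsup_{n\to\infty} L_n(t) \leq \Phi^{+}(t+\varepsilon)\bs \nu(j) u(i,z).
\end{equation*}
Sending $\varepsilon\downarrow 0$ and invoking the continuity of $\Phi^{+}$ yields the first asserted limit. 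The second (conditional) limit is obtained by dividing through by $\sqrt{n}\,\bb P_{i,z}(Z_n>0)=\sum_{j'\in\bb X}\sqrt{n}\,\bb P_{i,z}(Z_n>0,X_n=j')\to u(i,z)$, the latter being immediate from summing Theorem \ref{Tsurvival001} over $j'\in\bb X$ and using $\bs \nu(\bb X)=1$.

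All the substantive inputs have already been supplied by the preceding theorems; the argument above is essentially bookkeeping. The only point requiring attention will be that the truncation thresholds $e^{\pm\varepsilon\sigma\sqrt{n}}$ depend on $n$, which is why the argument has to go through fixed continuity points $\delta$ and $M$ of $\mu_{i,z}$ and then exploit $\mu_{i,z}(\{0\})=0$ and $\mu_{i,z}([0,\infty))=1$. Beyond this minor subtlety I do not foresee any serious obstacle.
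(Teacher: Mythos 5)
Your proposal is correct and follows essentially the same route as the paper: both decompose $\log Z_n = S_n + \log(Z_n/e^{S_n})$, use Theorem \ref{T-Loi-limite001} together with $\mu_{i,z}(\{0\})=0$ (truncating at a fixed level, $A$ in the paper, your $\delta,M$) to show $\sqrt{n}\,\bb P_{i,z}(|\log(Z_n/e^{S_n})|\geq\varepsilon\sigma\sqrt{n}, X_n=j, Z_n>0)\to 0$, and then conclude by a Slutsky-type sandwich with Theorem \ref{T-Loi-limite003} and Theorem \ref{Tsurvival001} for the conditional version. Your write-up merely makes the sandwich step and the continuity-point bookkeeping more explicit than the paper does.
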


As mentioned before, in the proofs of the stated results we make use of the previous developments in papers 
%\cite{GLLP_affine_2016,grama_limit_2016-1,GrLauvLePage_2018-SPA,GLLP_CLLT_2017,grama_conditioned_2016}.
\cite{GrLauvLePage_2018-SPA, GLLP_CLLT_2017}.
These studies are based heavily on the existence of the harmonic function and the study of the 
asymptotic of the probability of the exit time for Markov chains which have been performed recently in 
\cite{grama_conditioned_2016, grama_limit_2016-1} and which are recalled in the next section.   
%\cite{GrLauvLePage_2018-SPA, GLLP_CLLT_2017}
%and spectral analysis of the transfer operator $\mathbf P_{z}$ of a Markov chain, for small complex perturbations $z$. 
For recurrent Markov chains alternative approaches based on the renewal arguments are possible.   
The advantage of the harmonic function approach proposed here is that it could be extended 
for more general Markov environments which are not recurrent. In particular with these methods one could treat 
multi-type branching processes in random environments.   
%provided the corresponding conditioned limit theorems such chains are available.  

The outline of the paper is as follows. 
In Section \ref{Sec: Preliminary results} we 
give a series of assertions for walks on Markov chains conditioned to stay positive
and prove Theorem \ref{Tsurvival001} for $z>1$. 
In Section \ref{sec:PrepBranching} we state some preparatory results for branching processes.
The proofs of Theorems \ref{T-Loi-limite001}, \ref{T-Loi-limite003} and \ref{T-Yaglom} 
are given in Sections \ref{sec:proofTh1} and \ref{sec: 44444}. 

We end this section by fixing some notation. 
As usual the symbol $c$ will denote positive constants depending on all previously introduced constants. In the same way 
the symbol $c$, enabled with subscripts, will denote positive constants depending only on the indices and all previously introduced constants. 
All these constants will change their values every occurrence. 
By $f \circ g$ we mean the composition of two function $f$ and $g$: $f \circ g (\cdot)=f(g(\cdot))$.
The indicator of an event $A$ is denoted by $\mathbbm 1_A$. 
For any bounded measurable function $f$ on $\bb X$, random variable $X$ in some measurable space $\bb X$ and event $A$, we set by definition  
$$\int_{\bb X}  f(x) \bb P (X \in \dd x, A) = \bb E\left( f(X); A\right) := \bb E \left(f(X) \mathbbm 1_A\right).$$

\section{Facts on Markov walks conditioned to stay positive} \label{Sec: Preliminary results}

\subsection{Conditioned limit theorems}
\label{Sec: MWCP}
We start by formulating two propositions which are consequences of the results in 
\cite{grama_limit_2016-1}, \cite{GrLauvLePage_2018-SPA} and \cite{GLLP_CLLT_2017}. 
%We complement them by some new assertions which will be used in the proofs of the main results of the paper.

Introduce the first time when the Markov walk $\left(y+ S_n \right)_{n\geq 0}$ becomes non-positive:  for any $y \in \bb R$, set
\begin{align}\label{def-tau001}
\tau_y := \inf \left\{ k \geq 1 : y+S_k \leq 0 \right\},
\end{align}
where $\inf \emptyset = 0.$
Conditions \ref{primitif}, \ref{Cond non-latice} and $\bs \nu(\rho) = 0$ ensure that
the stopping time $\tau_y$ is well defined and finite $\bb P_i$-almost surely, for any $i \in \bb X$. 

%The following assertions, deal with the existence of the harmonic function, the limit behaviour of the law of the exit time and of the law of the random walk $y+S_n$ %conditioned to stay positive. 
The following important proposition is a direct consequence of the results in \cite{grama_limit_2016-1} adapted to the case of a finite Markov chain. 
It proves the existence of the harmonic function related to Markov walk $\left(y+ S_n \right)_{n\geq 0}$
and states some of its properties to be used in the proofs of the main results of the paper.  
\begin{proposition} \label{PropF-hamon001}
%[Preliminary results, part I] 
Assume Conditions \ref{primitif}, \ref{Cond non-latice} and $k'(0) %= \bs \nu(\rho) 
= 0$.
 %{sable002}
There exists a non-negative function $V$ on $\bb X \times \bb R$ such that
\begin{enumerate}[ref=\arabic*, leftmargin=*, label=\arabic*.]
	\item %\label{sable001} 
	For any $(i,y) \in \bb X \times \bb R$ and $n \geq 1$,
	\[
	\bb E_i \left( V \left( X_n, y+S_n \right) \,;\, \tau_y > n \right) = V(i,y).
	\]
	\item %\label{sable002} 
	For any $i\in \bb X$, the function $V(i,\cdot)$ is non-decreasing and for any $(i,y) \in \bb X \times \bb R$,
	\[
	V(i,y) \leq c \left( 1+\max(y,0) \right).
	\]
	\item %\label{sable003} 
	For any $i \in \bb X$, $y > 0$ and $\delta \in (0,1)$,
	\[
	\left( 1- \delta \right)y - c_{\delta} \leq V(i,y) \leq \left(1+\delta \right)y + c_{\delta}.
	\]
\end{enumerate}
\end{proposition}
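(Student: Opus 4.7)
The plan is to construct $V$ via a martingale-type limit and derive the three listed properties from the general conditioned limit theory for Markov walks developed in \cite{grama_limit_2016-1}; since $\bb X$ is finite and the increments $\rho(X_k)$ are bounded, most of the work consists in checking that the hypotheses there are in force and then specializing the conclusions.

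First I would exhibit the candidate harmonic function. Because $\bs\nu(\rho)=k'(0)=0$, the Poisson equation $\theta - \bf P\theta = \rho$ admits a bounded solution $\theta:\bb X\to\bb R$ given by the convergent series $\theta(i) = \sum_{n \geq 0}\bf P^n\rho(i)$, whose convergence is ensured by the spectral gap \eqref{labSpectrGap}. This yields the Tanaka-type decomposition $S_n = M_n + \theta(X_0) - \theta(X_n)$, where $M_n$ is a zero-mean martingale with bounded increments. The natural candidate is then
\[
V(i,y) := \lim_{n\to\infty} \bb E_i\bigl( y + S_n + \theta(X_n) - \theta(i);\, \tau_y > n \bigr),
\]
and the existence and non-negativity of this limit is exactly the content of the construction performed in \cite{grama_limit_2016-1}, whose hypotheses (spectral gap, bounded increments, non-lattice property) hold here by Conditions \ref{primitif}, \ref{Cond non-latice} and the finiteness of $\bb X$.

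Property (1) is immediate from the Markov property applied at time $n$ combined with the defining convergence of $V$, the interchange of limit and expectation being justified by the uniform boundedness of $\theta$ and standard $L^2$ bounds on $S_n \mathbbm 1_{\{\tau_y > n\}}$. For property (2), monotonicity of $V(i,\cdot)$ follows from a direct monotone-coupling observation: for $y \leq y'$ one has $\{\tau_y > n\} \subseteq \{\tau_{y'} > n\}$ and $y+S_n \leq y'+S_n$ on the smaller event, so the ordering persists after taking expectations and passing to the limit. The linear upper bound $V(i,y) \leq c(1+y^+)$ comes from Doob-type inequalities for $M_{\tau_y \wedge n}$ combined with elementary ladder-height estimates.

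Property (3), the sharp two-sided asymptotic $V(i,y) = y + O_\delta(1)$, is the main obstacle. It would follow by expressing $V(i,y)$ as $y$ plus a boundary correction involving the overshoot $S_{\tau_y}$ and the cocycle term $\theta(X_{\tau_y})-\theta(i)$, and then invoking the renewal and ladder-height estimates of \cite{grama_limit_2016-1} to show that this correction is uniformly bounded in $y$. The non-lattice Condition \ref{Cond non-latice} is crucial at this point, since it rules out pathological concentration of the overshoot distribution that would otherwise destroy the uniform linear rate and with it the $\delta$-dependent constants $c_\delta$ of the claim.
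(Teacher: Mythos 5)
Your proposal follows essentially the same route as the paper: the paper offers no independent proof of Proposition \ref{PropF-hamon001} but imports it wholesale from \cite{grama_limit_2016-1}, and the construction you sketch --- solving the Poisson equation $\theta-\mathbf P\theta=\rho$ via the spectral gap \eqref{labSpectrGap}, the resulting martingale decomposition of $S_n$, and defining $V(i,y)$ as the limit of the killed expectations $\bb E_i\left(y+M_n;\tau_y>n\right)$ --- is precisely the construction carried out in that reference, here simplified by the finiteness of $\bb X$ (so $\rho$ and $\theta$ are bounded). Two caveats are worth recording. First, your claim that the non-lattice Condition \ref{Cond non-latice} is ``crucial'' for property 3 is misplaced: in \cite{grama_limit_2016-1} the existence, harmonicity and linear asymptotics $V(i,y)=y(1+o(1))$ of the harmonic function need only the spectral gap, the centering $\bs\nu(\rho)=0$ and moment-type hypotheses; non-latticity plays no role in controlling the overshoot (the analogous renewal-function asymptotics hold even in the lattice case) and is required only for the local limit theorems used elsewhere, e.g.\ Proposition \ref{LLTC}. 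Second, the monotone-coupling argument for property 2 is not quite pointwise: your integrand is $y+M_n=(y+S_n)+\theta$-corrections, and on the extra event $\{\tau_{y'}>n\}\setminus\{\tau_y>n\}$ it may be negative (of order $\|\theta\|_\infty$), so the inequality between the two expectations does not hold term by term; you must add that $\bb P_i\left(\tau_{y'}>n,\,\tau_y\leq n\right)\to 0$ (since $\tau_{y'}<\infty$ a.s.), after which the ordering does survive the passage to the limit. With these adjustments your outline is consistent with the argument the paper relies on.
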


We need the asymptotic of the probability of the event $\{ \tau_y > n\}$
jointly with the state of the Markov chain $(X_n)_{n\geq 1}$.

\begin{proposition} \label{Prop Init002}
Assume Conditions \ref{primitif}, \ref{Cond non-latice} and $k'(0)=0$.
\begin{enumerate}[ref=\arabic*, leftmargin=*, label=\arabic*.]
\item  
For any $(i,y) \in \bb X \times \bb R$ and $j \in \bb X$, we have
\begin{align*}
\lim_{n\to +\infty} \sqrt{n} \bb P_{i} \left( X_n = j \,,\, \tau_y > n \right) = \frac{2V(i,y) \bs \nu (j)}{\sqrt{2\pi} \sigma}. 
\end{align*}
\item For any $(i,y) \in \bb X \times \bb R$ and $n\geq 1$,
	\[
	\bb P_i \left( X_n=j,  \tau_y > n \right) \leq c\frac{ 1 + \max(y,0) }{\sqrt{n}}.
	\]
\end{enumerate}
\end{proposition}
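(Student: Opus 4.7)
The plan is to reduce the joint statement to its scalar counterpart, namely
\[
\sqrt n\,\bb P_i(\tau_y>n)\longrightarrow\frac{2V(i,y)}{\sqrt{2\pi}\,\sigma}\quad\text{as }n\to\infty,
\]
together with the uniform majoration $\bb P_i(\tau_y>n)\leq c(1+\max(y,0))/\sqrt n$; both are available from \cite{grama_limit_2016-1} for finite-state Markov walks centred under $\bs\nu$. Since $\{X_n=j,\tau_y>n\}\subset\{\tau_y>n\}$, Part 2 follows at once. The work lies entirely in Part 1: inserting the factor $\bs\nu(j)$ amounts to decoupling the terminal coordinate $X_n$ from the stay-positive event.

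To carry out this decoupling, I shall fix an auxiliary integer $m\geq 1$ (eventually sent to infinity) and apply the Markov property at time $n-m$:
\[
\bb P_i(X_n=j,\tau_y>n)=\bb E_i\!\left[g_m(X_{n-m},y+S_{n-m})\,;\,\tau_y>n-m\right],
\]
where $g_m(l,y'):=\bb P_l(X_m=j,\tilde\tau_{y'}>m)$ with $\tilde\tau$ the exit time of an independent copy of the walk. Since $\bb X$ is finite, $\rho$ is bounded; set $M:=\max_{i\in\bb X}|\rho(i)|$. On the event $\{y+S_{n-m}>mM\}$ the auxiliary walk cannot descend to zero within the next $m$ steps, so $\tilde\tau_{y'}>m$ holds automatically and
\[
g_m(l,y')=\bf P^m(l,j)=\bs\nu(j)+Q^m(\mathbbm 1_{\{j\}})(l),\qquad \|Q^m(\mathbbm 1_{\{j\}})\|_\infty\leq c_1 e^{-c_2 m},
\]
by the spectral-gap decomposition \eqref{labSpectrGap}. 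Combining this with the trivial bound $0\leq g_m\leq 1$ on the complementary set, and using $\bs\nu(j)\leq 1$, a routine splitting of the expectation yields
\[
\left|\bb P_i(X_n=j,\tau_y>n)-\bs\nu(j)\,\bb P_i(\tau_y>n-m)\right|\leq c_1 e^{-c_2 m}\bb P_i(\tau_y>n-m)+2\,\bb P_i\!\left(\tau_y>n-m,\,y+S_{n-m}\leq mM\right).
\]

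It then remains to multiply by $\sqrt n$, send $n\to\infty$ with $m$ fixed (using $\sqrt n/\sqrt{n-m}\to 1$ together with the scalar asymptotic recalled above), and finally send $m\to\infty$ to kill the exponentially small term. The main obstacle is the last remainder on the right: I must show that, for every $A>0$,
\[
\sqrt n\,\bb P_i\!\left(\tau_y>n,\,y+S_n\leq A\right)\longrightarrow 0\quad\text{as }n\to\infty.
\]
This is precisely the assertion that, under $\bb P_i(\cdot\,|\,\tau_y>n)$, the rescaled height $(y+S_n)/(\sigma\sqrt n)$ converges in law to the Rayleigh distribution $\Phi^{+}$, which is atomless at zero; the corresponding conditioned central limit theorem for Markov walks is established in \cite{grama_limit_2016-1,GLLP_CLLT_2017}. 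Once it is invoked, the $\limsup$ on the left of the previous display is dominated by $c_1 e^{-c_2 m}\cdot 2V(i,y)/(\sqrt{2\pi}\,\sigma)$, which vanishes as $m\to\infty$ and delivers the claimed limit $2V(i,y)\bs\nu(j)/(\sqrt{2\pi}\,\sigma)$.
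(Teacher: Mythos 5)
Your argument is correct, but it is not the route the paper takes: for part 1 the paper offers no self-contained derivation at all, simply citing Lemma 2.11 of \cite{GrLauvLePage_2018-SPA}, and for part 2 it invokes Theorem 2.3(b) of \cite{grama_limit_2016-1} exactly as you do via the inclusion $\{X_n=j,\tau_y>n\}\subset\{\tau_y>n\}$. What you have done is reconstruct the missing proof of part 1 from the scalar ingredients: the Markov property at time $n-m$, the observation that the stay-positive constraint over the last $m$ steps is automatic on $\{y+S_{n-m}>mM\}$ because $\rho$ is bounded, the spectral-gap replacement $\bf P^m(l,j)=\bs\nu(j)+O(e^{-c_2m})$, and the atomlessness of $\Phi^+$ at $0$ to kill $\sqrt{n}\,\bb P_i(\tau_y>n,\,y+S_n\leq A)$. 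This is essentially the same decoupling device the paper itself uses later when proving Proposition \ref{Prop-CondLimTh} (there with a window $k=[n^{1/4}]$ and a quantitative error in $n^{-1/2-\ee}$, because a rate is required in that statement), whereas your fixed window $m$ with iterated limits $n\to\infty$ then $m\to\infty$ suffices here since only the plain limit is asserted; your version is more elementary, the paper's citation is shorter and its Proposition \ref{Prop-CondLimTh} variant is stronger. One small point to make explicit in a write-up: the conditioned CLT you invoke for the remainder is stated for $(i,y)\in\supp(V)$; when $V(i,y)=0$ the scalar bound $\sqrt{n}\,\bb P_i(\tau_y>n)\to 0$ already annihilates every term, so both sides of the claimed limit vanish and the argument closes in that degenerate case as well.
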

For a proof of the first assertion of Proposition \ref{Prop Init002}, see Lemma 2.11 in \cite{GrLauvLePage_2018-SPA}.
The second is deduced from the point (b) of Theorem 2.3 of \cite{grama_limit_2016-1}.

Denote by $\supp(V) = \left\{ (i,y) \in \bb X \times \bb R : \right.$ $\left. V(i,y) > 0 \right\}$ the support of the function $V$. 
By the point 3 of Theorem \ref{PropF-hamon001}, the harmonic function $V$ satisfies the following property: 
for any $i\in \mathbb X$ there exist $y_i\geq 0$ such that $(i,y)\in \supp V$, for any $y>y_i$.

In addition to the previous two propositions we need the following result, 
which gives the asymptotic behaviour of the conditioned limit law
of the Markov walk $\left(y+ S_n \right)_{n\geq 0}$ jointly with the Markov chain  $\left(X_n \right)_{n\geq 0}$.
It extends Theorem 2.5 of \cite{grama_limit_2016-1} where the asymptotic of  $\frac{y+S_n}{\sigma \sqrt{n}}$ given the event 
$\{\tau_y >n\}$ has been considered.
\begin{proposition}%[Preliminary results, part III]
\label{Prop-CondLimTh}
Assume Conditions \ref{primitif}, \ref{Cond non-latice} and $k'(0)=0$. % $\bs \nu(\rho) = 0$.
\begin{enumerate}[ref=\arabic*, leftmargin=*, label=\arabic*.]
\item \label{racine001} For any $(i,y) \in \supp(V)$ and $t\geq 0$,
\begin{align*}
	\bb P_i \left( \sachant{\frac{y+S_n}{\sigma \sqrt{n}} \leq t, X_n=j }{\tau_y >n} \right) \underset{n\to+\infty}{\longrightarrow} 
	 \Phi^+(t) \bs \nu (j).
\end{align*}
	%where we recall that $\Phi^+(t) = 1-\e^{-\frac{t^2}{2}}$ is the Rayleigh distribution function.
\item \label{racine002} There exists $\ee_0 >0$ such that, for any $\ee \in (0,\ee_0)$, $n\geq 1$, $t_0 > 0$, $t\in[0,t_0]$ and $(i,y) \in \bb X \times \bb R$,
\begin{align*}
	&\abs{ \bb P_i \left(\frac{y+S_n}{\sqrt{n} \sigma} \leq t, X_n=j, \tau_y > n \right) - \frac{2V(i,y)}{\sqrt{2\pi n}\sigma} \Phi^+(t) \bs \nu (j) } \\
	&\hskip6cm \leq c_{\ee,t_0} \frac{\left( 1+\max(y,0)^2 \right)}{n^{1/2+\ee}}.
\end{align*}
\end{enumerate}
\end{proposition}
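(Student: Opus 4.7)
My plan is to prove both parts of the proposition by a single strategy: split time at $n-m$ for an intermediate scale $1 \ll m \ll n$, apply the Markov property of $(X_n,S_n)$ to factor the probability into a coarse piece of length $n-m$ and a short piece of length $m$, and then combine three ingredients---the marginal conditioned CLT (Theorem 2.5 of \cite{grama_limit_2016-1}), the joint asymptotics of $(X_n,\tau_y)$ from Proposition \ref{Prop Init002}, and the spectral gap \eqref{labSpectrGap}.

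For part \ref{racine001}, fix a continuity point $t\geq 0$ of $\Phi^+$ and take $m = \lfloor\sqrt n\rfloor$. The Markov property at time $n-m$ gives
$$
A_n := \bb P_i\bigl(\tfrac{y+S_n}{\sigma\sqrt n}\leq t,\, X_n=j,\, \tau_y>n\bigr) = \bb E_i\bigl[\mathbbm 1_{\tau_y>n-m}\, q_n(X_{n-m},\, y+S_{n-m})\bigr],
$$
where $q_n(i',y') := \bb P_{i'}\bigl(\tfrac{y'+S_m}{\sigma\sqrt n}\leq t,\, X_m=j,\, \tau_{y'}>m\bigr)$. The core approximation is
$$
q_n(i',y') \approx \bs\nu(j)\, \mathbbm 1\{y'\leq \sigma t\sqrt n\}\, \bb P_{i'}(\tau_{y'}>m),
$$
justified by (a) $|S_m|/\sqrt n = O(\sqrt{m/n}) \to 0$, shifting the indicator's boundary by an amount that is negligible at continuity points of $\Phi^+$, and (b) the mixing $\bb P_{i'}(X_m=j\mid\tau_{y'}>m)\to\bs\nu(j)$, which I obtain by splitting $\{\tau_{y'}>m\}$ at a sub-intermediate time $m-k$, invoking Proposition \ref{Prop Init002} and the decay $\|Q^k\|_\infty\leq c_1 e^{-c_2 k}$. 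Substituting and using the Markov property to fold the factor $\bb P_{i'}(\tau_{y'}>m)$ back in yields
$$
A_n = \bs\nu(j)\, \bb P_i\bigl(\tfrac{y+S_n}{\sigma\sqrt n}\leq t,\, \tau_y>n\bigr) + o(n^{-1/2}).
$$
Theorem 2.5 of \cite{grama_limit_2016-1} together with Proposition \ref{Prop Init002} (summed over $j$ to produce the denominator) then closes assertion \ref{racine001}.

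For part \ref{racine002}, I rerun the same argument with every estimate quantified. Choosing $m=\lfloor n^{1-2\ee}\rfloor$ balances the two error regimes: the mixing error decays super-polynomially in $m$ and is negligible, while the boundary-shift error in (a) is controlled using a quantitative conditioned local limit theorem from \cite{GLLP_CLLT_2017} whose remainder inherits a factor $1+\max(y,0)$ through Proposition \ref{Prop Init002}.2. The square in $(1+\max(y,0))^2$ arises because the uniform exit-probability bound of Proposition \ref{Prop Init002}.2 is applied on both segments---once on the coarse stretch of length $n-m$ from initial height $y$, and once on the short stretch of length $m$ from height $y'=y+S_{n-m}$, whose conditional magnitude on $\{\tau_y>n-m\}$ contributes a further factor comparable to $1+\max(y,0)$. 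The main obstacle I anticipate is uniformity in $t\in[0,t_0]$ together with a clean matching of the exponent $1/2+\ee$: tuning $m$ and $\ee$ so that the quantitative LLT remainder, the boundary-shift term and the mixing tail all align with the claimed rate $n^{-1/2-\ee}$ requires careful bookkeeping but no conceptual input beyond what is already in the scheme above.
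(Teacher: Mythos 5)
Your overall strategy coincides with the paper's: cut the trajectory near time $n$, decouple the event $\{X_n=j\}$ by the exponential mixing \eqref{EQ:exprate001}, and feed the long stretch into the conditioned CLT of \cite{grama_limit_2016-1} (the paper cuts at $n-k$ with $k=[n^{1/4}]$). However, two of your key steps contain genuine gaps. First, the boundary-shift step (a): with $m=\lfloor\sqrt n\rfloor$ (and a fortiori with $m=\lfloor n^{1-2\ee}\rfloor$ in part 2) the only bound that comes for free is the deterministic one $\abs{S_n-S_{n-m}}\leq m\norm{\rho}_{\infty}$, which shifts the threshold by $O(m/\sqrt n)$, i.e.\ by $O(1)$ or worse — not negligible. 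Your claim $\abs{S_m}/\sqrt n=O(\sqrt{m/n})$ is only a typical-size statement; to exploit it you would need a concentration inequality for the Markov walk on the short window \emph{and} an anti-concentration (local) estimate for $y+S_{n-m}$ near the level $\sigma t\sqrt n$ on $\{\tau_y>n-m\}$, neither of which you provide. The paper avoids all of this by choosing the window of length $n^{1/4}=o(\sqrt n)$, so that the crude bound plus smoothness of $\Phi^+$ suffices; the same repair would work for you.

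Second, the mixing step (b) and the quantitative part. The claim $\bb P_{i'}(X_m=j\mid\tau_{y'}>m)\to\bs\nu(j)$ must hold uniformly in the random height $y'=y+S_{n-m}$, which on $\{\tau_y>n-m\}$ is typically of order $\sqrt n$; Proposition \ref{Prop Init002} gives asymptotics only for fixed $(i',y')$, and your sub-split reduces the claim to bounding $\bb P_{i'}(m-k<\tau_{y'}\leq m)$ against $\bb P_{i'}(\tau_{y'}>m)$ uniformly in $y'$ — a statement of the same nature as the proposition you are proving. The paper instead applies the bound $\bf P^k(X_{n-k},j)\leq\bs\nu(j)+c_1\e^{-c_2k}$ pointwise inside the expectation, keeping the event $\{\tau_y>n-k\}$, and then controls the leftover event $\{n-k<\tau_y\leq n\}$ via the rate version of the conditioned CLT (Theorem 2.5 and Remark 2.10 of \cite{grama_limit_2016-1}) together with $\Phi^+(0)=0$; this step is missing from your argument (it is hidden inside the unproven claim (b)). Finally, your explanation of the factor $\left(1+\max(y,0)\right)^2$ is not correct: the conditional magnitude of $y+S_{n-m}$ given $\tau_y>n-m$ is of order $\sqrt n$, not $1+\max(y,0)$. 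In the paper the square comes directly from the error term of Remark 2.10 of \cite{grama_limit_2016-1}, combined with $V(i,y)\leq c\left(1+\max(y,0)\right)$ from Proposition \ref{PropF-hamon001}; the local limit theorem of \cite{GLLP_CLLT_2017} by itself cannot produce the main term $\frac{2V(i,y)}{\sqrt{2\pi n}\sigma}\Phi^+(t)\bs\nu(j)$ with the rate $n^{-1/2-\ee}$ in the way you describe.
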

\begin{proof}
It is enough to prove the point 2 of the proposition.
It will be derived from the corresponding result in Theorem 2.5 of \cite{grama_limit_2016-1}. 
We establish first an upper bound. 
Let $k=[n^{1/4}]$ and $\| \rho \|_{\infty}=\max_{i\in \mathbb X } | \rho(i) |$.  
Since 
$$
S_n=S_{n-k}+ \sum_{i=n-k+1}^{n} \rho(X_i),
$$
we have
\begin{align} \label{ProofPr23-001}
&\bb P_i \left(\frac{y+S_n}{\sqrt{n} \sigma} \leq t, X_n=j, \tau_y > n \right)  \nonumber \\
&\leq
\bb P_i \left(\frac{y+S_{n-k}}{\sqrt{n} \sigma} \leq t + \frac{k}{\sigma\sqrt{n}} \| \rho \|, X_n=j, \tau_y > n-k \right)  \nonumber \\
 &:=I (k,n).
\end{align}
By the Markov property
\begin{align*}
I (k,n)= \bb E_i \left( P^k(X_{n-k},j); \frac{y+S_{n-k}}{\sqrt{n} \sigma} \leq t + \frac{k}{\sigma\sqrt{n}} \| \rho \|, \tau_y > n-k \right). 
\end{align*}
Now using \eqref{EQ:exprate001} and setting $t_{n,k}=\frac{\sqrt{n}}{\sqrt{n-k}}(t + \frac{k}{\sigma\sqrt{n}}) \| \rho \|,$
\begin{align} \label{ProofPr23-002}
I(k,n) \leq  (\bs \nu(j) + c_1\e^{-c_2 k}) 
\bb P_i \left(\frac{y+S_{n-k}}{\sqrt{n-k} \sigma} \leq t_{n,k}, \tau_y > n-k \right). 
\end{align}
%Assume $t_0>0$ is arbitrary. \Phi^+
By Theorem 2.5 and Remark 2.10 of \cite{grama_limit_2016-1}, 
there exists $\varepsilon_0>0$ such that for any $\varepsilon \in (0, \varepsilon_0)$
and $t_0>0$, $n\geq1$, we have, for $t_{n,k}\leq t_0 $, 
\begin{align} \label{ProofPr23-003}
&\bb P_i \left(\frac{y+S_{n-k}}{\sqrt{n-k} \sigma} \leq t_{n,k}, \tau_y > n-k \right) \nonumber  \\
&\leq \frac{2V(i,y)}{\sqrt{2\pi (n-k)}\sigma} \Phi^+(t_{n,k}) %\bs \nu (j)
+ c_{\varepsilon, t_0}\frac{(1+\max\{0,y\})^2   }{ (n-k)^{1/2+\varepsilon/16}}.
\end{align}
Since $| t_{n,k} - t| \leq c_{t_0}\frac{1}{n^{1/4}}$  and $\Phi^+$ is smooth, 
we obtain
\begin{align} \label{ProofPr23-004}
\frac{2V(i,y)}{\sqrt{2\pi (n-k)}\sigma} \Phi^+(t_{n,k}) 
\leq 
\frac{2V(i,y)}{\sqrt{2\pi n}\sigma} \Phi^+(t) + c_{t_0}\frac{(1+\max\{0,y\})^2}{n^{1/2 +1/4}}.
\end{align}
From \eqref{ProofPr23-001}, \eqref{ProofPr23-002}, \eqref{ProofPr23-003} and \eqref{ProofPr23-004} it follows that
\begin{align} \label{ProofPr23-005}
\bb P_i &\left(\frac{y+S_n}{\sqrt{n} \sigma} \leq t, X_n=j, \tau_y > n \right) \nonumber \\
&\leq
 \bs \nu(j) \frac{2V(i,y)}{\sqrt{2\pi n}\sigma} \Phi^+(t) 
%+ c_{t_0}\frac{1+\max\{0,y\}}{n^{1/2 +1/4}}
+ c_{\varepsilon, t_0}\frac{(1+\max\{0,y\})^2   }{ n^{1/2+\varepsilon/16}}.
\end{align}

Now we shall establish a lower bound. 
With the notation introduced above, we have
\begin{align} \label{ProofPr23-201}
&\bb P_i \left(\frac{y+S_n}{\sqrt{n} \sigma} \leq t, X_n=j, \tau_y > n \right) \nonumber \\
&\geq \bb P_i \left(\frac{y+S_{n-k}}{\sqrt{n} \sigma} \leq t - \frac{k}{\sigma\sqrt{n}} \| \rho \|, X_n=j, \tau_y > n-k \right) \nonumber \\
& -\bb P_i \left( %\frac{y+S_{n-k}}{\sqrt{n} \sigma} \leq t, 
n-k < \tau_y \leq n \right) \nonumber  \\
&:=I_1 (k,n) - I_2(k,n). 
\end{align}
As in the proof of \eqref{ProofPr23-005}, we establish the lower bound
\begin{align} \label{ProofPr23-202}
 I_1(k,n) 
%\bb P_i &\left(\frac{y+S_n}{\sqrt{n} \sigma} \leq t, X_n=j, \tau_y > n \right) \nonumber \\
\geq
 \bs \nu(j) \frac{2V(i,y)}{\sqrt{2\pi n}\sigma} \Phi^+(t) 
%+ c_{t_0}\frac{1+\max\{0,y\}}{n^{1/2 +1/4}}
- c_{\varepsilon, t_0}\frac{(1+\max\{0,y\})^2   }{ n^{1/2+\varepsilon/16}}.
\end{align}
Note that $0 \geq  \min_{n-k  < i \leq n}  \{  y+S_i  \} \geq y+S_{n-k} - k \|  \rho \|_{\infty}$, on the set 
$\{ n-k < \tau_y \leq n \}$. Set $t_{n,k}=\frac{k \|  \rho \|_{\infty} }{\sigma\sqrt{n-k}}$. Then, 
\begin{align} \label{ProofPr23-203}
I_2(k,n) &= \bb P_i \left( n-k < \tau_y \leq n \right) \nonumber\\
&\leq \bb P_i \left( \frac{y+S_{n-k}}{\sigma\sqrt{n-k}} \leq t_{n,k}, % \frac{k \|  \rho \|_{\infty} }{\sigma\sqrt{n-k}}, 
n-k < \tau_y \leq n \right) \nonumber\\
&\leq \bb P_i \left( \frac{y+S_{n-k}}{\sigma\sqrt{n-k}} \leq t_{n,k}, %\frac{k \|  \rho \|_{\infty} }{\sigma\sqrt{n-k}}, 
\tau_y \geq n -k \right).
\end{align}
Again by Theorem 2.5 and Remark 2.10 of \cite{grama_limit_2016-1}, 
there exists $\varepsilon_0>0$ such that for any $\varepsilon \in (0, \varepsilon_0)$
and $t_0>0$, $n\geq1$, we have, for $t_{n,k}\leq t_0 $, 
\begin{align} \label{ProofPr23-204}
&\bb P_i \left(\frac{y+S_{n-k}}{\sqrt{n-k} \sigma} \leq t_{n,k}, \tau_y > n-k \right) \nonumber  \\
&\leq \frac{2V(i,y)}{\sqrt{2\pi (n-k)}\sigma} \Phi^+(t_{n,k}) %\bs \nu (j)
+ c_{\varepsilon, t_0}\frac{(1+\max\{0,y\})^2   }{ (n-k)^{1/2+\varepsilon/16}}.
\end{align}
Since $| t_{n,k} | \leq c_{t_0}\frac{1}{n^{1/4}}$ and $\Phi^+(0)=0$, we obtain
\begin{align} \label{ProofPr23-205}
\frac{2V(i,y)}{\sqrt{2\pi (n-k)}\sigma} \Phi^+(t_{n,k}) 
&\leq 
\frac{2V(i,y)}{\sqrt{2\pi n}\sigma} \Phi^+(0) + c_{t_0}\frac{(1+\max\{0,y\})^2}{n^{1/2 +1/4}} \nonumber  \\
&=c_{t_0}\frac{(1+\max\{0,y\})^2}{n^{1/2 +1/4}}.
\end{align}
From \eqref{ProofPr23-203}, \eqref{ProofPr23-204} and \eqref{ProofPr23-205}, we deduce that 
\begin{align}\label{ProofPr23-206}
I_2(k,n) \leq c_{\varepsilon, t_0}\frac{(1+\max\{0,y\})^2   }{ n^{1/2+\varepsilon/16}}.
\end{align}
Using \eqref{ProofPr23-201}, \eqref{ProofPr23-202} and \eqref{ProofPr23-206}, one gets
\begin{align*} %\label{ProofPr23-207}
\bb P_i &\left(\frac{y+S_n}{\sqrt{n} \sigma} \leq t, X_n=j, \tau_y > n \right) \\
&\geq
 \bs \nu(j) \frac{2V(i,y)}{\sqrt{2\pi n}\sigma} \Phi^+(t) 
%+ c_{t_0}\frac{1+\max\{0,y\}}{n^{1/2 +1/4}}
- c_{\varepsilon, t_0}\frac{(1+\max\{0,y\})^2   }{ n^{1/2+\varepsilon/16}},
\end{align*}
which together with \eqref{ProofPr23-005} end the proof of the point 2 of the proposition. The point 1 follows from the point 2.
\end{proof}

We need the following estimation, % in the local limit theorem, 
whose proof can be found in 
\cite{GLLP_CLLT_2017}. 
  
\begin{proposition} \label{LLTC}
Assume Conditions \ref{primitif}, \ref{Cond non-latice} and $k'(0)=0$. 
Then there exists $c > 0$ such that for any $a>0$, non-negative function $\psi \in \scr C(\mathbb X)$, $y \in \bb R$, $t \geq 0$ and $n \geq 1$,
\begin{align*}
\sup_{i\in \bb X} \bb E_{i} &\left( \psi \left( X_{n} \right) \,;\, y+S_{n} \in [t,t+a] \,,\, \tau_y > n \right) \\
&\leq \frac{c \norm{\psi}_{\infty}}{n^{3/2}} \left( 1+a^3 \right)\left( 1+t \right)\left( 1+\max(y,0) \right).
\end{align*}
\end{proposition}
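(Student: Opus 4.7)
The plan is to combine a time-splitting Markov-property decomposition with a local limit theorem for finite Markov walks (Section~4 of \cite{GLLP_CLLT_2017}) and the harmonic-function estimates of Proposition \ref{Prop Init002}. The target upper bound has the product shape $\|\psi\|_{\infty} V(i,y) V^{*}(j,t) (1+a^3)/n^{3/2}$, typical of local limit theorems for walks conditioned to stay positive, where $V^{*}$ denotes the harmonic function associated with the time-reversed Markov walk. Since Proposition \ref{PropF-hamon001} point 2 gives $V(i,y)\leq c(1+\max(y,0))$, and the analogous statement applied to the reversed chain gives $V^{*}(j,t)\leq c(1+t)$, such a product bound would immediately yield the claim.

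First I would set $n_1=\lfloor n/2\rfloor$ and $n_2 = n-n_1$, and apply the Markov property at time $n_1$:
\begin{align*}
&\bb E_i\bigl(\psi(X_n);\,y+S_n\in[t,t+a],\,\tau_y>n\bigr) \\
&\qquad = \bb E_i\bigl(G(X_{n_1},y+S_{n_1});\,\tau_y>n_1\bigr),
\end{align*}
with $G(j,y')= \bb E_j\bigl(\psi(X_{n_2});\,y'+S_{n_2}\in[t,t+a],\,\tau_{y'}>n_2\bigr)$. On the second half, I would exploit duality: time-reversing the Markov chain (using the invariant measure $\bs\nu$) converts the event $\{\tau_{y'}>n_2,\,y'+S_{n_2}\in[t,t+a]\}$ into an analogous stay-positive event for the reversed walk started near $t+a$ and ending near $y'$. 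Combining the unconditioned local CLT for $S_{n_2}$ from \cite{GLLP_CLLT_2017} with the harmonic estimate applied to the reversed walk would give
$$G(j,y') \leq \frac{c\|\psi\|_{\infty}(1+a^3)(1+t)(1+\max(y',0))}{n_2^{3/2}}.$$

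Plugging this bound back and estimating
$$\bb E_i\bigl((1+\max(y+S_{n_1},0));\,\tau_y>n_1\bigr)\leq \frac{c(1+\max(y,0))}{\sqrt{n_1}},$$
which follows from $V(i,y)\leq c(1+\max(y,0))$ and Proposition \ref{Prop Init002} point 2 (summed over $j\in\bb X$), produces the factor $n_1^{-1/2}\cdot n_2^{-3/2}\asymp n^{-2}$ —wait, the two halves give only $n^{-2}$, so I actually need to be more careful: the correct way is to absorb the first-half factor $(1+\max(y+S_{n_1},0))$ through an expectation of the \emph{harmonic function} $V(X_{n_1},y+S_{n_1})$ on the event $\tau_y>n_1$, which by harmonicity equals $V(i,y)$ itself. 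Dividing out the resulting $(1+\max(y+S_{n_1},0))$ against $V(X_{n_1},y+S_{n_1})\geq (1-\delta)(y+S_{n_1})-c_\delta$ (point~3 of Proposition \ref{PropF-hamon001}), then recovering the missing $1/\sqrt{n_1}$ from the Gaussian-type weight of conditioned position, finally yields $n^{-3/2}$ together with the prefactor $(1+\max(y,0))(1+t)(1+a^3)$.

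The most delicate step is the $a$-dependence: the local CLT produces a factor linear in $a$ only for $a\leq 1$, and extending this uniformly to large $a$ requires interpolating with the trivial bound $\bb P_j(\tau_{y'}>n_2)\leq c(1+\max(y',0))/\sqrt{n_2}$, the polynomial factor $1+a^3$ arising from integrating the endpoint position against the Gaussian density. A further technical point is setting up the time-reversal correctly---keeping track of the invariant measure $\bs\nu$ when translating between the forward harmonic function $V$ and its reversed analogue $V^{*}$---which is precisely what allows one to invoke the local limit theorem of \cite{GLLP_CLLT_2017} in the required two-sided form.
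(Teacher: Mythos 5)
There is a genuine gap. The paper itself does not reprove Proposition \ref{LLTC}; it is quoted from \cite{GLLP_CLLT_2017}, so a standalone argument must actually produce the $n^{-3/2}$ decay, and your sketch never does. After splitting at $n_1=\lfloor n/2\rfloor$, the whole content of the proposition is pushed into the asserted bound
$G(j,y')\leq c\norm{\psi}_{\infty}(1+a^3)(1+t)(1+\max(y',0))/n_2^{3/2}$,
which is precisely the statement being proved (with $n_2$ in place of $n$). You claim it follows from ``duality plus the unconditioned local CLT plus the harmonic estimate for the reversed walk'', but a single duality step only converts the stay-positive constraint on $[0,n_2]$ into a stay-positive constraint for the dual walk started near $t+a$, which yields one factor of order $(1+t+a)/\sqrt{n_2}$ — not $n_2^{-3/2}$. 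To get the full bound one must decompose the time interval further (typically into three blocks) and multiply three independent small factors: the exit-time bound for the forward walk from $y'$ on the first block ($\lesssim(1+\max(y',0))/\sqrt{n}$, point 2 of Proposition \ref{Prop Init002}), an unconditioned local limit theorem on the middle block ($\lesssim(1+a)/\sqrt{n}$), and, via Lemma \ref{dualityBP}, the exit-time bound for the dual walk started from about $t+a$ on the last block ($\lesssim(1+t+a)/\sqrt{n}$). Your sketch names these ingredients but never assembles them, so the key quantitative step is assumed rather than proved.

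In addition, the displayed intermediate estimate
$\bb E_i\bigl(1+\max(y+S_{n_1},0);\tau_y>n_1\bigr)\leq c(1+\max(y,0))/\sqrt{n_1}$
is false: on $\{\tau_y>n_1\}$ the position is typically of order $\sqrt{n_1}$, so the correct order is $c(1+\max(y,0))$ with no $1/\sqrt{n_1}$ gain (this follows from harmonicity together with points 2 and 3 of Proposition \ref{PropF-hamon001}). Your subsequent ``correction'' — recovering a ``missing $1/\sqrt{n_1}$ from the Gaussian-type weight'' — is not needed and does not correspond to a valid step: with the correct first-half estimate $\bb E_i(1+\max(y+S_{n_1},0);\tau_y>n_1)\leq c(1+\max(y,0))$, the factor $n^{-3/2}$ must come entirely from the second-half bound on $G$, which is exactly the part left unproved. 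The remarks on the $a$-dependence (covering $[t,t+a]$ by unit intervals to produce $1+a^3$) are plausible but likewise not carried out.
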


\subsection{Change of probability measure}

Fix any $(i,y) \in \supp(V)$ and $z\in \mathbb N$. 
The harmonic function $V$ from Proposition \ref{PropF-hamon001}, 
allows us to introduce the probability measure $\bb P_{i,y,z}^+$ on 
$\bb (\mathbb X \times %\mathbb R \times 
\mathbb N )^{\mathbb N}$
and the corresponding expectation $\bb E_{i,y,z}^+$, by the following relation: 
for any $n\geq 1$ and any bounded measurable $g$: $\bb (\mathbb X  \times \mathbb N )^{n} \mapsto \bb R$,
\begin{align}	\label{changemes001}
	&\bb E_{i,y,z}^+ \left( g \left( X_1,Z_1, \dots, X_n,Z_n \right) \right) \nonumber\\
	&:= \frac{1}{V(i,y)} \bb E_{i,z} \big( g\left( X_1,Z_1, \dots, X_n,Z_n \right) \nonumber\\
	&\qquad\qquad\qquad \times V\left( X_n, y+S_n \right) \,;\, \tau_y > n \big).
\end{align}
The fact that the function $V$ is harmonic (by point 1 of Proposition \ref{PropF-hamon001})
ensures the applicability of the Kolmogorov extension theorem and shows that $\bb P_{i,z,y}^+$  
is a probability measure. 
In the same way we define the probability measure $\bb P_{i,y}^+$ 
and the corresponding expectation $\bb E_{i,y}^+$:
for any $(i,y) \in \supp(V)$, $n \geq 1$ and any bounded measurable $g$: $\bb X^n \to \bb R$,
\begin{align} \label{changemes002}
&\bb E_{i,y}^+ \left( g \left( X_1, \dots, X_n \right) \right) \nonumber\\
&:= \frac{1}{V(i,y)} \bb E_{i} \left( g\left( X_1, \dots, X_n \right) V\left( X_n, y+S_n \right) \,;\, \tau_y > n \right).
\end{align}
The relation between the expectations $\bb E_{i,y,z}^+$
and $\bb E_{i,y}^+$ is given by the following identity: 
for any  $n \geq 1$ and 
any bounded measurable $g$: $\bb X^n \to \bb R$, 
\begin{align}	\label{changemes000}
	\bb E_{i,y,z}^+ \left( g \left( X_1, \dots, X_n \right) \right) = \bb E_{i,y}^+ \left( g \left( X_1, \dots, X_n \right) \right). 
\end{align}

With the help of Proposition \ref{LLTC}, we have the following bounds. 
\begin{lemma} \label{lemma sum eS_n }
Assume Conditions \ref{primitif}, \ref{Cond non-latice} and $k'(0)=0$. % $\bs \nu(\rho) = 0$. 
For any $(i,y) \in \supp(V)$, we have, for any $k \geq 1$,
\[
\bb E_{i,y}^+ \left( \e^{-S_k}\right) \leq \frac{c \left( 1+\max(y,0) \right)\e^{y}}{k^{3/2} V(i,y)}.
\]
In particular,
\[
\bb E_{i,y}^+ \left( \sum_{k=0}^{+\infty} \e^{-S_k} \right) \leq \frac{c \left( 1+\max(y,0) \right)\e^{y}}{V(i,y)}.
\]
\end{lemma}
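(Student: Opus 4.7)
The plan is to unfold the change of measure \eqref{changemes002}, apply the affine upper bound on $V$, and then invoke the local-limit-type estimate of Proposition \ref{LLTC} after an integer slicing in the magnitude of the walk.

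Concretely, since $\e^{-S_k}$ depends only on $(X_1,\dots,X_k)$, the defining relation \eqref{changemes002} yields
\begin{equation*}
\bb E_{i,y}^+ (\e^{-S_k}) = \frac{1}{V(i,y)} \bb E_i \left( \e^{-S_k} V(X_k, y+S_k) \,;\, \tau_y > k \right).
\end{equation*}
On the event $\{\tau_y > k\}$ one has $y+S_k > 0$, so by point 2 of Proposition \ref{PropF-hamon001}, $V(X_k, y+S_k) \leq c(1 + y+S_k)$. Setting $W_k := y+S_k$ and noting that $\e^{-S_k} = \e^{y} \e^{-W_k}$, the problem reduces to bounding
\begin{equation*}
J_k := \bb E_i \left( \e^{-W_k}(1+W_k) \,;\, \tau_y > k \right).
\end{equation*}

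To estimate $J_k$, I would decompose along the integer level sets of $W_k$:
\begin{equation*}
J_k = \sum_{j=0}^{+\infty} \bb E_i \left( \e^{-W_k}(1+W_k) \,;\, W_k \in [j,j+1],\, \tau_y > k \right).
\end{equation*}
On the slice $\{W_k \in [j, j+1]\}$ the integrand is at most $\e^{-j}(2+j)$, and Proposition \ref{LLTC} applied with $\psi \equiv 1$, $a = 1$, $t = j$ furnishes
\begin{equation*}
\bb P_i \left( W_k \in [j,j+1],\, \tau_y > k \right) \leq \frac{c(1+j)(1+\max(y,0))}{k^{3/2}}.
\end{equation*}
The resulting geometric-polynomial series $\sum_{j \geq 0} \e^{-j}(2+j)(1+j)$ is summable, so $J_k \leq c(1+\max(y,0))/k^{3/2}$. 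Multiplying by $\e^{y}/V(i,y)$ gives the first inequality. The second inequality then follows by summation over $k$: the tail $\sum_{k \geq 1} k^{-3/2}$ is finite, and the $k=0$ contribution $\bb E_{i,y}^+(1) = 1$ is absorbed into the constant using $V(i,y) \leq c(1+\max(y,0))$ from Proposition \ref{PropF-hamon001}, together with the fact that $\e^y$ is bounded below on $\supp(V)$, which by the comment after Proposition \ref{Prop-CondLimTh} is essentially concentrated in the region $y \geq 0$.

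I do not foresee any real obstacle: the affine bound on $V$, the change-of-measure identity, and the density-type estimate of Proposition \ref{LLTC} are all supplied, and the exponential decay of $\e^{-W_k}$ cleanly absorbs the polynomial factors produced by the slicing. The only mildly delicate bookkeeping is to combine the factor $(1+j)$ from Proposition \ref{LLTC} with the factor $(1+W_k)$ inherited from the affine bound on $V$, yielding a quadratic in $j$ that is still geometrically dominated.
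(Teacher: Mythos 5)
Your argument is correct and is essentially the proof the paper intends (the proof is omitted here with a pointer to \cite{GrLauvLePage_2018-SPA}, where the same combination of the change of measure \eqref{changemes002}, the affine bound on $V$ from Proposition \ref{PropF-hamon001}, and an integer slicing of $y+S_k$ fed into the local estimate of Proposition \ref{LLTC} is used). One small repair: the lower bound on $\e^{y}$ over $\supp(V)$ does not follow from the comment you cite, which only says that $\supp(V)$ contains half-lines $\{y>y_i\}$; the correct reason is that $(i,y)\in\supp(V)$ forces $y\geq -\norm{\rho}_{\infty}$, since for $y<-\norm{\rho}_{\infty}$ one has $\tau_y=1$ $\bb P_i$-a.s.\ and harmonicity then gives $V(i,y)=0$.
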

The proof being similar to that in \cite{GrLauvLePage_2018-SPA} is left to the reader.

%\begin{proof} By \eqref{changemes002}, for any $k \geq 1$,
%\[
%\bb E_{i,y}^+ \left( \e^{-S_k}\right) = \bb E_i \left( \e^{-S_k} \frac{V\left( X_k, y+S_k \right)}{V(i,y)} \,;\, \tau_y > k \right).
%\]
%Using the point 2 % \ref{sable002} 
%of Proposition \ref{PropF-hamon001} %{sable},
%\begin{align*}
%	&\bb E_{i,y}^+ \left( \e^{-S_k}\right) \leq \e^{y} \bb E_i \left( \e^{-(y+S_k)} \frac{c\left( 1+\max \left(0,y+S_k \right) \right)}{V(i,y)} \,;\, \tau_y > k \right) \\
%	&= \e^{y} \sum_{p=0}^{+\infty} \bb E_i 
%	\bigg( \e^{-(y+S_k)} \frac{c\left( 1+\max \left(0,y+S_k \right) \right)}{V(i,y)} \,;\, \\
%	& \qquad\qquad\qquad\qquad\qquad\qquad\qquad   y+S_k \in (p,p+1] \,,\, \tau_y > k \bigg) \\
%	&\leq \e^{y} \sum_{p=0}^{+\infty} \e^{-p} \frac{c( 1+p )}{V(i,y)} \bb P_i \left( y+S_k \in [p,p+1] \,,\, \tau_y > k \right).
%\end{align*}
%Now by Proposition \ref{LLTC},
%\begin{align*}
%	\bb E_{i,y}^+ \left( \e^{-S_k}\right) &\leq \frac{c}{k^{3/2}} \sum_{p=0}^{+\infty} \e^{-p} ( 1+p )^2 \frac{\e^{y}\left( 1+\max(0,y) \right)}{V(i,y)} \\
%	&= \frac{c \left( 1+\max(0,y) \right)\e^{y}}{k^{3/2}V(i,y)}.
%\end{align*}
%This proves the first inequality of the lemma. Summing both sides in $k$ and using the Lebesgue monotone convergence theorem, it proves also the second inequality of the lemma.
%\end{proof}

We need the following statements. 
Let
$
\mathscr F_n = \sigma \{ X_0,Z_0, \ldots, X_n, Z_n  \}
$
and 
$(Y_n)_{n\geq 0}$ a  bounded $(\mathscr F_n)_{n\geq 0}$-adapted sequence.
\begin{lemma} \label{lemmaE+}
Assume Conditions \ref{primitif}-\ref{Cond non-latice} and $k'(0)=0$. % $\bs \nu(\rho) = 0$.
%For any $n \geq 1$, 
For any $k \geq 1$,
%bounded measurable function $g$: $\bb X^m \to \bb C$, 
$(i,y) \in \supp(V)$, $z\in \mathbb N$ 
and $j \in \bb X$,
\[
\lim_{n\to +\infty} \bb E_{i,z} \left( \sachant{ %g\left( X_1, \dots, X_m \right) 
Y_k 
\,;\, X_n = j}{ \tau_y > n } \right) 
= \bb E_{i,y,z}^+ \left( Y_k %g\left( X_1, \dots, X_m \right) 
\right) \bs \nu (j).
\]
\end{lemma}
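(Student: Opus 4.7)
The plan is to decompose the conditional expectation as a ratio,
$$
\bb E_{i,z}\!\left( \left. Y_k \,;\, X_n = j \,\right|\, \tau_y > n \right) = \frac{ \bb E_{i,z}\!\left( Y_k \,;\, X_n = j, \tau_y > n \right) }{ \bb P_{i}( \tau_y > n ) }
$$
(the denominator uses $\bb P_{i,z} = \bb P_i$ since $Z$ is irrelevant on $\{\tau_y > n\}$), and to pin down the precise $n^{-1/2}$ asymptotic of both numerator and denominator so that the ratio gives the stated limit.

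For the numerator, I would first observe that $Y_k \bbm 1_{\tau_y > k}$ is $\scr F_k$-measurable and decompose the event $\{\tau_y > n\} = \{\tau_y > k\} \cap \{y + S_m > 0 \text{ for all } k < m \leq n\}$. Applying the Markov property at time $k$ (using that the conditional law of $(X_{k+m}, S_{k+m} - S_k)_{m \geq 0}$ given $\scr F_k$ is $\bb P_{X_k}$, and that the offspring variables beyond time $k$ are independent of $\scr F_k$) one gets
$$
\bb E_{i,z}\!\left( Y_k ; X_n = j, \tau_y > n \right) = \bb E_{i,z}\!\left( Y_k \bbm 1_{\tau_y > k} \, \varphi_{n-k}(X_k, y+S_k) \right),
$$
where $\varphi_m(x, y') := \bb P_x(X_m = j, \tau_{y'} > m)$. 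Point 1 of Proposition \ref{Prop Init002} gives the pointwise limit $\sqrt{n-k}\,\varphi_{n-k}(x, y') \to 2 V(x, y') \bs \nu(j)/(\sqrt{2\pi}\,\sigma)$, while point 2 provides the uniform bound $\sqrt{n-k}\,\varphi_{n-k}(x, y') \leq c(1 + \max(y',0))$. Since $k$ is fixed, $Y_k$ is bounded, and $|S_k| \leq k \norm{\rho}_{\infty}$ on the finite space $\bb X$, the integrand is dominated by a constant random variable, so dominated convergence together with $\sqrt{n}/\sqrt{n-k} \to 1$ yield
$$
\sqrt{n}\, \bb E_{i,z}\!\left( Y_k ; X_n = j, \tau_y > n \right) \longrightarrow \frac{2 \bs \nu(j)}{\sqrt{2\pi}\,\sigma}\, \bb E_{i,z}\!\left( Y_k V(X_k, y+S_k) ; \tau_y > k \right),
$$
and by the definition \eqref{changemes001} of $\bb E_{i,y,z}^+$ the right-hand expectation equals $V(i,y)\, \bb E_{i,y,z}^+(Y_k)$.

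For the denominator, summing the asymptotic of point 1 of Proposition \ref{Prop Init002} over the finite set $j \in \bb X$ gives $\sqrt{n}\, \bb P_i(\tau_y > n) \to 2 V(i,y)/(\sqrt{2\pi}\,\sigma)$. Taking the ratio eliminates the factors $2 V(i,y)/(\sqrt{2\pi}\,\sigma)$ and yields the claimed limit $\bs \nu(j)\, \bb E_{i,y,z}^+(Y_k)$. I anticipate no serious obstacle: the argument is just the Markov property at the fixed time $k$ followed by Proposition \ref{Prop Init002}, and the only potentially delicate point — interchanging limit and expectation — is immediate here because $k$ is fixed, $Y_k$ is bounded by assumption, and $S_k$ is bounded in absolute value by $k \norm{\rho}_{\infty}$ on the finite state space.
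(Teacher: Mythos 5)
Your proposal is correct and follows essentially the same route as the paper: apply the Markov property at the fixed time $k$, use point 1 of Proposition \ref{Prop Init002} for the asymptotics of $\bb P_x(X_m=j,\tau_{y'}>m)$ and of $\bb P_i(\tau_y>n)$, pass to the limit by dominated convergence (the paper dominates the ratio directly inside one expectation, you treat numerator and denominator separately, which is an immaterial difference), and identify the limit with $V(i,y)\,\bb E_{i,y,z}^+(Y_k)$ via the change of measure \eqref{changemes001}.
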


\begin{proof}
For the sake of brevity, for any $(i,j) \in \bb X^2$, $y \in \bb R$ and $n \geq 1$, set
\[
P_n(i,y,j) := \bb P_i \left( X_n = j \,,\, \tau_y > n \right).
\]
Fix $k \geq 1$. % and let $g$ be a function $\bb X^m \to \bb C$. 
By the point 1 of Proposition \ref{Prop Init002}, it is clear that for any $(i,y) \in \supp(V)$ and $n$ large enough, $\bb P_i \left( \tau_y > n \right) > 0$. 
By the Markov property, for any $j \in \bb X$ 
and $n \geq k+1$ large enough,
\begin{align*}
I_0 &:= \bb E_{i,z} \left( \sachant{ Y_k \,;\, X_n = j}{ \tau_y > n } \right)\\ 
&= \bb E_{i,z} \left( Y_k \frac{P_{n-k} \left( X_k,y+S_k, j \right)}{\bb P_i \left( \tau_y > n \right)} \,;\, \tau_y > m \right).
\end{align*}
Using the point 1 of Proposition \ref{Prop Init002}, by the Lebesgue dominated convergence theorem,
\begin{align*}
	\lim_{n\to+\infty} I_0 &= 
	\bb E_{i,z} \left( Y_k  \frac{V \left( X_k,y+S_m \right)}{V(i,y)} \,;\, \tau_y > k \right) \bs \nu(j) \\
	&= \bb E_{i,y,z}^+ \left( Y_k \right) \bs \nu (j).
\end{align*}
\end{proof}

\begin{lemma}\label{lemma limit 002}
Assume that $(i,y) \in \supp V $ and $z\in \mathbb N$.
For any  bounded $(\mathscr F_n)_{n\geq 0}$-adapted sequence $(Y_n)_{n\geq 0}$
such that $Y_n\to Y_{\infty}$ $\mathbb P_{i,y,z}^{+}$-a.s.,
\begin{align*} 
\limsup_{k\to \infty} \limsup_{n\to \infty} 
\sqrt{n}  
\mathbb E_{i,z} \big( \big|  Y_n-Y_k \big|;  \tau_y > n \big) =0.
\end{align*}
\end{lemma}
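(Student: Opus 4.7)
The plan is to split $\sqrt n \, \mathbb E_{i,z}(|Y_n - Y_k|;\tau_y > n)$ according to whether the walk $y + S_n$ is of typical order $\sqrt n$ or much smaller, and to treat each piece with a different tool from Section~\ref{Sec: MWCP}. Fix $\ee > 0$ and write $\sqrt n \, \mathbb E_{i,z}(|Y_n - Y_k|;\tau_y > n) = A_n(k,\ee) + B_n(k,\ee)$, where $A_n(k,\ee)$ and $B_n(k,\ee)$ denote the contributions of the events $\{\tau_y > n, y+S_n > \ee\sqrt n\}$ and $\{\tau_y > n, y+S_n \leq \ee\sqrt n\}$ respectively.

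For $A_n(k,\ee)$ I would apply point~3 of Proposition~\ref{PropF-hamon001} with $\delta = 1/2$, which gives $V(X_n, y+S_n) \geq (y+S_n)/4$ as soon as $y+S_n$ exceeds an absolute constant. For $n$ large enough that $\ee\sqrt n$ exceeds this constant, on $\{\tau_y > n, y+S_n > \ee\sqrt n\}$ one has $V(X_n, y+S_n) \geq \ee\sqrt n / 4 > 0$, and the change of measure \eqref{changemes001} applied to the bounded $\mathscr F_n$-measurable functional $|Y_n - Y_k|\mathbbm 1\{y+S_n > \ee\sqrt n\}/V(X_n, y+S_n)$ yields
\[
A_n(k,\ee) \leq \frac{4 V(i,y)}{\ee} \, \mathbb E^+_{i,y,z}(|Y_n - Y_k|).
\]
Since $(Y_n)$ is uniformly bounded and $Y_n \to Y_\infty$ $\mathbb P^+_{i,y,z}$-almost surely, dominated convergence gives $\mathbb E^+_{i,y,z}(|Y_n - Y_k|) \to \mathbb E^+_{i,y,z}(|Y_\infty - Y_k|)$ as $n \to \infty$, and this tends to $0$ as $k \to \infty$ by a second application of dominated convergence.

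For $B_n(k,\ee)$, bound $|Y_n - Y_k|$ by $2\, \sup_m \norm{Y_m}_\infty$ to reduce to estimating $\sqrt n \, \mathbb P_i(\tau_y > n, y+S_n \leq \ee\sqrt n)$. Summing point~\ref{racine001} of Proposition~\ref{Prop-CondLimTh} over the terminal state $j \in \bb X$ and combining with point~1 of Proposition~\ref{Prop Init002} gives
\[
\lim_{n\to+\infty} \sqrt n \, \mathbb P_i(\tau_y > n, y+S_n \leq \ee\sqrt n) = \frac{2 V(i,y)}{\sqrt{2\pi}\sigma}\, \Phi^+(\ee/\sigma),
\]
so $\limsup_n B_n(k,\ee) \leq c \, (1 - e^{-\ee^2/(2\sigma^2)})$, uniformly in $k$. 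Combining the two estimates,
\[
\limsup_{k\to\infty} \limsup_{n\to\infty} \sqrt n \, \mathbb E_{i,z}(|Y_n - Y_k|;\tau_y > n) \leq c \, (1 - e^{-\ee^2/(2\sigma^2)}),
\]
and, the left side being independent of $\ee$, letting $\ee \to 0$ concludes.

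The main obstacle is the calibration of the cut-off at scale $\ee\sqrt n$: it has to be large enough that Proposition~\ref{PropF-hamon001} supplies a lower bound on $V(X_n, y+S_n)$ able to absorb the $\sqrt n$ prefactor in the change of measure (so the bound reduces to a quantity of order $\mathbb E^+_{i,y,z}(|Y_n - Y_k|)$, which we control via dominated convergence), yet small enough that the Rayleigh tail $\Phi^+(\ee/\sigma)$ controlling the complementary region vanishes with $\ee$.
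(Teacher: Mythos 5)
Your proof is correct, but it follows a genuinely different route from the paper's. The paper cuts along the \emph{time} axis: it compares the events $\{\tau_y>\theta n\}$ and $\{n<\tau_y\leq \theta n\}$ for $\theta>1$, controls the second via the exact asymptotics of $\sqrt{n}\,\bb P_i(\tau_y>n)$ (point 1 of Proposition \ref{Prop Init002}), and on $\{\tau_y>\theta n\}$ applies the Markov property at time $n$, bounding the remaining survival probability $P_{[(\theta-1)n]}(X_n,y+S_n)$ by $cV(X_n,y+S_n)/\sqrt{(\theta-1)n}$ for $y+S_n>y_0$ (points 2 of Proposition \ref{Prop Init002} and 3 of Proposition \ref{PropF-hamon001}), treating the strip $\{y+S_n\leq y_0\}$ separately via the Rayleigh limit; the error vanishes as $\theta\to1$. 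You instead cut along the \emph{space} axis at level $\ee\sqrt{n}$: on $\{y+S_n>\ee\sqrt{n}\}$ the linear lower bound on $V$ already absorbs the $\sqrt{n}$ prefactor, so the change of measure \eqref{changemes001} plus dominated convergence kills that piece with no Markov-property step at all, while the strip $\{y+S_n\leq\ee\sqrt{n}\}$ is of size $c\,\Phi^+(\ee/\sigma)$ by the conditional CLT and vanishes as $\ee\to0$. Both arguments rest on the same three ingredients (harmonic-function bounds, the change of measure, the Rayleigh asymptotics), but yours is somewhat more economical, dispensing with the conditioning at time $n$ and with the auxiliary horizon $\theta n$; the price is the constant $4V(i,y)/\ee$ blowing up as $\ee\to0$, which is harmless since the iterated limit in $n$ and $k$ is taken before $\ee\to0$. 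One cosmetic point: rather than dividing by $V(X_n,y+S_n)$ inside the functional, it is cleaner to invoke the pointwise inequality $\mathbbm 1\{y+S_n>\ee\sqrt{n}\}\leq 4V(X_n,y+S_n)/(\ee\sqrt{n})$ (valid for $n$ large) and then apply \eqref{changemes001} to $|Y_n-Y_k|$ itself; this avoids any issue where $V$ vanishes.
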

\begin{proof}
Let $k\geq1$ and $\theta>1$. 
Then
\begin{align} \label{eqPPPPP001} 
\mathbb E_{i,z} \big( \big|   Y_n-Y_k \big|;   \tau_y > n \big)   
&= \mathbb E_{i,z} \big(   \big|Y_n-Y_k \big|;   \tau_y > \theta n \big) \nonumber  \\ 
& + \mathbb E_{i,z} \big(   \big|Y_n-Y_k \big|;  n< \tau_y \leq \theta n \big).  
\end{align}
We bound the second therm in the right-hand side of \eqref{eqPPPPP001}:
\begin{align} \label{eqPPPPP002} 
&\mathbb E_{i,z} \big(   \big|Y_n-Y_k \big|;  n< \tau_y \leq \theta n \big)
\leq 
C \mathbb P_{i,z} \big(    n< \tau_y \leq \theta n \big)  
\end{align}
By  point 1 of Proposition \ref{Prop Init002}, we have %\todos{we added a short explanation}
\begin{align}\label{eqPPPPP002a}
&\lim_{n\to \infty} \sqrt{n} \mathbb P_{i,z} \big(    n< \tau_y \leq \theta n \big) \nonumber \\
&= \lim_{n\to \infty} \sqrt{n} \mathbb P_{i,z} \big(    \tau_y >n \big)
- \lim_{n\to \infty} \sqrt{n} \mathbb P_{i,z} \big(   \tau_y > \theta n \big) \nonumber\\
&= \frac{2V(i,y)}{\sqrt{2\pi}\sigma} \Big(1-\frac{1}{\sqrt{\theta}}\Big).
\end{align}
Now we shall prove that
%\todos{Correction made: $\sqrt{n}$ inserted}
\begin{align}\label{eqBBBB001}
\limsup_{k\to \infty} \limsup_{n\to \infty}  \sqrt{n} \mathbb E_{i,z} \big(   \big|Y_n-Y_k \big|;   \tau_y > \theta n \big) =0.
\end{align}
%\todos{In answer to referee's remark we added explanations:}
Recall that $\theta >1$. By the Markov property (conditioning on $\mathscr F_n$), 
\begin{align} \label{eqPPPPP003} 
&\abs{\mathbb E_{i,z} \big(   \big|Y_n-Y_k \big|;   \tau_y > \theta n \big)} \nonumber \\ 
&\qquad = \mathbb E_{i,z} \big(   \abs{Y_n-Y_k} P_{[(\theta-1)n]}(X_n,y+S_n) ;  \tau_y > n \big), 
\end{align}
where we use the notation $P_{n'}(i',y') :=  \mathbb P_{i',y'} \big( \tau_y' > n' \big).$ 
By point 2 of Proposition \ref{Prop Init002} 
and by point 3 of Proposition \ref{PropF-hamon001}, 
there exists $y_0>0$ such that for $i' \in \mathbb X$, $y' > y_0$ and $n'\in\mathbb N$,
\begin{align}\label{eqBBBB002}
P_{n'}(i',y') %:=  \mathbb P_{i',y'} \big( \tau_y' > n' \big)  
\leq  \frac{c}{\sqrt{n'}} \left(1+\max\{0,y'\} \right) \leq c\frac{V(i',y')}{\sqrt{n'}}.
\end{align}
Representing the right-hand side of \eqref{eqPPPPP003} 
as a sum of two terms and using \eqref{eqBBBB002}, gives
\begin{align} \label{eqPPPPP003b} 
&\sqrt{n}\abs{\mathbb E_{i,z} \big(   \big|Y_n-Y_k \big|;   \tau_y > \theta n \big)} \nonumber \\ 
%&= \sqrt{n}\mathbb E_{i,z} \big(   \abs{Y_n-Y_k} P_{[(\theta-1)n]}(X_n,y+S_n) ;  \tau_y > n \big) \nonumber  \\ 
& = \sqrt{n}\mathbb E_{i,z} \big(   \abs{Y_n-Y_k} P_{[(\theta-1)n]}(X_n,y+S_n) ; y+S_n  \leq y_0,  \tau_y > n \big) \nonumber  \\ 
& \quad +  \sqrt{n}\mathbb E_{i,z} \big(   \abs{Y_n-Y_k} P_{[(\theta-1)n]}(X_n,y+S_n) ; y+S_n  > y_0,  \tau_y > n \big) \nonumber  \\ 
&\leq 
c \sqrt{n}\mathbb P_{i} \big( y+S_n  \leq y_0, \tau_y > n \big) \nonumber  \\
&+ \frac{c}{\sqrt{\theta-1}} 
\mathbb E_{i,z} \big(   \abs{Y_n-Y_k} V(X_n, y+S_n);  \tau_y > n \big).
\end{align}
Using point 1 of Proposition \ref{Prop-CondLimTh} and point 1 of Proposition \ref{Prop Init002}, we have  
\begin{align} \label{eqPPPPP004}
\lim_{n\to\infty}\sqrt{n}\mathbb P_{i} \big( y+S_n  \leq y_0, \tau_y > n \big)=0.
\end{align}
By the change of measure formula \eqref{changemes001},
\begin{align} \label{eqPPPPP005}
&\mathbb E_{i,z} \big(   \abs{Y_n-Y_k} V(X_n, y+S_n);   \tau_y > n \big) \nonumber  \\
&\qquad\qquad = V(i,y) \mathbb E_{i,y,z}^{+} \big(   \abs{Y_n-Y_k} \big).
\end{align}
Letting first $n\to\infty$ and then $k\to\infty$, by the Lebesgue dominated convergence theorem,
\begin{align}\label{eqPPPPP006}
\limsup_{k\to \infty} \limsup_{n\to \infty} \mathbb E_{i,y,z}^{+} \big(   \abs{Y_n-Y_k} \big) =0.
\end{align}
From \eqref{eqPPPPP003b}-\eqref{eqPPPPP006} 
we deduce \eqref{eqBBBB001}.
Now \eqref{eqPPPPP001}-\eqref{eqBBBB001} imply that, for any $\theta>1$,
\begin{align*}
\limsup_{k\to \infty} \limsup_{n\to \infty} \sqrt{n} \mathbb E_{i,z} \big( \big|   Y_n-Y_k \big|;   \tau_y > n \big)  
\leq \frac{2V(i,y)}{\sqrt{2\pi}\sigma} \Big(1-\frac{1}{\sqrt{\theta}}\Big).
\end{align*}
Since $\theta $ can be taken arbitrarily close to $1$, we conclude the claim of the lemma. 
\end{proof}

The next assertion is an easy consequence of Lemmata \ref{lemmaE+} and \ref{lemma limit 002}.
\begin{lemma}\label{lemma limit 003}
Assume that $(i,y) \in \supp V $ and $z\in \mathbb N$. For any  bounded $(\mathscr F_n)_{n\geq 0}$-adapted sequence $(Y_n)_{n\geq 0}$
such that $Y_n\to Y_{\infty}$ $\mathbb P_{i,y,z}^{+}$-a.s.,
\begin{align*} 
\lim_{n\to +\infty}  \mathbb E_{i,z} \big(   Y_n;  X_n=j \big|  \tau_y > n \big) 
= \mathbb E_{i,y,z}^{+} \big(   Y_{\infty}\big) \bs \nu (j) .
\end{align*}
\end{lemma}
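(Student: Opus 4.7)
The plan is to prove the lemma by a standard $k$-then-$n$ truncation, using Lemma \ref{lemmaE+} to handle the "head" term $Y_k$ (for fixed $k$) and Lemma \ref{lemma limit 002} together with the sharp asymptotic for $\mathbb P_i(\tau_y>n)$ to control the "tail" term $Y_n-Y_k$.

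First I would split, for any fixed $k\geq 1$ and $n\geq k$,
\begin{align*}
\mathbb E_{i,z}\bigl(Y_n; X_n=j \,\big|\, \tau_y>n\bigr)
&= \mathbb E_{i,z}\bigl(Y_k; X_n=j \,\big|\, \tau_y>n\bigr) \\
&\quad + \mathbb E_{i,z}\bigl(Y_n-Y_k; X_n=j \,\big|\, \tau_y>n\bigr).
\end{align*}
By Lemma \ref{lemmaE+}, for each fixed $k$ the first summand converges as $n\to\infty$ to $\mathbb E_{i,y,z}^{+}(Y_k)\bs\nu(j)$. Since $(Y_k)$ is bounded and converges $\mathbb P_{i,y,z}^{+}$-a.s.\ to $Y_\infty$, dominated convergence under $\mathbb P_{i,y,z}^{+}$ gives $\mathbb E_{i,y,z}^{+}(Y_k)\to\mathbb E_{i,y,z}^{+}(Y_\infty)$ as $k\to\infty$. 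Thus the head contribution yields exactly the target limit $\mathbb E_{i,y,z}^{+}(Y_\infty)\bs\nu(j)$ after letting $n\to\infty$ and then $k\to\infty$.

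It remains to show that the tail term vanishes in the same iterated limit. Bound it trivially by
\[
\bigl|\mathbb E_{i,z}\bigl(Y_n-Y_k; X_n=j \,\big|\, \tau_y>n\bigr)\bigr|
\leq \frac{\mathbb E_{i,z}\bigl(|Y_n-Y_k|; \tau_y>n\bigr)}{\mathbb P_i(\tau_y>n)}
= \frac{\sqrt n\,\mathbb E_{i,z}\bigl(|Y_n-Y_k|; \tau_y>n\bigr)}{\sqrt n\,\mathbb P_i(\tau_y>n)}.
\]
Since $(i,y)\in\supp V$, point 1 of Proposition \ref{Prop Init002} (summed over $j$) gives $\sqrt n\,\mathbb P_i(\tau_y>n)\to 2V(i,y)/(\sqrt{2\pi}\sigma)>0$, so the denominator is bounded below by a positive constant for all $n$ large. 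Lemma \ref{lemma limit 002} ensures that the numerator satisfies
\[
\limsup_{k\to\infty}\limsup_{n\to\infty}\sqrt n\,\mathbb E_{i,z}\bigl(|Y_n-Y_k|; \tau_y>n\bigr)=0,
\]
and therefore the tail contribution vanishes in the iterated limit. Combining the two, we conclude.

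The only conceptual subtlety — not an obstacle but worth noting — is the order of the limits: one must take $n\to\infty$ before $k\to\infty$ in both pieces, which is consistent because the head term has a clean $n$-limit for each fixed $k$, and the tail estimate from Lemma \ref{lemma limit 002} is precisely formulated as a $\limsup_k\limsup_n$. No other step should present difficulty, since the positivity of $V(i,y)$ on $\supp V$ is exactly what prevents the normalization $1/\mathbb P_i(\tau_y>n)$ from blowing up faster than $\sqrt n$.
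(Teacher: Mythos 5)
Your proof is correct and follows essentially the same route as the paper: decompose into the head term $Y_k$ (handled by Lemma \ref{lemmaE+}) and the tail $Y_n-Y_k$ (killed by Lemma \ref{lemma limit 002}), with the $\sqrt{n}$-asymptotics of $\mathbb P_i(\tau_y>n)$ from Proposition \ref{Prop Init002} justifying the normalization; the paper merely phrases the same argument in terms of $\sqrt{n}$-scaled unconditional expectations rather than conditional ones.
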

\begin{proof}
For any  $n > k \geq1$, we have
\begin{align} \label{FFF001} 
&\lim_{n\to \infty}  \sqrt{n}  \mathbb E_{i,z} \big(   Y_n;  X_n=j,  \tau_y > n \big) \nonumber \\ 
&= \sqrt{n}  \mathbb E_{i,z} \big(   Y_k;  X_n=j,  \tau_y > n \big) 
+ \sqrt{n}  \mathbb E_{i,z} \big(   Y_n-Y_k;  X_n=j,  \tau_y > n \big)  .
\end{align}
By Lemma \ref{lemmaE+}, the first term in the right-hand side of \eqref{FFF001} converges to
$\frac{2V(i,y)}{\sqrt{2\pi}\sigma} \bs \nu (j) \mathbb E_{i,y,z}^{+} Y_{k}$ 
as $n\to\infty$, where 
$ \lim_{k\to \infty} \mathbb E_{i,z,y}^{+} Y_{k} = \mathbb E_{i,y,z}^{+} Y_{\infty}.$ 
By Lemma \ref{lemma limit 002}, the second term in the r.h.s.\ of \eqref{FFF001} vanishes, which completes the proof. 
\end{proof}
%\begin{align}  \label{FFF004}
%\limsup_{k\to \infty} \limsup_{n\to \infty} \big|\sqrt{n}  \mathbb E_{i,z} \big(   Y_n-Y_k;  X_n=j,  \tau_y > n \big) \big| =0.
%\end{align}
%The assertion of the lemma follows from  \eqref{FFF001}-\eqref{FFF004}.
%
%For the first one, taking the limit as $n\to\infty$, by Lemma \ref{lemmaE+}, 
%\begin{align} \label{FFF002}
%\lim_{n\to \infty} \sqrt{n}  \mathbb E_{i,z} \big(   Y_k;  X_n=j,  \tau_y > n \big) 
%= \frac{2V(i,y)}{\sqrt{2\pi}\sigma} \bs \nu (j) \mathbb E_{i,y,z}^{+} Y_{k}, 
%%= \sqrt{n} \nu(j)  \mathbb E_{i,z,y} \big(   Y_k;  \tau_y > n \big) +I(k,n) 
%\end{align} 
%where, taking the limit as $k\to\infty$,
%\begin{align}
%\lim_{k\to \infty} \mathbb E_{i,z,y}^{+} Y_{k} = \mathbb E_{i,y,z}^{+} Y_{\infty}. 
%\end{align}  \label{FFF003}
%For the second term in the r.h.s. of \eqref{FFF001}, by Lemma \ref{lemma limit 002}, we have
%\begin{align}  \label{FFF004}
%\limsup_{k\to \infty} \limsup_{n\to \infty} \big|\sqrt{n}  \mathbb E_{i,z} \big(   Y_n-Y_k;  X_n=j,  \tau_y > n \big) \big| =0.
%\end{align}
%The assertion of the lemma follows from  \eqref{FFF001}-\eqref{FFF004}.
%\end{proof}

%%%%%%%%%%%%%%%%%%%%%%%%%%%%%%%%%%%%%%%%%%%%%%%%%%%%%%%%%%%%%
\subsection{The dual Markov chain} 

Note that the invariant measure $\bs \nu$ is positive on $\bb X$. Therefore 
the dual Markov kernel
\begin{equation}
\label{statueBP} 
\bf P^* \left( i,j \right) = \frac{\bs \nu \left( j \right)}{\bs \nu (i)} \bf P \left( j,i \right), \quad  i,j \in \bb X
\end{equation} 
is well defined.
On an extension of the probability space $(\Omega, \scr F, \bb P)$ we consider
the dual Markov chain $\left( X_n^* \right)_{n\geq 0}$ with values in $\bb X$ and with transition probability $\bf P^*$.
The dual chain $\left( X_n^* \right)_{n\geq 0}$ can be chosen to be 
independent of the chain $\left( X_n \right)_{n\geq 0}$.
Accordingly, the dual Markov walk $\left(S_n^* \right)_{n\geq 0}$ is defined by setting
\begin{equation}
\label{promenade001}
S_0^* = 0 \qquad \text{and} \qquad S_n^* = -\sum_{k=1}^n \rho \left( X_k^* \right), \quad  n \geq 1.
\end{equation}
For any $y \in \bb R$ define the first time when the Markov walk $\left(y+ S_n^* \right)_{n\geq 0}$ becomes non-positive:
\begin{equation}
\label{timetau001}
\tau_y^* := \inf \left\{ k \geq 1 : y+S_k^* \leq 0 \right\}.
\end{equation}
For any $i\in \bb X$, denote by $\bb P_i^*$ and $\bb E_i^*$ the probability and the associated expectation generated by the finite dimensional distributions of the Markov chain $( X_n^* )_{n\geq 0}$ starting at $X_0^* = i$.

It is easy to verify (see \cite{GrLauvLePage_2018-SPA}), that $\bs \nu$ is also $\mathbf P^*$ invariant and that 
Conditions \ref{primitif} and \ref{Cond non-latice} are satisfied for $\mathbf P^*$. 
This implies that Propositions \ref{PropF-hamon001}-\ref{LLTC} formulated in Subsection \ref{Sec: MWCP} hold also for the 
dual Markov chain $(X_n^*)_{n\geq 0}$ 
and the Markov walk $\left(y+ S_n^* \right)_{n\geq 0}$,
with the harmonic function $V^*$ such that,
for any $(i,y) \in \bb X \times \bb R$ and $n \geq 1$,
\begin{align*}
\bb E_i \left( V^* \left( X_n, y+S_n^* \right) \,;\, \tau^*_y > n \right) = V^*(i,y).
\end{align*}

The following duality property is obvious (see \cite{GrLauvLePage_2018-SPA}):
\begin{lemma}[Duality]
\label{dualityBP}
For any $n\geq 1$ and any function $g$: $\bb X^n \to \bb C$,
\[
\bb E_i \left( g \left( X_1, \dots, X_n \right) \,;\, X_{n+1} = j \right) = \bb E_j^* \left( g \left( X_n^*, \dots, X_1^* \right) \,;\, X_{n+1}^* = i \right) \frac{\bs \nu(j)}{\bs \nu(i)}.
\]
\end{lemma}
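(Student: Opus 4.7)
The plan is to prove the duality identity by direct expansion of both sides as sums over paths and then reversing the time index, exploiting the definition $\mathbf{P}^*(a,b) = \frac{\bs\nu(b)}{\bs\nu(a)}\mathbf{P}(b,a)$ to telescope the $\bs\nu$-ratios.

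First I would write the left-hand side as a sum over paths:
\begin{align*}
\bb E_i\bigl(g(X_1,\ldots,X_n);\,X_{n+1}=j\bigr)
= \sum_{x_1,\ldots,x_n\in\bb X} g(x_1,\ldots,x_n)\,\mathbf{P}(i,x_1)\mathbf{P}(x_1,x_2)\cdots\mathbf{P}(x_{n-1},x_n)\mathbf{P}(x_n,j).
\end{align*}
Similarly I would expand $\bb E_j^*\bigl(g(X_n^*,\ldots,X_1^*);\,X_{n+1}^*=i\bigr)$ as a sum over paths $(y_1,\ldots,y_n)$ of $(X_n^*)$ weighted by $\mathbf{P}^*(j,y_1)\cdots\mathbf{P}^*(y_n,i)$, with argument $g(y_n,\ldots,y_1)$.

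Next I would change variables by setting $y_k:=x_{n+1-k}$, so that $g(y_n,\ldots,y_1)=g(x_1,\ldots,x_n)$ and the sum ranges over the same index set. The string of transitions becomes $\mathbf{P}^*(j,x_n)\mathbf{P}^*(x_n,x_{n-1})\cdots\mathbf{P}^*(x_2,x_1)\mathbf{P}^*(x_1,i)$. Applying $\mathbf{P}^*(a,b)=\frac{\bs\nu(b)}{\bs\nu(a)}\mathbf{P}(b,a)$ to every factor, the $\mathbf{P}$ entries reorganise into the forward order $\mathbf{P}(i,x_1)\mathbf{P}(x_1,x_2)\cdots\mathbf{P}(x_n,j)$, while the $\bs\nu$-ratios telescope to $\bs\nu(i)/\bs\nu(j)$.

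Finally, multiplying by the prefactor $\bs\nu(j)/\bs\nu(i)$ on the right-hand side cancels the leftover telescoped ratio exactly, matching the expansion of the left-hand side termwise. Since $\bb X$ is finite all sums are finite and the computation is entirely algebraic, so no analytical difficulty arises; the only step requiring care is the index reversal $y_k=x_{n+1-k}$ and the bookkeeping of endpoints in the telescoping product of $\bs\nu$-values.
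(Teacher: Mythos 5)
Your computation is correct: expanding both sides over paths, reversing the index via $y_k=x_{n+1-k}$, and applying $\mathbf P^*(a,b)=\frac{\bs\nu(b)}{\bs\nu(a)}\mathbf P(b,a)$ factor by factor makes the $\bs\nu$-ratios telescope to $\bs\nu(i)/\bs\nu(j)$, which is exactly cancelled by the stated prefactor. This is precisely the elementary verification the paper has in mind when it declares the lemma obvious (referring to its earlier work), so your argument is essentially the paper's own.
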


%From Lemma \ref{dualityBP} it follows that
%the operator $\bf P^*$ is the adjoint of $\bf P$ in the space $\LL^2 \left( \bs \nu \right) :$ for any functions $f$ and $g$ on $\bb X,$
%\[
%\bs \nu \left( f \left(\bf P^*\right)^n g \right) = \bs \nu \left( g \bf P^n f \right).
%\]

\section{Preparatory results for branching processes}\label{sec:PrepBranching}
All over the remaining part of the paper we will use the following notation.
For  $ s \in [0,1)$, let
\begin{align*} %\label{}
\varphi_{X_k}(s) =\frac{1}{1-f_{X_k}(s)} - \frac{1}{f'_{X_k}(1)(1-s)},
\end{align*}
and, by continuity,
\begin{align*} %\label{}
\varphi_{X_k}(1) = \lim_{s\to 1} \varphi_{X_k}(s)  = \frac{ f''_{X_k}(1) }{2 f'_{X_k}(1)^2}.
\end{align*}
In addition,   
for $ s \in [0,1)$, let  $g_{z}(s)=s^z$ and
\begin{align*} %\label{}
\psi_{z}(s) =\frac{1}{1-g_{z}(s)} - \frac{1}{g'_{z}(1)(1-s)} = \frac{1}{1-s^z} - \frac{1}{z(1-s)} ,
\end{align*}
and, by continuity,
\begin{align*} %\label{}
\psi_{z}(1) = \lim_{s\to 1} \psi_{z}(s)  = \frac{ z(z-1) }{2 z^2}= \frac{1}{2}\frac{ z-1 }{z}.
\end{align*}

For any $n\geq 1$, $z\geq 1$ and $s\in [0,1]$, 
the following quantity will play an important role in our study:
\begin{align*} %\label{defofq_nz}
q_{n,z} (s) =  1-\big( f_{X_1}\circ \cdots \circ f_{X_n} (s)\big)^z.
\end{align*}
Under Condition \ref{Cond moments}, for any $i\in \mathbb X$ and $s\in [0,1]$ we have $f_i(s)\in [0,1]$
and $f_{X_1}\circ \cdots \circ f_{X_n} (s) \in [0,1]$. 
This implies that, for any $s\in [0,1]$, 
\begin{align} \label{bound for q_{n,z}(s)}
q_{n,z} (s) \in [0,1].
\end{align}
For any $n\geq 1$ and $z\geq 1$, the function $s\mapsto q_{n,z} (s) $ is convex on $[0,1]$.   
Since the sequence $( \xi_i^{n,j} )_{j,n \geq 1}$ is independent 
of the Markov chain $\left( X_n \right)_{n\geq 0},$ with $s=0$, 
we have, $\mathbb P_{i,y,z}^{+}$-a.s., 
\begin{align}\label{SURV000-in0}
q_{n,z} (0) 
= \mathbb P_{i,y,z}^{+} ( Z_n >0  \big| (X_k)_{k\geq 0} ). %\nonumber \\
\end{align}
Note also that  $\{ Z_{n} >0\} \supset \{ Z_{n+1} >0\}$
and therefore, for $n\geq 1$,
\begin{align} \label{q_nz inequality}
q_{n,z} (0) \geq q_{n+1,z} (0).
\end{align}
Taking the limit as $n\to\infty$, $\mathbb P_{i,y,z}^{+}$-a.s.,
\begin{align}\label{SURV000}
\lim_{n\to \infty}  q_{n,z} (0) 
&= \lim_{n\to \infty}  \mathbb P_{i,y,z}^{+} (Z_n >0 \big| (X_k)_{k\geq 0}  ) \nonumber \\
&=  \mathbb P_{i,y,z}^{+} ( \cap_{n\geq 1} \{ Z_n >0\} \big| (X_k)_{k\geq 0} ). 
%\nonumber \\
%&= 1- \mathbb P_{i,z} (  \cup_{k= 1}^n \{ Z_k =0\} \big| (X_k)_{1\leq k\leq n} ).
%\big(1-\big( f_{X_1}\circ \cdots \circ f_{X_n} (0)\big)^z \big).
\end{align}
Moreover, by convexity of the function $q_{n,z}(s)$ we have 
\begin{align}\label{Ineqz001}
q_{n,z}(0) \leq z e^{S_n}.
\end{align}

The following formula (whose proof is left to the reader) is similar to the well-known statements from the papers by Agresti \cite{agresti_bounds_1974} 
and Geiger and Kersting \cite{geiger_survival_2001}: 
for any $s \in [0,1)$ and $n \geq 1$, 
\begin{align} \label{Agresti000}
\frac{1}{q_{n,z} (s)} %&:=  \frac{1}{1-\big( f_{X_1}\circ \cdots \circ f_{X_n} (s)\big)^z} \nonumber \\
&= \frac{1}{z f'_{X_1}(1)  \cdots  f'_{X_n}(1)   (1-s) } \nonumber \\
&+ \frac{1}{z}\sum_{k=1}^{n} \frac{\varphi_{X_{k}} \circ f_{X_{k+1}} \circ \cdots  \circ f_{X_n}   (s)}{f'_{X_1}(1)  \cdots  f'_{X_{k-1}}(1)  }  \nonumber\\
&+ \psi_{z} \circ f_{X_{1}} \circ \cdots  \circ f_{X_n}  (s).
\end{align}
%Using \eqref{Agresti000} with $z=1$ we can rewrite \eqref{Agresti000} in the following compact form: 
%\begin{align*}
%\frac{1}{1-\big( f_{X_1}\circ \cdots \circ f_{X_n} (s)\big)^z} 
%&= \frac{1}{z}\frac{1}{1- f_{X_1}\circ \cdots  \circ  f_{X_n}(s)   } \\
%&+ \psi_{z} \circ f_{X_{1}} \circ \cdots  \circ f_{X_n}  (s).
%\end{align*}
We can rewrite \eqref{Agresti000} 
in the following more convenient form: 
for any $s \in [0,1)$ and $n \geq 1$, 
\begin{align}\label{AAA01}
q_{n,z} (s)^{-1} &= \frac{1}{z}\bigg( \frac{\e^{-S_n}}{1-s} + \sum_{k=0}^{n-1} \e^{-S_{k}} \eta_{k+1,n}(s) \bigg) \nonumber\\
	&+ \psi_{z} \circ f_{X_{1}} \circ \cdots  \circ f_{X_n}  (s),
\end{align}    
where 
$$
\eta_{k,n}(s) =  \varphi_{X_{k}} \circ f_{X_{k+1}} \circ \cdots  \circ f_{X_n}   (s).
$$
Since $\frac{1}{2}\varphi(0) \leq  \varphi(s) \leq 2 \varphi(1) $, for any $k \in \{ 1, \dots, m \}$, %\refref
\begin{equation}\label{AAA02}
0 \leq \eta_{k,m}(s) \leq  \frac{f''_{X_{k}}(1)}{f'_{X_{k}}(1)^2} \leq \eta:=\max_{i\in \mathbb X} \frac{f''_{i}(1)}{f'_i (1)^2}.
\end{equation}
By Theorem 5 of \cite{athreya1971branching1}, for any $(i,y) \in \supp(V)$, $s \in[0,1)$, $m\geq 1$ and $k \in \{ 1, \dots, m \}$, 
there exists a random variable $\eta_{k,\infty}(s)$, such that
\begin{align} \label{AAA02a}
\lim_{n\to+\infty} \eta_{k,n}(s) = \eta_{k,\infty}(s) %\qquad \bb P_{i,y}^+\text{-a.s.},
\end{align}
everywhere, and by \eqref{AAA02}, for any $s \in[0,1)$ and $k \geq 1 $,
\begin{equation}
	\label{AAA02b}
	\eta_{k,\infty}(s) \in [0,\eta].
\end{equation}
In the same way, 
\begin{align} \label{AAA501}
\lim_{n\to+\infty}  \psi_{z} \circ f_{X_{1}} \circ \cdots  \circ f_{X_n}  (s) = \psi_{z,\infty}(s) \in [0,\frac{z-1}{2}] 
\end{align}
everywhere.
For any $s \in [0,1)$, define $q_{\infty,z}(s)$ by setting 
\begin{equation}
	\label{AAA02c}
	q_{\infty,z}(s)^{-1} := \frac{1}{z}\left[ \sum_{k=0}^{+\infty} \e^{-S_k} \eta_{k+1,\infty}(s) \right] + \psi_{z,\infty}(s).
\end{equation}
By Lemma \ref{lemma sum eS_n }, we have that 
\begin{align} \label{AAA02d}
\bb E_{i,y}^+ q_{\infty,z}(s)^{-1} <+\infty.
\end{align}

\begin{lemma} \label{lemma q_n conv vers q }
Assume Conditions \ref{primitif}, \ref{Cond non-latice} and $k'(0)=0$. % $\bs \nu(\rho) = 0$. 
For any $(i,y) \in \supp V$, $z\geq 1$ and $s\in [0,1)$,
\begin{align*}
\lim_{n\to+\infty}  \bb E_{i,y}^+ \left|\frac{1}{q_{n,z}(s) } - \frac{1}{q_{\infty,z}(s) } \right| = 0
\end{align*}
and
\begin{align*}
\lim_{n\to+\infty}  \bb E_{i,y}^+ \left| q_{n,z}(s) - q_{\infty,z}(s) \right| = 0.
\end{align*}
\end{lemma}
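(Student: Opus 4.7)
The plan is to subtract the definition~\eqref{AAA02c} from the identity~\eqref{AAA01}, obtaining
\begin{align*}
\frac{1}{q_{n,z}(s)} - \frac{1}{q_{\infty,z}(s)}
&= \frac{\e^{-S_n}}{z(1-s)} + \bigl[\psi_z \circ f_{X_1} \circ \cdots \circ f_{X_n}(s) - \psi_{z,\infty}(s)\bigr] \\
&\quad + \frac{1}{z}\biggl[\sum_{k=0}^{n-1} \e^{-S_k}\eta_{k+1,n}(s) - \sum_{k=0}^{\infty} \e^{-S_k}\eta_{k+1,\infty}(s)\biggr],
\end{align*}
and then to bound each of the three summands separately in $\bb E^{+}_{i,y}$-expectation. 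The first has expectation at most $c(1+\max(y,0))\e^{y}/\bigl(z(1-s)\,n^{3/2} V(i,y)\bigr)$ by Lemma~\ref{lemma sum eS_n } and vanishes as $n\to\infty$. The second tends to $0$ pointwise by~\eqref{AAA501}; since $\psi_z$ is continuous on the compact interval $[0,1]$ (with $\psi_z(1)=(z-1)/(2z)$), it is uniformly bounded and bounded convergence disposes of it.

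The main effort concerns the third summand. For a truncation $N\geq 1$ I would split
\begin{align*}
&\sum_{k=0}^{n-1} \e^{-S_k}\eta_{k+1,n}(s) - \sum_{k=0}^{\infty} \e^{-S_k}\eta_{k+1,\infty}(s) \\
&= \sum_{k=0}^{N-1} \e^{-S_k}\bigl[\eta_{k+1,n}(s) - \eta_{k+1,\infty}(s)\bigr] + \sum_{k=N}^{n-1} \e^{-S_k}\eta_{k+1,n}(s) - \sum_{k=N}^{\infty} \e^{-S_k}\eta_{k+1,\infty}(s),
\end{align*}
with the convention that the middle sum is empty when $N\geq n$. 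By the uniform bound~\eqref{AAA02}, both tails are dominated pointwise by $\eta\sum_{k\geq N}\e^{-S_k}$, whose $\bb E^{+}_{i,y}$-expectation tends to $0$ as $N\to\infty$ by Lemma~\ref{lemma sum eS_n }. For each fixed $N$, the finite head converges to $0$ in $L^{1}(\bb P^{+}_{i,y})$ as $n\to\infty$: each summand vanishes pointwise by~\eqref{AAA02a} and is dominated by $2\eta\,\e^{-S_k}$. Sending $n\to\infty$ first and then $N\to\infty$ completes the first claim.

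For the second claim, observe that $q_{n,z}(s),\,q_{\infty,z}(s)\in(0,1]$ almost surely under $\bb P^{+}_{i,y}$: the former because $s<1$ together with Condition~\ref{Cond moments} force $f_{X_1}\circ\cdots\circ f_{X_n}(s)<1$, the latter because the series defining $q_{\infty,z}(s)^{-1}$ is a.s.\ finite by Lemma~\ref{lemma sum eS_n }. The elementary identity $|a-b|=ab\,|1/a-1/b|$ for $a,b\in(0,1]$ then yields
\begin{align*}
\bigl|q_{n,z}(s) - q_{\infty,z}(s)\bigr| \leq \left|\frac{1}{q_{n,z}(s)} - \frac{1}{q_{\infty,z}(s)}\right|,
\end{align*}
reducing the second assertion to the first. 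The crux, and the main obstacle, is the $L^{1}$-summability of $\sum_{k\geq 0}\e^{-S_k}$ under $\bb P^{+}_{i,y}$ supplied by Lemma~\ref{lemma sum eS_n }; it underpins both the well-definedness of $q_{\infty,z}$ and the interchange of limit and infinite sum in the dominant term.
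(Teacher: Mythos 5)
Your proof is correct and follows essentially the same route as the paper: subtract the Agresti-type identity \eqref{AAA01} from \eqref{AAA02c}, kill the $\e^{-S_n}$ and $\psi_z$ terms by Lemma \ref{lemma sum eS_n } and bounded convergence, truncate the $\eta$-series and control head and tails by dominated convergence together with Lemma \ref{lemma sum eS_n }, then deduce the second limit from the first via $q_{n,z}(s),q_{\infty,z}(s)\in(0,1]$. The only differences are cosmetic (explicit rate for the first term, spelling out the inequality $|q_{n,z}-q_{\infty,z}|\leq|q_{n,z}^{-1}-q_{\infty,z}^{-1}|$ which the paper leaves implicit).
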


\begin{proof}
%%%%%%%%%%%%%%%%%%%%%%%%%%%%%%%%%%%%%%%%%%%%%
% this is the sketch of the proof: DO NOT DELETE IT !!!!!!!!!!!!
%%%%%%%%%%%%%%%%%%%%%%%%%%%%%%%%%%%%%%%%%%%%%
%The proof is similar to that of Lemma 3.2 in \cite{GrLauvLePage_2018-SPA}
We give a sketch only.
%and therefore is left to the reader. 
Following the proof of Lemma 3.2 in \cite{GrLauvLePage_2018-SPA}
for any $(i,y) \in \supp V$,  by \eqref{AAA01}, \eqref{AAA02} and \eqref{AAA02b}, we obtain
\begin{align} \label{AAA606}
\bb E_{i,y}^+ \left( \abs{q_{n,z}^{-1}(s) - q_{\infty,z}^{-1}(s)} \right) 
&\leq \frac{1}{z(1-s)}\bb E_{i,y}^+ \left( \e^{-S_n} \right) \nonumber\\ 
&+ \frac{1}{z} \bb E_{i,y}^+ \left( \sum_{k=0}^{l} \e^{-S_k} \abs{\eta_{k+1,n}(s) - \eta_{k+1,\infty}(s) } \right) \nonumber\\
&+ \frac{2\eta}{z} \bb E_{i,y}^+ \left( \sum_{k=l+1}^{+\infty} \e^{-S_k} \right) \nonumber\\
	&+ \bb E_{i,y}^+ \big|\psi_{z}\circ f_{X_1} \circ\dots \circ f_{X_n}(s) - \psi_{z,\infty}(s)\big|.
\end{align}
%The difference is in handling 
The last term in the right hand side of \eqref{AAA606} converges to $0$ as $n\to \infty$ by \eqref{AAA501}.  
% additional term $\psi_{z,\infty}(s)$ in \eqref{AAA02c}.
By Lemma \ref{lemma sum eS_n }  and the Lebesgue dominated convergence theorem, we have
\begin{align*}
	\limsup_{n\to\infty} \bb E_{i,y}^+ \left( \abs{q_{n,z}^{-1}(s) - q_{\infty,z}^{-1}(s)} \right) 
	\leq \frac{2\eta}{z} \bb E_{i,y}^+ \left( \sum_{k=l+1}^{+\infty} \e^{-S_k} \right).
\end{align*}
Taking the limit as $l\to \infty$, again by Lemma \ref{lemma sum eS_n }, we conclude the first assertion of the lemma. 
The second assertion follows from the first one, since $q_{n,z}(s) \leq 1$ and $q_{\infty,z}(s)\leq 1$. 
\end{proof}

\begin{lemma} \label{lemma boundprob001}
Assume Conditions \ref{primitif}, \ref{Cond non-latice} and $k'(0)=0$.
For any $(i,y)\in \supp V$ and $z\in \mathbb N$, $z\not=0$,
we have, for any $k \geq 1$, $\mathbb P_{i,y,z}^{+}$-a.s.,
\begin{align*}
\mathbb P_{i,y,z}^{+} (  \cup_{k\geq 1} \{ Z_k =0\} \big| (X_k)_{k\geq 1} ) <1.
\end{align*}
\end{lemma}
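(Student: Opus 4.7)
The plan is to recognize that the statement of the lemma is simply the $\mathbb{P}_{i,y,z}^{+}$-almost sure positivity of the (conditional) survival probability of $Z_n$ given the environment. Using \eqref{SURV000}, which expresses the conditional non-extinction probability as $\lim_{n\to\infty} q_{n,z}(0)$, the lemma is equivalent to
\[
\lim_{n\to\infty} q_{n,z}(0) > 0 \qquad \mathbb{P}_{i,y,z}^{+}\text{-a.s.}
\]
So I would prove this inequality.

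First, I would observe that by \eqref{q_nz inequality} the sequence $(q_{n,z}(0))_{n\geq 1}$ is non-increasing in $n$, hence the limit $\lim_{n\to\infty} q_{n,z}(0)$ exists pointwise, $\mathbb{P}_{i,y,z}^{+}$-a.s. Next, I would identify this limit with $q_{\infty,z}(0)$ defined in \eqref{AAA02c}. Indeed, the second assertion of Lemma \ref{lemma q_n conv vers q } with $s=0$ yields $L^1(\mathbb{P}_{i,y}^{+})$-convergence of $q_{n,z}(0)$ towards $q_{\infty,z}(0)$; in view of \eqref{changemes000} this also gives $L^1(\mathbb{P}_{i,y,z}^{+})$-convergence. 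Extracting an almost sure subsequence and combining with the monotonicity established above forces the full pointwise limit to agree with $q_{\infty,z}(0)$, $\mathbb{P}_{i,y,z}^{+}$-a.s.

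Finally, I would use \eqref{AAA02d}, which states $\mathbb{E}_{i,y}^{+} q_{\infty,z}^{-1}(0) < +\infty$. This integrability forces $q_{\infty,z}^{-1}(0) < +\infty$ and therefore $q_{\infty,z}(0) > 0$, $\mathbb{P}_{i,y}^{+}$-a.s., and again via \eqref{changemes000} the same conclusion holds $\mathbb{P}_{i,y,z}^{+}$-a.s. Translating back through \eqref{SURV000}, this is exactly
\[
\mathbb{P}_{i,y,z}^{+}\!\left(\cup_{k\geq 1}\{Z_k = 0\}\,\big|\,(X_k)_{k\geq 1}\right) = 1 - q_{\infty,z}(0) < 1 \qquad \mathbb{P}_{i,y,z}^{+}\text{-a.s.}
\]

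The only delicate point is the identification of the pointwise monotone limit with $q_{\infty,z}(0)$; this is where one must carefully invoke both the $L^1$-convergence from Lemma \ref{lemma q_n conv vers q } and the monotonicity \eqref{q_nz inequality}, rather than appealing to any stronger a.s.\ statement not provided. Once that identification is made, the integrability bound \eqref{AAA02d} does all the work and the conclusion is immediate.
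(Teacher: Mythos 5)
Your proof is correct, and its core mechanism is the same as the paper's: reduce the claim via \eqref{SURV000} to the $\mathbb P^{+}$-a.s.\ positivity of $\lim_{n} q_{n,z}(0)$, and obtain that positivity from the integrability of the inverse, which ultimately rests on Lemma \ref{lemma sum eS_n}. The routing differs slightly. The paper never identifies the monotone limit with $q_{\infty,z}(0)$: it bounds $\mathbb E_{i,y}^{+}\, q_{n,z}(0)^{-1}$ directly from \eqref{AAA01}--\eqref{AAA02}, applies Lemma \ref{lemma sum eS_n}, and uses monotone convergence on the non-decreasing sequence $q_{n,z}(0)^{-1}$ to get $\mathbb E_{i,y}^{+}\lim_n q_{n,z}(0)^{-1}<\infty$, hence positivity of the limit. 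You instead invoke the $L^1$-convergence of Lemma \ref{lemma q_n conv vers q} (with $s=0$, which is allowed since $s\in[0,1)$), extract an a.s.\ subsequence, combine with the monotonicity \eqref{q_nz inequality} to identify the pointwise limit with $q_{\infty,z}(0)$, and then conclude from \eqref{AAA02d}. Both arguments are valid; the paper's is marginally more self-contained (one monotone-convergence step, no subsequence extraction), while yours buys the extra information that the conditional survival probability equals $q_{\infty,z}(0)$ a.s., which is in the spirit of how $U(i,y,z)$ is used later in \eqref{NotUUU001}. The only point to state explicitly, in either version, is that $q_{n,z}(0)$ and $q_{\infty,z}(0)$ are measurable with respect to the environment $(X_k)_{k\geq1}$, whose law is the same under $\mathbb P_{i,y}^{+}$ and $\mathbb P_{i,y,z}^{+}$ by \eqref{changemes000} (extended to the whole sequence), so the a.s.\ conclusion transfers; you do note this, and the paper relies on it implicitly.
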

\begin{proof} 
By \eqref{SURV000} we have, $\mathbb P_{i,z,y}^{+}$-a.s., 
\begin{align*}
1- \mathbb P_{i,z,y}^{+} (  \cup_{k\geq 1} \{ Z_k =0\} \big| (X_k)_{k\geq 1} )
= \lim_{n\to \infty}  q_{n,z} (0).
\end{align*}
Using \eqref{AAA01} and  \eqref{AAA02} 
\begin{align}\label{AAA04}
\mathbb E_{i,y}^{+} q_{n,z} (0)^{-1} &\leq  \frac{1}{z} \mathbb E_{i,y}^{+} \bigg( \e^{-S_n} + \eta \sum_{k=0}^{n}  \e^{-S_{k-1}}  \bigg) + 1.
\end{align}    
By Lemma \ref{lemma sum eS_n } and a monotone convergence argument,
\begin{align}\label{AAA05}
\mathbb E_{i,y}^{+} \lim_{n\to\infty} q_{n,z} (0)^{-1} = \lim_{n\to\infty}\mathbb E_{i,y}^{+} q_{n,z} (0)^{-1} < \infty.
\end{align}    
Thus, $\mathbb P_{i,y}^{+}$-a.s.,
\begin{align*}
\lim_{n\to\infty} q_{n,z} (0)  >0,
\end{align*}
which ends the proof of the lemma. 
\end{proof}

We will make use of the following lemma:
\begin{lemma}\label{lemma bound tau<=n}
There exists a constant $c$ such that, for any $z\in \mathbb N$, $z\not=0,$
and $y\geq 0$ sufficiently large,
\begin{align*} 
\sup_{i\in \mathbb X}\mathbb P_{i,z} \bigg(  Z_n >0, \tau_{y} \leq n \bigg) 
\leq  c z\frac{ e^{-y} (1+ \max\{y,0 \})}{\sqrt{n}}.
\end{align*}
\end{lemma}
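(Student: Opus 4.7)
The plan is to split the event $\{Z_n > 0,\,\tau_y \leq n\}$ at $\tau_y = \lfloor n/2 \rfloor$ and bound each piece by a complementary tool: a uniform walk-tail estimate when $\tau_y$ is large, and the single-particle survival asymptotic when $\tau_y$ is small. Set $A_1 = \mathbb P_{i,z}(Z_n > 0,\,\tau_y \leq \lfloor n/2 \rfloor)$ and $A_2 = \mathbb P_{i,z}(Z_n > 0,\,\lfloor n/2 \rfloor < \tau_y \leq n)$, so it suffices to bound each by a constant multiple of $z e^{-y}(1+\max\{y,0\})/\sqrt{n}$.

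For $A_2$ I would first observe that on $\{\tau_y \leq n\}$ the pathwise environment bound $q_{n,z}(0) \leq z e^{-y}$ holds: by \eqref{q_nz inequality} and \eqref{Ineqz001}, $q_{n,z}(0) \leq q_{\tau_y,z}(0) \leq z e^{S_{\tau_y}} \leq z e^{-y}$ since $S_{\tau_y} \leq -y$ by definition of $\tau_y$. Taking environment expectation and using $\mathbb P_{i,z}(Z_n>0\mid (X_k)_{k\geq 0}) = q_{n,z}(0)$ yields $A_2 \leq z e^{-y}\mathbb P_i(\tau_y > \lfloor n/2 \rfloor)$. Summing the estimate in point 2 of Proposition \ref{Prop Init002} over the finite set $\mathbb X$ gives $\mathbb P_i(\tau_y > m) \leq c(1+\max\{y,0\})/\sqrt{m}$, hence $A_2 \leq c z e^{-y}(1+\max\{y,0\})/\sqrt{n}$.

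For $A_1$ I decompose $A_1 = \sum_{k=1}^{\lfloor n/2 \rfloor} \mathbb P_{i,z}(Z_n > 0,\,\tau_y = k)$ and apply the Markov property of $(X_n, Z_n)$ at the deterministic time $k$: writing $g(i',z') = \mathbb P_{i',z'}(Z_{n-k} > 0)$ and noting that $\{\tau_y = k\}$ is $\sigma(X_1,\ldots,X_k)$-measurable, one obtains $\mathbb P_{i,z}(Z_n>0,\,\tau_y = k) = \mathbb E_{i,z}[g(X_k,Z_k);\,\tau_y = k]$. Since conditionally on the environment the $z'$ initial particles spawn independent single-particle lines, the elementary inequality $1-(1-x)^{z'}\leq z' x$ yields $g(i',z') \leq z'\mathbb P_{i',1}(Z_{n-k} > 0)$; combining the $z=1$ case of Theorem \ref{Tsurvival001}, proved in \cite{GrLauvLePage_2018-SPA}, with the finiteness of $\mathbb X$ furnishes the uniform bound $\sup_{i'\in\mathbb X} \mathbb P_{i',1}(Z_m > 0) \leq c/\sqrt{m}$ for all $m\geq 1$, so $g(X_k,Z_k) \leq c Z_k/\sqrt{n-k}$. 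The conditional form $\mathbb E_{i,z}(Z_k \mid X_1,\ldots,X_k) = z e^{S_k}$ of \eqref{wwzz} together with $e^{S_{\tau_y}} \leq e^{-y}$ then gives $\mathbb E_{i,z}[Z_k;\,\tau_y = k] \leq z e^{-y}\mathbb P_i(\tau_y = k)$. Summing over $k$ and using $n-k \geq n/2$ for $k \leq \lfloor n/2 \rfloor$ produces $A_1 \leq c z e^{-y}/\sqrt{n}$, and combining with the bound on $A_2$ concludes the lemma. The one nontrivial input, and the step that genuinely creates the $1/\sqrt{n}$ factor in the early piece, is the uniform-in-state single-particle survival estimate, which follows from boundedness of $u(\cdot,1)$ on the finite state space together with the trivial bound $\mathbb P_{i',1}(Z_m>0) \leq 1$ for small $m$.
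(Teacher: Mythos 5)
Your proof is correct, but it takes a genuinely different route from the paper's. The paper keeps the whole event $\{Z_n>0,\ \tau_y\leq n\}$ together: it bounds $q_{n,z}(0)\leq z e^{\min_{1\leq k\leq n}S_k}$, partitions according to the unit interval $(-(j+1),-j]$ containing $\min_{1\leq k\leq n}(y+S_k)$, observes that on that slice $\tau_{y+j+1}>n$, and then applies point 2 of Proposition \ref{Prop Init002} and sums the series $\sum_j e^{-j}(1+y+j)$; the factor $1/\sqrt n$ there comes entirely from the walk staying above a slightly lowered level for the full time $n$. You instead split at $\tau_y=\lfloor n/2\rfloor$: for late crossings your $1/\sqrt n$ again comes from a pure walk estimate ($\mathbb P_i(\tau_y>\lfloor n/2\rfloor)$, obtained from the same point 2 of Proposition \ref{Prop Init002} summed over the finite state space), while for early crossings it comes from the uniform single-particle survival bound $\sup_{i'}\mathbb P_{i',1}(Z_m>0)\leq c/\sqrt m$ over the remaining $n-k\geq n/2$ steps, combined with the conditional first-moment bound $\mathbb E_{i,z}(Z_k\mid X_1,\dots,X_k)=z e^{S_k}\leq z e^{-y}$ on $\{\tau_y=k\}$. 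The individual steps are sound: the Markov property of $(X_n,Z_n)$ at the deterministic time $k$, the $\sigma(X_1,\dots,X_k)$-measurability of $\{\tau_y=k\}$, and the branching inequality $1-(1-x)^{z'}\leq z'x$ are all used correctly, and your bound even holds for every $y\geq 0$ (with a trivial separate check for very small $n$, where $\lfloor n/2\rfloor=0$). The trade-off is that your early-crossing piece imports the $z=1$ survival asymptotic of Theorem \ref{Tsurvival001} from \cite{GrLauvLePage_2018-SPA} — legitimate and non-circular, since that case is proved there independently and the present lemma is only needed within this paper for $z>1$ — whereas the paper's argument needs no branching input beyond $q_{n,z}(0)\leq z e^{\min_{1\leq k\leq n} S_k}$ and is self-contained at the level of random-walk estimates.
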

\begin{proof}
We follow the same line as the proof of Theorem 1.1 in \cite{GrLauvLePage_2018-SPA}. First, we have 
\begin{align*}
\mathbb P_{i,z}(Z_n>0, \tau_y \leq n) = \mathbb P_i(q_{n,z}(0); \tau_y \leq n).
\end{align*}
%By convexity of the function $q_{n,z}(s)$ we have 
%\begin{align}\label{Ineqz001}
%q_{n,z}(0) \leq z e^{S_n}.
%\end{align}
Using \eqref{Ineqz001}
and the fact that 
$q_{n,z}(0)$ is non-increasing in $n$, we have 
$$
q_{n,z}(0) \leq z e^{\min_{1\leq k\leq n} S_k}.
$$
Setting $B_{n,j}=\{ -(j+1) <  \min_{1\leq k\leq n} (y+S_k) \leq -j \}$, this implies
\begin{align*}
&\mathbb P_{i,z}(Z_n>0, \tau_y \leq n) \\
& \leq z \mathbb E_{i,0}(e^{\min_{1\leq k\leq n} S_k}; \tau_y \leq n)    \\
&\leq z e^{-y}\sum_{j=1}^{\infty} \mathbb E_{i,0}(e^{\min_{1\leq k\leq n} (y+S_k)}; %\\ 
B_{n,j}, %-(j+1) <  \min_{1\leq k\leq n} (y+S_k) \leq -j,    
\tau_y \leq n)\\
&\leq z e^{-y}\sum_{j=1}^{\infty} e^{-j} \mathbb P_{i,0}( \tau_{y+j+1} > n).
\end{align*}
Using the point 2 of Proposition \ref{Prop Init002} we obtain the assertion of the lemma. 
\end{proof}

It is known from the results in \cite{grama_limit_2016-1} that when $y$ is sufficiently large, then $(i,y) \in \supp V$. 
For $(i,y) \in \supp V$, set 
\begin{align} \label{NotUUU001}
U(i,y,z) :=  \mathbb E_{i,y}^{+} q_{\infty,z}(0) = \mathbb P_{i,z,y}^{+} (\cap_{n\geq1} \{ Z_n>0\}).
\end{align}
Theorem \ref{Tsurvival001} is a direct consequence of the following proposition, 
which extends Theorem 1.1 in \cite{GrLauvLePage_2018-SPA} to the case $z>1$.  
%\todo{This is the proof of Theorem 1.1: Proposition \ref{prop-converg y 001} is exactly Theorem 1.1}
% \cite{GrLauvLePage_2018-SPA}.
%\todo{Move this lemma to a previous section}
\begin{proposition}\label{prop-converg y 001}
Assume Conditions \ref{primitif}-\ref{cond Kersting2017}. Suppose that $i\in \mathbb X$ and $z\in \mathbb N$. 
Then for any $i \in \mathbb X$ the limit as $y\to \infty$ of $V(i,y) U(i,y,z)$ exists and satisfies
\begin{align*}
u(i,z ) :=  \lim_{y\to \infty}  \frac{2}{\sqrt{2\pi} \sigma} V(i,y) U(i,y,z) >0 .
\end{align*}
Moreover,
\begin{align*}
\lim_{y\to \infty} \sqrt{n}   \mathbb P_{i,z} (Z_n>0, X_n=j) = %\lim_{y\to \infty} \frac{2}{\sqrt{2\pi} \sigma} V(i,y) U(i,y,z) \bs \nu (j) =: 
u(i,z ) \bs \nu (j).
\end{align*}
%where $U(i,y,z)=\mathbb E_{i,y}^{+} q_{\infty,z}(0)$.  
\end{proposition}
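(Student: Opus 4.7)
The strategy is to decompose the survival probability at time $n$ according to whether the Markov walk $(y+S_k)$ has exited $(0,\infty)$ before time $n$, to apply the conditioned limit theorem of Lemma \ref{lemma limit 003} on the non-exit event, and to remove the auxiliary parameter $y$ by a Cauchy argument.

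First, using that $\mathbb P_{i,z}(Z_n>0 \mid (X_k)_k) = q_{n,z}(0)$ is an $\mathscr F_n$-adapted $[0,1]$-valued sequence, I would write
\begin{align*}
\mathbb P_{i,z}(Z_n>0, X_n=j)
&= \mathbb E_i\!\bigl(q_{n,z}(0);\, X_n=j,\, \tau_y>n\bigr) \\
&\quad + \mathbb P_{i,z}(Z_n>0,\,X_n=j,\,\tau_y\leq n),
\end{align*}
and dominate the last term by $c z e^{-y}(1+y)/\sqrt{n}$ via Lemma \ref{lemma bound tau<=n}, uniformly in $n$ and $j$.

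Second, I would verify that $q_{n,z}(0) \to q_{\infty,z}(0)$ $\mathbb P^+_{i,y,z}$-almost surely, by passing to the limit in the Agresti-type identity \eqref{AAA01} with $s=0$, using \eqref{AAA02a}, \eqref{AAA501} and \eqref{AAA02c}, together with $e^{-S_n}\to 0$ $\mathbb P^+_{i,y,z}$-a.s.\ (a consequence of Lemma \ref{lemma sum eS_n }); the positivity of the limit is guaranteed by Lemma \ref{lemma boundprob001}. Then Lemma \ref{lemma limit 003} applied to $Y_n = q_{n,z}(0)$, combined with the asymptotic $\sqrt{n}\,\mathbb P_i(\tau_y>n)\to \frac{2V(i,y)}{\sqrt{2\pi}\sigma}$ from Proposition \ref{Prop Init002}, yields
\[
\lim_{n\to\infty}\sqrt{n}\,\mathbb E_i\!\bigl(q_{n,z}(0);\,X_n=j,\,\tau_y>n\bigr) = \frac{2V(i,y)}{\sqrt{2\pi}\sigma}\,U(i,y,z)\,\bs\nu(j),
\]
after identifying $\mathbb E^+_{i,y,z}(q_{\infty,z}(0)) = \mathbb E^+_{i,y}(q_{\infty,z}(0)) = U(i,y,z)$ via \eqref{changemes000}. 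Combining this with the exit-term bound gives, for every $y$ sufficiently large that $(i,y)\in\supp V$,
\[
\limsup_{n\to\infty}\!\left|\sqrt{n}\,\mathbb P_{i,z}(Z_n\!>\!0,X_n\!=\!j) - \tfrac{2V(i,y)U(i,y,z)}{\sqrt{2\pi}\sigma}\bs\nu(j)\right| \le c z e^{-y}(1+y).
\]

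Third, fixing $j$ with $\bs\nu(j)>0$, the previous display and the triangle inequality show that $y\mapsto V(i,y)U(i,y,z)$ is Cauchy as $y\to\infty$, so its limit exists and defines $\frac{\sqrt{2\pi}\sigma}{2}u(i,z)$; re-inserting this into the same display yields the second assertion of the proposition for every $j$. Positivity of $u(i,z)$ follows by a comparison argument: starting with $z$ independent particles in a common environment gives $q_{n,z}(0) = 1-(1-q_{n,1}(0))^z \geq q_{n,1}(0)$, hence $U(i,y,z)\geq U(i,y,1)$ and therefore $u(i,z)\geq u(i,1)>0$, where the positivity of $u(i,1)$ is part of Theorem 1.1 of \cite{GrLauvLePage_2018-SPA}. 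The main obstacle lies in the second step: the almost-sure convergence of $q_{n,z}(0)$ under $\mathbb P^+_{i,y,z}$ and the control needed to invoke Lemma \ref{lemma limit 003} both rest on the summability $\sum_k e^{-S_k} < \infty$ under the shifted measure, which Lemma \ref{lemma sum eS_n } supplies.
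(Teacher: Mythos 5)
Your proposal is correct and follows essentially the paper's own route: the same decomposition of $\mathbb P_{i,z}(Z_n>0,X_n=j)$ over $\{\tau_y>n\}$ versus $\{\tau_y\le n\}$, the bound of Lemma \ref{lemma bound tau<=n} for the second piece, and Lemma \ref{lemma limit 003} (with $Y_n=q_{n,z}(0)$, equivalent to the paper's choice $Y_n=\mathbbm 1_{\{Z_n>0\}}$) giving the limit $\frac{2V(i,y)}{\sqrt{2\pi}\sigma}U(i,y,z)\bs\nu(j)$ for the first, before letting $y\to\infty$. The only deviations are minor and both sound: you deduce the existence of $\lim_{y\to\infty}V(i,y)U(i,y,z)$ by a Cauchy argument from the two-sided error bound, where the paper uses monotonicity in $y$ of $\mathbb P_{i,z}(Z_n>0,X_n=j,\tau_y>n)$ plus boundedness, and you obtain positivity by the comparison $q_{n,z}(0)\ge q_{n,1}(0)$ together with the $z=1$ result of \cite{GrLauvLePage_2018-SPA}, where the paper argues self-containedly from $\mathbb E_{i,y}^{+}q_{\infty,z}^{-1}(0)<\infty$ (see \eqref{AAA02d}) and $V(i,y)>0$ for large $y$.
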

\begin{proof}
%The proof is similar to that of Theorem 1.1 in \cite{GrLauvLePage_2018-SPA}. A ajouter en Appendice.
By Lemma \ref{lemma limit 003} and \eqref{NotUUU001}, for $(i,y)\in \supp V$,
\begin{align}\label{UUU0001}
&\lim_{n\to\infty}\sqrt{n} \bb P_{i,z} \left( Z_n > 0 \,,\, X_n = j \,,\, \tau_y > n \right) \nonumber \\
&= \frac{2V(i,y)}{\sqrt{2\pi}\sigma} \bs \nu (j)
\mathbb P_{i,y,z}^{+} (\cap_{n\geq1} \{ Z_n>0\}) \nonumber \\
&= \frac{2V(i,y)}{\sqrt{2\pi}\sigma}  U(i,y,z) \bs \nu (j). % \mathbb E_{i,y}^{+} q_{\infty,z}(0).  
\end{align}
%Note that \todo{Changes !!!}
\begin{align}\label{UUU0002}
 \sqrt{n}   \mathbb P_{i,z} (Z_n>0, X_n=j)
& = \sqrt{n}   \mathbb P_{i,z} (Z_n>0, X_n=j, \tau_y > n)  \nonumber \\
& +\sqrt{n}   \mathbb P_{i,z} (Z_n>0, X_n=j, \tau_y \leq n)  \nonumber \\
&= J_1(n,y) + J_2(n,y).
 \end{align}
By Lemma \ref{lemma bound tau<=n},
\begin{align} \label{UUU0003}
J_2(n,y) \leq c z  e^{-y} (1+ \max\{y,0 \}).
\end{align}
From \eqref{UUU0001}, \eqref{UUU0002} and \eqref{UUU0003}, when $y$ is sufficiently large,
\begin{align} \label{UUU0004}
\limsup_{n\to\infty} \sqrt{n}   \mathbb P_{i,z} (Z_n>0, X_n=j) 
&\leq
\frac{2V(i,y)}{\sqrt{2\pi}\sigma}  U(i,y,z) \bs \nu (j) % \mathbb E_{i,y}^{+} q_{\infty,z}(0) 
\nonumber \\
&+ c z  e^{-y} (1+ \max\{y,0 \}) <\infty.
\end{align}
Similarly, when $y$ is sufficiently large,
\begin{align} \label{UUU0005}
L_0=\liminf_{n\to\infty} \sqrt{n}   \mathbb P_{i,z} (Z_n>0, X_n=j) 
&\geq
\frac{2V(i,y)}{\sqrt{2\pi}\sigma} U(i,y,z) \bs \nu (j) . % \mathbb E_{i,y}^{+} q_{\infty,z}(0).
\end{align}
Since $\bb P_{i,z} \left( Z_n > 0 \,,\, X_n = j \,,\, \tau_y > n \right)$ is non-decreasing in $y$, from 
\eqref{UUU0001} it follows that
%notation \eqref{NotUUU001}, 
the function 
\begin{align} \label{UUU0006}
%\frac{2V(i,y)}{\sqrt{2\pi}\sigma} \mathbb E_{i,y}^{+} q_{\infty,z}(0)= 
u(i,y,z ) := \frac{2V(i,y)}{\sqrt{2\pi}\sigma} U(i,y,z)
\end{align}
is non-decreasing in $y$. Moreover, by \eqref{UUU0004} and \eqref{UUU0005}, we deduce that 
$u(i,y,z)$ %\eqref{UUU0006} 
as a function of $y$ is bounded  by $L_0$. Therefore its limit as $y \to \infty$ exists:
$
u(i,z)= \lim_{y\to \infty } u(i,y,z).
$
To prove that $U(i,y,z)=\mathbb E_{i,y}^{+} q_{\infty,z}(0) >0$, 
it is enough to remark that, by \eqref{AAA02d}, it holds $\mathbb E_{i,y}^{+} q^{-1}_{\infty,z}(0) <\infty$.
On the other hand $V(i,y)>0$ for large enough $y$. Therefore $u(i,z)>0$, which proves the first assertion.  
 The second assertion follows immediately from \eqref{UUU0004} and \eqref{UUU0005} by letting $y\to\infty$. 
%\mathbb P_{i,z,y}^{+} (  \cup_{k\geq 1} \{ Z_k =0\} \big| (X_k)_{k\geq 1} )
\end{proof}

%%%%%%%%%%%%%%%%%%%%%%%%%%%%%%%%%%%%%%%%%%%%%%%%%%%%%%%%%%%%%%%
\section{Proof of Theorem \ref{T-Loi-limite001}} \label{sec:proofTh1}

All over this section we denote, for $n\geq 1$,
\begin{align*}
T_n=\sup\{ 0\leq k \leq n :   S_k =\inf \{ S_0,\ldots,S_n \}  \},
\end{align*}
and, for $0 \leq k \leq n$,
\begin{align*}
L_{k,n} = \inf_{k\leq j\leq n} \big( S_{j} - S_{k}\big).
\end{align*}
Recall the following identities which will be useful in the proofs:
\begin{align} 
\{T_k=k \}&= \{S_0\geq S_k, S_1\geq S_k,\ldots, S_{k-1}\geq S_{k} \} \label{relat001} \\
\{L_{k,n} > 0 \}&= \{S_{k+1}\geq S_k, S_{k+2} \geq S_k,\ldots, S_n\geq S_k \} \label{relat002} \\
\{T_n=k \} &= \{T_k=k \} \cap \{L_{k,n} >0 \}. \label{relat003}
\end{align}
For any $n\geq 1,$ set
\begin{align} \label{def P_m 001}
P_{n}(i,s,z)=\mathbb E_{i,z} (e^{-e^{-s}\frac{Z_{n}}{e^{S_{n}}} };  Z_{n}>0, L_{0,n} >0 ).
\end{align}
It 
%%%%%%%%%%%%%%%%%%%%%%%%%%%%%%%%%%%%%%%%%%%%%%%%%%%%%%%%%
%\todos{\scriptsize Referee's remark: $L_{0,n}$ is not defined at this point. \\Correction made: we moved the definition 
%of $L_{0,n}$ to the beginning of this section — see the red text.}
%%%%%%%%%%%%%%%%%%%%%%%%%%%%%%%%%%%%%%%%%%%%%%%%%%%%%%%%%
is easy to see that, by the definition \eqref{def-tau001} of $\tau_y$, 
we have $\{\tau_0>n\}=\{L_{0,n}>0\}$, so 
that \eqref{def P_m 001} is equivalent to
\begin{align} \label{def P_m 002}
P_{n}(i,s,z) = \mathbb E_{i,z} (e^{-e^{-s}\frac{Z_{n}}{e^{S_{n}}} }; Z_{n}>0,  \tau_{0} >n   ).
\end{align} 

We prove first a series of auxiliary statements.  
\begin{lemma}\label{lemma bound e power}
Assume Conditions \ref{primitif}-\ref{cond Kersting2017}.
Let $s\geq 0$. 
For any $(i,0)\in \supp V$ and $z\in \mathbb N$, $z\not=0$, there exists a positive random variable $W_{i,z}$ such that
\begin{align*} 
%\sup_{i\in \mathbb X , s \geq 0  }
\lim_{n\to \infty} \sqrt{n}P_{n}(i,s,z) = 
2\frac{V(i,0)}{\sqrt{2\pi}\sigma}
%\mathbb E^{+}_{i,z} \big(e^{-W_{i,z}e^{-s}}; \cap_{p\geq 1} \{ Z_p >0\} \big)=:
P_{\infty}(i,s,z), 
% c\frac{ z}{n^{3/2}}. 
%\sup_{i\in \mathbb X}\mathbb P_{i,z} \bigg(   T_n=n,  Z_n >0 \bigg) \leq c\frac{ z}{n^{3/2}}.
\end{align*}
where
\begin{align*}
P_{\infty}(i,s,z) := \mathbb E^{+}_{i,0,z} \big(e^{-W_{i,z}e^{-s}}; \cap_{p\geq 1} \{ Z_p >0\} \big)\leq 1.
\end{align*}
Moreover, for any $(i,0)\in \supp V$ and $z\in \mathbb N$, $z\not=0$, 
%there exists a positive random variable $W_{i,z}$ such that 
it holds $\mathbb P^{+}_{i,0,z}$-a.s.
\begin{align*} 
%\sup_{i\in \mathbb X , s \geq 0  }
\cap_{p\geq 1} \{ Z_p >0\} = \{ W_{i,z} >0\}.   % c\frac{ z}{n^{3/2}}. 
%\sup_{i\in \mathbb X}\mathbb P_{i,z} \bigg(   T_n=n,  Z_n >0 \bigg) \leq c\frac{ z}{n^{3/2}}.
\end{align*}
For any $(i,0)\not\in \supp V$ and $z\in \mathbb N$, $z\not=0$, 
\begin{align*} 
\lim_{n\to \infty} \sqrt{n}P_{n}(i,s,z) = 0.
\end{align*}
\end{lemma}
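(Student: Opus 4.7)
The strategy is to apply the conditioned limit theorem Lemma~\ref{lemma limit 003} to the bounded, $\mathscr F_n$-adapted sequence $Y_n := e^{-e^{-s} Z_n/e^{S_n}}\,\mathbbm{1}_{\{Z_n>0\}}$, then sum over $j\in\mathbb X$ and combine with Proposition~\ref{Prop Init002}.1 to recover $\sqrt{n}\,P_n(i,s,z)$. The required $\mathbb P^+_{i,0,z}$-a.s.\ convergence of $Y_n$ comes from treating $W_n := Z_n/e^{S_n}$ as a martingale on the enlarged filtration $\mathscr G_n := \sigma(X_k,k\geq 0)\vee\sigma(Z_0,\ldots,Z_n)$: conditionally on the full environment, $\mathbb E_{i,z}(Z_n\mid\mathscr G_{n-1}) = Z_{n-1}\,e^{S_n-S_{n-1}}$, so $W_n$ is a nonnegative $\mathscr G_n$-martingale under $\mathbb P_{i,z}$. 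Because the Radon--Nikodym density in \eqref{changemes001} is $\sigma(X_k)$-measurable, the martingale property transfers to $\mathbb P^+_{i,0,z}$, and Doob's theorem yields $W_n\to W_{i,z}$ $\mathbb P^+_{i,0,z}$-a.s. Combined with the monotonicity $\mathbbm{1}_{\{Z_n>0\}}\downarrow\mathbbm{1}_{\mathscr S}$, where $\mathscr S := \bigcap_{p\geq 1}\{Z_p>0\}$, this gives $Y_n\to e^{-e^{-s}W_{i,z}}\mathbbm{1}_{\mathscr S}$ a.s.

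For $(i,0)\in\supp V$, applying Lemma~\ref{lemma limit 003} to each of the finitely many terms in the identity $\sqrt{n}\,P_n(i,s,z) = \sum_{j\in\mathbb X}\sqrt{n}\,\mathbb E_{i,z}(Y_n; X_n=j,\tau_0>n)$ and using Proposition~\ref{Prop Init002}.1 for the asymptotics of $\sqrt{n}\,\mathbb P_i(\tau_0>n)$ yields, via $\sum_j\bs\nu(j)=1$, the announced limit $\tfrac{2V(i,0)}{\sqrt{2\pi}\sigma}P_\infty(i,s,z)$ with $P_\infty(i,s,z) := \mathbb E^+_{i,0,z}(e^{-e^{-s}W_{i,z}};\mathscr S)$; the bound $P_\infty\leq 1$ is automatic. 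The case $(i,0)\notin\supp V$ is even simpler: $V(i,0)=0$, and the crude bound $P_n(i,s,z)\leq\mathbb P_i(\tau_0>n)$ combined with Proposition~\ref{Prop Init002}.1 gives $\sqrt{n}\,P_n(i,s,z)\to 0$.

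The main obstacle is the non-degeneracy identity $\mathscr S = \{W_{i,z}>0\}$ $\mathbb P^+_{i,0,z}$-a.s. The inclusion $\{W_{i,z}>0\}\subseteq\mathscr S$ is immediate, since extinction forces $W_n=0$ from some point on. For the converse, I would work conditionally on the environment $(X_k)$: the Laplace transform satisfies $\mathbb E(e^{-tW_n}\mid(X_k))=(F_n(e^{-te^{-S_n}}))^{z}=1-q_{n,z}(e^{-te^{-S_n}})$ with $F_n := f_{X_1}\circ\cdots\circ f_{X_n}$. Plugging $s_n:=e^{-te^{-S_n}}$ into Agresti's formula \eqref{AAA01} and exploiting the $\mathbb P^+_{i,0,z}$-a.s.\ divergence $S_n\to+\infty$ (a standard property of walks conditioned to stay positive, consistent with the Rayleigh limit of Proposition~\ref{Prop-CondLimTh}), the first term of \eqref{AAA01} tends to $1/(zt)$, while Lemma~\ref{lemma sum eS_n } together with \eqref{AAA02}--\eqref{AAA02b} controls the $\eta$-series and justifies the interchange of limits by dominated convergence. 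Sending $n\to\infty$ and then $t\to\infty$ produces an explicit formula for $\mathbb P^+(W_{i,z}=0\mid(X_k))$, which must then be matched with the analogous formula for $\mathbb P^+(\mathscr S^c\mid(X_k))=1-\lim_n q_{n,z}(0)$ obtained by setting $s=0$ in \eqref{AAA01}. Condition~\ref{cond Kersting2017} is the essential input that forces these two limits to coincide, by ruling out degenerate offspring laws that would permit $W_n$ to collapse to $0$ on the survival event. This double-limit comparison is the technical heart of the proof.
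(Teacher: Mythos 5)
Your treatment of the main limit and of the case $(i,0)\notin\supp V$ is correct and essentially coincides with the paper's argument: $Z_n/e^{S_n}$ is a nonnegative martingale under $\mathbb P^{+}_{i,0,z}$ (the paper asserts this directly for $(\mathscr F_n)$; your detour through the enlarged filtration with an environment-measurable reweighting amounts to the same fact), hence $Y_n=e^{-e^{-s}Z_n/e^{S_n}}\mathbbm 1_{\{Z_n>0\}}$ converges a.s., and Lemma~\ref{lemma limit 003} (summed over $j\in\bb X$, combined with point 1 of Proposition~\ref{Prop Init002}) gives the stated limit, while $V(i,0)=0$ settles the degenerate case.

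The genuine gap is in the identity $\cap_{p\geq1}\{Z_p>0\}=\{W_{i,z}>0\}$ $\mathbb P^{+}_{i,0,z}$-a.s. The paper does not prove this from scratch: it conditions on the environment, checks hypothesis (A) of Kersting \cite{Kersting_2017} from Condition~\ref{cond Kersting2017} (with the moment assumptions), uses Lemma~\ref{lemma boundprob001} to ensure that the conditional extinction probability is a.s.\ strictly less than $1$, and then invokes Theorem~2 of \cite{Kersting_2017} to conclude that $\mathbb P^{+}_{i,0,z}(\cup_{p}\{Z_p=0\}\mid (X_n))=\mathbb P^{+}_{i,0,z}(W_{i,z}=0\mid (X_n))$. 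You propose instead to re-derive this by a double limit of conditional Laplace transforms through Agresti's formula \eqref{AAA01}, but the decisive step --- that the two limits coincide, i.e.\ that conditionally on the environment $W_{i,z}>0$ a.s.\ on the survival event --- is exactly the content of Kersting's theorem, and you do not carry it out; you yourself label it ``the technical heart''. The sketch also has concrete difficulties: with $s_n=e^{-te^{-S_n}}\to1$ the quantities $\eta_{k+1,n}(s_n)$ have an $n$-dependent argument, so \eqref{AAA02a} (convergence for fixed $s\in[0,1)$) does not apply and a uniformity argument is needed; and the claim that Condition~\ref{cond Kersting2017} ``rules out'' collapse of $W$ is an assertion rather than a proof --- for branching processes in varying environment, without a quantitative second-moment/truncation input of the kind required in \cite{Kersting_2017}, the martingale limit can indeed vanish on the survival event. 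To close the argument you should either cite Theorem~2 of \cite{Kersting_2017} as the paper does (after verifying its hypothesis and invoking Lemma~\ref{lemma boundprob001}), or supply the missing uniform estimates that make your double-limit comparison rigorous.
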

\begin{proof}
Denote $Y_n= e^{-e^{-s}\frac{Z_{n}}{e^{S_{n}}} } \mathbf 1 \{ Z_n>0\}$. 
Since $\left(\frac{Z_{n}}{e^{S_{n}}}\right)_{n\geq0}$ 
is a positive $((\mathscr F_n)_{n\geq0}, \mathbb P^{+}_{i,0,z}  )$-martingale,   
its limit, say $W_{i,z}=\lim_{n\to\infty}\frac{Z_{n}}{e^{S_{n}}},$
exists $\mathbb P^{+}_{i,0,z}$-a.s.\ and is non-negative. 
Therefore,  $\mathbb P^{+}_{i,0,z}$-a.s. 
\begin{align*}
\lim_{n\to \infty} Y_n = e^{-e^{-s} W_{i,z}  }  \mathbf 1 \{ \cap_{p\geq 1} \{ Z_p >0\}  \}.
\end{align*}
Now 
%\todos{\scriptsize The changes in red on this page where not demanded. We just improve the presentation. }
the first assertion follows from the Lemma  \ref{lemma limit 003}.

For the second assertion we use a result from Kersting \cite{Kersting_2017}
stated in a more general setting of branching processes with varying environment. 
%which concerns the limit $W_{i,z}$. 
%(see also Theorem 1.3 Kersting and Vatutin \cite{kersting_vatutin_2017}).
To apply it we shall condition with respect to the environment $(X_n)_{n\geq 0}$, so that one can consider that the environment is fixed. 
The condition  (A) in Kersting \cite{Kersting_2017} is obviously verified because of the Condition \ref{cond Kersting2017}.
Moreover, according to Lemma \ref{lemma boundprob001}, the extinction probability satisfies $P_{i,0,z}^{+}$-a.s. 
\begin{align*}
\mathbb P_{i,0,z}^{+} (  \cup_{p\geq 1} \{ Z_p =0\} \big| (X_n)_{n\geq 1} ) <1.
\end{align*}
By Theorem 2 in \cite{Kersting_2017}, this implies that, $P_{i,0,z}^{+}$-a.s. 
\begin{align*} %\label{}
\mathbb P_{i,0,z}^{+} (  \cup_{p\geq 1} \{ Z_p =0\} \big| (X_n)_{n\geq 1} )
=\mathbb P_{i,0,z}^{+} (  W_{i,z} =0 \big| (X_n)_{n\geq 1} ).
\end{align*}
Since $P_{i,0,z}^{+}$-a.s.\ $ \cup_{p\geq 1} \{ Z_p =0\}  \subset \{ W_{i,z} = 0\},$
we obtain the second assertion. 

The third one follows from the point 1 of Proposition \ref{Prop Init002}, since $V(i,0)=0$.
This ends the  proof of the lemma. 
 \end{proof}

Remark that $P_{\infty}(i,s,z)$ can be rewritten as
\begin{align*} 
P_{\infty}(i,s,z) = \mathbb E^{+}_{i,0,z} \big(e^{-W_{i,z}e^{-s}}; W_{i,z} >0 \big).
\end{align*}
This shows that $P_{\infty}(i,s,z)$ is the Laplace transformation at $e^{-s}$
of a measure on $\mathbb R_{+}$ %$[0,+\infty)$ 
which assigns the mass $0$ to the set $\{0\}$.

%%%%%%%%%%%%%%%%%%%%%%%%%%%%%%%%%%%%%%%
We will need the following lemma.
\begin{lemma}\label{lemma bound n3/2}
There exists a constant $c$ such that
\begin{align*} 
\sup_{i\in \mathbb X}\mathbb P_{i,z} \big(   T_n=n,  Z_n >0 \big) \leq c\frac{ z}{n^{3/2}}.
\end{align*}
\end{lemma}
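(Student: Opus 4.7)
The plan is to condition on the environment and use the Markov-type bound $q_{n,z}(0)\leq z\e^{S_n}$ from \eqref{Ineqz001} to reduce the claim to controlling $\bb E_i(\e^{S_n};T_n=n)$. On $\{T_n=n\}$ the walk attains its minimum at time $n$, so $S_n\leq 0$; a layer-cake decomposition according to $\lfloor-S_n\rfloor$ will turn the exponential weight into a convergent geometric series. The sharper $n^{-3/2}$ factor (instead of the $n^{-1/2}$ of Lemma \ref{lemma bound tau<=n}) will come from applying the local-limit-type estimate of Proposition \ref{LLTC} to the time-reversed (dual) walk, whose use is legitimate since Conditions \ref{primitif} and \ref{Cond non-latice} transfer to the dual chain.

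More precisely, since the offspring variables are independent of the environment and $\bb E_{i,z}(Z_n\mid X_0,\ldots,X_n)=z\e^{S_n}$, Markov's inequality yields $q_{n,z}(0)\leq z\e^{S_n}$, so
\begin{align*}
\bb P_{i,z}(T_n=n,\,Z_n>0)=\bb E_i\bigl(q_{n,z}(0);\,T_n=n\bigr)\leq z\,\bb E_i\bigl(\e^{S_n};\,T_n=n\bigr).
\end{align*}
Using $S_n\leq 0$ on $\{T_n=n\}$ and splitting according to $S_n\in(-(j+1),-j]$ for $j\geq 0$, I obtain
\begin{align*}
\bb E_i\bigl(\e^{S_n};\,T_n=n\bigr)\leq \sum_{j\geq 0}\e^{-j}\,\bb P_i\bigl(T_n=n,\,S_n\in(-(j+1),-j]\bigr).
\end{align*}

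Next, I would use the duality identity of Lemma \ref{dualityBP} to express the remaining probability through the dual walk $S^*$. Marginalizing over the terminal state $X_{n+1}=\ell$ and time-reversing, the reversed partial sums are $\widetilde S_k=S_{n-k}^*-S_n^*$, so that $\{T_n=n\}$ becomes $\{S_m^*\geq 0\text{ for all }1\leq m\leq n\}$ and $\{S_n\in(-(j+1),-j]\}$ becomes $\{S_n^*\in[j,j+1)\}$. Using the positivity of $\bs\nu$ on $\bb X$ (Condition \ref{primitif}) and $\bf P^*(\cdot,i)\leq 1$, one gets
\begin{align*}
\bb P_i\bigl(T_n=n,\,S_n\in(-(j+1),-j]\bigr)\leq c\,\max_{\ell\in\bb X}\bb P_\ell^*\bigl(S_m^*\geq 0\ \forall\,m\leq n,\ S_n^*\in[j,j+1)\bigr).
\end{align*}
To feed this into Proposition \ref{LLTC}, which uses the strict exit time $\tau^*_y$, I would shift the barrier by an arbitrary small $\varepsilon>0$: since $\{S_m^*\geq 0\}\subset\{\tau_\varepsilon^*>n\}$ with $\tau_\varepsilon^*=\inf\{k\geq 1:\varepsilon+S_k^*\leq 0\}$, Proposition \ref{LLTC} (in its dual form) applied with $y=\varepsilon$, $t=j+\varepsilon$, $a=1$, $\psi\equiv 1$ yields
\begin{align*}
\bb P_\ell^*\bigl(\tau_\varepsilon^*>n,\,S_n^*\in[j,j+1)\bigr)\leq \frac{c\,(1+j)}{n^{3/2}}.
\end{align*}

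Assembling the bounds and summing the convergent series $\sum_{j\geq 0}\e^{-j}(1+j)<\infty$ delivers $\bb P_{i,z}(T_n=n,\,Z_n>0)\leq cz/n^{3/2}$, uniformly in $i\in\bb X$. The only delicate step is the time-reversal: one must verify that the reversed path produces precisely the event $\{S_m^*\geq 0\}$ from the weak inequality appearing in $\{T_n=n\}$, and then reconcile this with the strict exit time $\tau^*_y$ of Proposition \ref{LLTC} via the small-$\varepsilon$ barrier shift while keeping the constants uniform in $i$.
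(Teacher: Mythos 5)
Your proof is correct and follows essentially the same route as the paper: the bound $q_{n,z}(0)\leq z\e^{S_n}$, duality with the reversed walk via Lemma \ref{dualityBP}, and the local limit estimate of Proposition \ref{LLTC} for the dual chain, with the layer-cake decomposition merely performed before rather than after the time reversal. Your explicit $\varepsilon$-shift to reconcile the weak inequalities in $\{T_n=n\}$ with the strict exit time $\tau^*_y$ is a welcome clarification of a point the paper passes over silently.
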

\begin{proof}
Since $T_n$ is a function only on the environment $\left( X_k \right)_{k\geq 0}$, 
conditioning with respect to $\left( X_k \right)_{k\geq 0}$, we have
%\todos{In answer to a referee's remark:  Explanation added.} 
\begin{align*} 
\mathbb P_{i,z} \big(   T_n=n,  Z_n >0 \big)
= \mathbb E_{i,z} \big( q_{n,z}(0); T_n=n \big),
\end{align*}
with $q_{n,z}(0)$ defined by \eqref{SURV000-in0}. %\eqref{defofq_nz}. 
Using the bound \eqref{Ineqz001} we obtain
\begin{align} \label{EqT44-001}
\mathbb P_{i,z} \big(   T_n=n,  Z_n >0 \big)
\leq z \mathbb E_{i} \big( e^{S_n}; T_n=n \big).
\end{align}
By \eqref{relat001} and the duality (Lemma \ref{dualityBP}),
\begin{align} \label{EqT44-002}
\mathbb E_{i} \big( e^{S_n}; T_n=n \big) 
&=  \mathbb E_{\bs \nu}^* \big( e^{-S_n^*}; \tau^*_0>n \big) \frac{1}{\bs \nu(i)} \nonumber \\
&\leq c \mathbb E_{\bs \nu}^* \big( e^{-S_n^*}; \tau^*_0>n \big).
\end{align}
Using the local limit theorem for the dual Markov chain (see Proposition \ref{LLTC}) and following the proof of Lemma \ref{lemma sum eS_n } we obtain
\begin{align} \label{EqT44-003}
\mathbb E_{\bs \nu}^* \big( e^{-S_n^*}; \tau^*_0>n \big) \leq \frac{c}{n^{3/2}}. 
\end{align}
From \eqref{EqT44-001}, \eqref{EqT44-002} and \eqref{EqT44-003} the assertion follows. 
\end{proof}

%%%%%%%%%%%%%%%%%%%%%%%%%%%%%%%%%%%%%%%??????????????????
The key point of the proof of Theorem \ref{T-Loi-limite001} is the following statement.
\begin{proposition} \label{PropTT12-003}
Assume Conditions \ref{primitif}-\ref{cond Kersting2017}.
For any $i\in \mathbb X$, $s\in \mathbb R$ and $z\in \mathbb N$, $z\not=0$,
\begin{align*} 
&\lim_{n\to \infty} \sqrt{n}\mathbb E_{i,z} (  e^{-e^{-s}\frac{Z_n}{e^{S_n}} }; Z_n>0) \\
&=\frac{2}{\sqrt{2\pi}\sigma}\sum_{k=1}^{\infty} 
\mathbb E_{i,z} \big(V(X_k,0) \mathbf 1_{\supp V} (X_k,0) P_{\infty}(X_k,s+S_k,Z_k);\\
&\qquad\qquad\qquad\qquad\qquad\qquad\qquad\qquad\qquad Z_k>0, T_k=k
\big)\\
& =: u(i,z, e^{-s}).
\end{align*}
 With $s=+\infty$,
\begin{align*}
&\lim_{n\to \infty} \sqrt{n}\mathbb P_{i,z} ( Z_n>0)  = u(i,z),\\
\end{align*}
where %\todos{Some minor corrections in answer to a remark by the referee}
\begin{align*}
&u(i,z)=\frac{2}{\sqrt{2\pi}\sigma}\sum_{k=1}^{\infty} \mathbb E_{i,z} \big(
V(X_k,0) \mathbf 1_{\supp V} (X_k,0)
\mathbb P^{+}_{X_k,0,Z_k} ( W_{i,z} >0 );\\
&\qquad\qquad\qquad\qquad\qquad\qquad\qquad\qquad\qquad Z_k>0, T_k=k\big) >0
\end{align*}
is defined in Theorem \ref{Tsurvival001}.
\end{proposition}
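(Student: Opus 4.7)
The plan is to decompose the expectation according to the position $T_n$ of the last minimum of the Markov walk, apply the strong Markov property at that time, and then use Lemma~\ref{lemma bound e power} on each piece. By \eqref{relat003} the events $\{T_k=k\}\cap\{L_{k,n}>0\}$, $k=0,\dots,n$, form a partition of $\Omega$. On $\{T_k=k\}$, conditionally on $\mathscr F_k$, the shifted process $(X'_j,Z'_j):=(X_{k+j},Z_{k+j})$ is a copy of the coupled chain started at $(X_k,Z_k)$; using the identities $S_n=S_k+S'_{n-k}$, $Z_n/e^{S_n}=e^{-S_k}\,Z'_{n-k}/e^{S'_{n-k}}$ and $\{L_{k,n}>0\}=\{\tau'_0>n-k\}$, the inner conditional expectation is exactly $P_{n-k}(X_k,s+S_k,Z_k)$ with $P_{n-k}$ as in \eqref{def P_m 002}. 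Therefore
\begin{align*}
\sqrt n\,\mathbb E_{i,z}\bigl(e^{-e^{-s}Z_n/e^{S_n}};Z_n>0\bigr)=\sum_{k=0}^{n}A_k(n),\qquad A_k(n):=\sqrt n\,\mathbb E_{i,z}\bigl(\mathbf 1_{T_k=k}\,P_{n-k}(X_k,s+S_k,Z_k)\bigr).
\end{align*}

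For each fixed $k$, Lemma~\ref{lemma bound e power} applied to the random initial data $(X_k,s+S_k,Z_k)$ provides the pointwise convergence
\begin{align*}
\sqrt{n-k}\,P_{n-k}(X_k,s+S_k,Z_k)\;\longrightarrow\;\frac{2V(X_k,0)}{\sqrt{2\pi}\sigma}\mathbf 1_{\supp V}(X_k,0)\,P_\infty(X_k,s+S_k,Z_k),
\end{align*}
while the crude bound $P_{n-k}(X_k,s+S_k,Z_k)\leq \mathbb P_{X_k}(\tau_0>n-k)\leq c/\sqrt{n-k}$ from Proposition~\ref{Prop Init002}.2 (with $y=0$) supplies the uniform dominant $\sqrt{n-k}\,P_{n-k}\leq c$. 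Since $\sqrt n/\sqrt{n-k}\to 1$, dominated convergence yields $A_k(n)\to A_k(\infty)$ with
\begin{align*}
A_k(\infty)=\frac{2}{\sqrt{2\pi}\sigma}\,\mathbb E_{i,z}\bigl(V(X_k,0)\mathbf 1_{\supp V}(X_k,0)P_\infty(X_k,s+S_k,Z_k);T_k=k,\,Z_k>0\bigr),
\end{align*}
the extra $\mathbf 1_{Z_k>0}$ coming for free because $P_\infty(X_k,\cdot,0)=0$.

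The main obstacle is the exchange of sum and limit, which requires a uniform-in-$n$ tail estimate on $\sum_{k>K}A_k(n)$. Combining the bound $P_{n-k}\leq \mathbb P_{X_k}(\tau_0>n-k)\leq c/\sqrt{n-k+1}$ with Lemma~\ref{lemma bound n3/2}, namely $\mathbb P_{i,z}(T_k=k,Z_k>0)\leq cz/k^{3/2}$, one obtains
\begin{align*}
A_k(n)\leq \frac{c\,z\,\sqrt n}{k^{3/2}\sqrt{n-k+1}}\qquad(k\geq 1).
\end{align*}
Splitting at $k=\lfloor n/2\rfloor$: for $K<k\leq n/2$ the factor $\sqrt n/\sqrt{n-k+1}$ is bounded by $\sqrt 2$ and $\sum_{K<k\leq n/2}A_k(n)\leq cz\sum_{k>K}k^{-3/2}=O(z/\sqrt K)$; for $n/2<k\leq n$ the lower bound $k^{3/2}\geq C n^{3/2}$ reduces $A_k(n)$ to $cz/(n\sqrt{n-k+1})$, whose sum over this range is $O(z/\sqrt n)\to 0$. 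Hence $\limsup_n\sum_{k>K}A_k(n)=O(z/\sqrt K)\to 0$ as $K\to\infty$, and combined with the termwise convergence this yields $\sum_{k=0}^{n}A_k(n)\to\sum_{k=0}^{\infty}A_k(\infty)=:u(i,z,e^{-s})$, which is the first claim.

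For the second assertion (the case $s=+\infty$, i.e. $\mathbb P_{i,z}(Z_n>0)$), one removes the exponential: the monotone convergence $e^{-e^{-s}w}\uparrow\mathbf 1_{\{w>0\}}$ as $s\to+\infty$, together with the identification $\cap_{p\geq 1}\{Z_p>0\}=\{W_{X_k,Z_k}>0\}$ ($\mathbb P^+_{X_k,0,Z_k}$-a.s.) from Lemma~\ref{lemma bound e power}, gives $P_\infty(X_k,+\infty,Z_k)=\mathbb P^+_{X_k,0,Z_k}(W_{X_k,Z_k}>0)$, yielding the stated formula for $u(i,z)$. Its strict positivity follows immediately from Proposition~\ref{prop-converg y 001}: the limit $\sqrt n\,\mathbb P_{i,z}(Z_n>0)\to u(i,z)$ obtained here must coincide with the strictly positive constant already exhibited there.
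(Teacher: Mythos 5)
Your proof is correct and follows essentially the same route as the paper: decomposition according to the last-minimum time $T_n$ via \eqref{relat003}, the Markov property reducing each term to $P_{n-k}(X_k,s+S_k,Z_k)$, Lemma \ref{lemma bound e power} for each fixed $k$, and the tail of the sum controlled by combining point 2 of Proposition \ref{Prop Init002} with Lemma \ref{lemma bound n3/2}, which is exactly the paper's splitting into $J_{11}(n,l)$ and $J_{12}(n,l)$. The only caveat is the case $s=+\infty$, which (as the paper indicates) is cleanest to justify by re-running the same argument with the functional $\mathbbm 1_{\{Z_n>0\}}$ (via Lemma \ref{lemma limit 003}) rather than by letting $s\to+\infty$ in the already-proved limit; your identification $P_{\infty}(\cdot,+\infty,\cdot)=\mathbb P^{+}_{\cdot,0,\cdot}(W>0)$ and the appeal to Proposition \ref{prop-converg y 001} for the identification with $u(i,z)$ and its positivity are consistent with that reading.
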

%%%%%%%%%%%%%%%%%%%%%%%%%%%%%%%%%%
%%%%%%%%%%%%%%%%%%%%%%%%%%%%%%%%%%
\begin{proof}
 Using \eqref{relat003}, one has
\begin{align} \label{eqPropTT12-001}
 \mathbb E_{i,z} (e^{-e^{-s}\frac{Z_n}{e^{S_n}} }; Z_n>0) 
& =\sum_{k=0}^{n-1} \mathbb E_{i,z} \big( e^{-e^{-s}\frac{Z_n}{e^{S_n}} }; Z_n>0,   T_k=k, L_{k,n} >0 \big) \nonumber\\
&+ \mathbb E_{i,z} \big( e^{-e^{-s}\frac{Z_n}{e^{S_n}} }; Z_n>0,   T_n=n \big)\nonumber\\
&=J_1(n) + J_2(n).
\end{align}
By Lemma \ref{lemma bound n3/2},
\begin{align}\label{eqPropTT12-002}
\limsup_{n\to\infty}\sqrt{n} J_2(n) \leq \lim_{n\to\infty} \sqrt{n} \mathbb P_{i,z} \big(   T_n=n,  Z_n >0 \big) = 0.
\end{align}
%By the Markov property
%\begin{align*}
% \mathbb E_{i,z} (e^{-e^{-s}\frac{Z_n}{e^{S_n}} };  Z_n>0) 
% =\sum_{k=0}^{n} \mathbb E_{i,z} \big(P_{n-k}(X_k,s+S_k,Z_k); T_k=k,  Z_k >0 \big) .
%\end{align*}
%Multiplying by $\sqrt{n}$ and
%\todos{In answer to referree's remark we have added some explanations} 
We now deal with the term $J_1(n)$. %in \eqref{eqPropTT12-001}. 
We shall make use of the notation $P_{n}(i,y,z)$ defined in \eqref{def P_m 002}.
By the Markov property (conditioning 
with respect to $\mathscr F_k = \sigma \{ X_0,Z_0, \ldots, X_k, Z_k \}$), 
and using \eqref{relat002}, we obtain
\begin{align*}
&\mathbb E_{i,z} \big( e^{-e^{-s}\frac{Z_n}{e^{S_n}} }; Z_n>0,   T_k=k, L_{k,n} >0 \big) \\
&\qquad =
\mathbb E_{i,z} \big( P_{n-k}(X_{k},s+S_k,Z_{k}) ;  T_k=k,  Z_k >0 \big). % \\
\end{align*}
Therefore 
\begin{align*}
J_1(n) 
=\sum_{k=0}^{n-1} \frac{1}{\sqrt{n-k}} 
\mathbb E_{i,z} \big( \sqrt{n-k}  P_{n-k}(X_{k},s+S_k,Z_{k}) ;  T_k=k,  Z_k >0 \big). % \\
\end{align*}
Denote for brevity 
\begin{align*}
E_k=
\mathbb E_{i,z} 
\big( 
\sqrt{n-k}  P_{n-k}(X_{k},s+S_k,Z_{k}) ;  T_k=k,  Z_k >0 \big).
\end{align*}
It is easy to see that, with some $   l\leq n$,
\begin{align}\label{eqPropTT12-003}
\sqrt{n}J_1(n)
&= \sum_{k=0}^{l}   \frac{\sqrt{n}}{\sqrt{n-k}} E_k 
 + \sum_{k=l+1}^{n-1}  \frac{\sqrt{n}}{\sqrt{n-k}} E_k \nonumber\\ 
&= J_{11}(n,l) + J_{12}(n,l).
\end{align}
For $J_{12}(n,l)$, we have, using \eqref{def P_m 002},
\begin{align*}
J_{12}(n,l) &= \sum_{k=l+1}^{n-1}  \frac{\sqrt{n}}{\sqrt{n-k}} E_k \\
&\leq \sum_{k=l+1}^{n-1}  \frac{\sqrt{n}}{\sqrt{n-k}} \mathbb E_{i,z} 
\big( \sqrt{n-k}  \mathbb P_{X_{k}} (\tau_0>n-k) ;  T_k=k,  Z_k >0 \big). 
\end{align*}
Using point 2 of Proposition \ref{Prop Init002} and Lemma \ref{lemma bound n3/2}, 
\begin{align*}
J_{12}(n,l)
&\leq c\sum_{k=l+1}^{n-1}  \frac{\sqrt{n}}{\sqrt{n-k}} \mathbb P_{i,z}  \big(  T_k=k,  Z_k >0 \big) \\
&\leq c z\sum_{k=l+1}^{n-1}  \frac{\sqrt{n}}{\sqrt{n-k}} \frac{ 1}{k^{3/2}}\\
%&\leq c z \frac{ 1}{\sqrt{n}} \bigg(1+2\sqrt{\frac{n}{l}}\bigg)\\
&\leq c z\bigg(  \frac{1}{\sqrt{n}} + \frac{2}{\sqrt{l}} \bigg),
\end{align*}
where to bound the second line we split the summation in two parts for $k > k/2$ and $k\leq k/2$. 
Let $\varepsilon >0$ be arbitrary. 
Then there exists $n_{\varepsilon, z}$ such that, for $n\geq l\geq n_{\varepsilon, z}$, 
\begin{align}\label{eqPropTT12-005}
J_{12}(n,l) \leq \varepsilon.
\end{align}
For $J_{11}(n,l)$, we have
\begin{align*}
J_{11}(n,l) = \sum_{k=0}^{l}   \frac{\sqrt{n}}{\sqrt{n-k}} \mathbb E_{i,z} 
\big( 
\sqrt{n-k}  P_{n-k}(X_{k},s+S_k,Z_{k}) ;  T_k=k,  Z_k >0 \big). 
\end{align*}
Since $l$ is fixed, taking the limit as $n\to \infty$, by Lemmata \ref{lemma bound e power} and \ref{lemma bound n3/2}, 
%P_{\infty}(i,s,z)
\begin{align} \label{eqPropTT12-008}
\lim_{n\to\infty} J_{11}(n,l) 
&= \sum_{k=0}^{l} \mathbb E_{i,z} \big( P_{\infty}(X_k,s+S_k,Z_k) ;  T_k=k,  Z_k >0 \big) \nonumber\\
&\leq \sum_{k=0}^{l} \mathbb P_{i,z} \big( T_k=k,  Z_k >0 \big)  \nonumber\\
&\leq \sum_{k=0}^{\infty} \frac{cz}{k^{3/2}}  \leq c z .
\end{align}
Since $\varepsilon$ is arbitrary, 
from \eqref{eqPropTT12-003}, \eqref{eqPropTT12-005} and \eqref{eqPropTT12-008},
taking the limit as $l\to\infty$, 
we deduce that 
\begin{align*}
\lim_{n\to\infty} \sqrt{n}J_{1}(n) 
&= \sum_{k=0}^{\infty} \mathbb E_{i,z} \big( P_{\infty}(X_k,s+S_k,Z_k) ;  T_k=k,  Z_k >0 \big).
\end{align*}
From this and \eqref{eqPropTT12-001}, \eqref{eqPropTT12-002} we deduce the first assertion of the proposition. 
The second one is proved in the same way.
\end{proof}

Now we proceed to prove Theorem \ref{T-Loi-limite001}. Denote by
$$
\mu_{n,i,z,j} (B)  = \mathbb P_{i,z}
   \left( Z_ne^{-S_n} \in B, X_n=j   \big| Z_n>0 \right)
$$
the joint law law of $\frac{Z_n}{e^{S_n}}$ and $X_n=j$ 
given $Z_n>0$ under $\mathbb P_i$, where $B$ is any Borel set of $\mathbb R_+.$
Set for short
$$
\mu_{n,i,z} (B)  = \mathbb P_{i,z}
   \left( Z_ne^{-S_n} \in B   \big| Z_n>0 \right).
$$
We shall prove that the sequence $(\mu_{n,i,z})_{n\geq1}$ is convergent in law. 
For this we use the convergence of the corresponding Laplace transformations: 
\begin{align*}
\mathbb E_{i,z} (  e^{-t\frac{Z_n}{e^{S_n}} } \big| Z_n>0) &= 
\frac{\sqrt{n}\mathbb E_{i,z} (  e^{-t\frac{Z_n}{e^{S_n}} }; Z_n>0)}
{\sqrt{n}\mathbb P_{i,z} ( Z_n>0)}. 
\end{align*}
By Proposition \ref{PropTT12-003}, we see that, with $t=e^{-s}$,
\begin{align*}
\lim_{n\to \infty} \mathbb E_{i,z} (  e^{-e^{-s}\frac{Z_n}{e^{S_n}} } \big| Z_n>0) 
= 
 \frac{u(i,z, e^{-s}) } {u(i,z) }. 
\end{align*}
It is obvious that $u(i,z, e^{-s})$ is also a Laplace transformation
at $t=e^{-s}$
of a measure on $\mathbb R_{+}$ %$[0,+\infty)$ 
which assigns the mass $0$ to the set $\{0\}$
and that $u(i,z)$ is the total mass of this measure.  
Therefore the ratio
$ \frac{u(i,z, e^{-s}) } {u(i,z) }$ is the Laplace transformation of a probability measure $\mu_{i,z}$ on $R_{+}$ %$[0,+\infty)$
such that $\mu_{i,z}(\{0\})=0.$ 
%%%%%%%%%%%%%%%%%%%%%%%%%%%%%%%%%%%%%%%%%%%%%%%%%%%%%%%%%%
%%%%%%%%%%%%%%%%%%%%%%%%%%%%%%%%%%%%%%%%%%%%%%%%%%%%%%%%%%%%%%%%%%%%%%%%%%%%%%%%%%%%%%%%%%%%%%%%%%%%%%%%%%%%%%%%%%%%
\section{Proof of Theorems \ref{T-Loi-limite003} and \ref{T-Yaglom}} \label{sec: 44444}
Recall that by the properties of the harmonic function, for any $i\in \mathbb X$ there exists $y_i\geq 0$
such that $(i,y)\in \supp V$ for any $y\geq y_i.$

First we prove the following auxiliary statement.
\begin{lemma}\label{lemmaTL003-001}
Assume Conditions \ref{primitif}-\ref{cond Kersting2017}.
Let $i\in \mathbb X$ and $z\in \mathbb N$, $z\not = 0$.
For any $\theta \in (0,1)$ and
%$m, n\geq 1$ such that $[\theta n] >m$ %For any $i\in \mathbb X$, $y\in \mathbb R$ %and 
$y\geq 0$ large enough such that $(i,y)\in \supp V$,
\begin{align*}
\lim_{m\to\infty}\lim_{n\to\infty} \sqrt{n} \mathbb P_{i,z}  \left( Z_m>0, Z_{[\theta n]}=0, \tau_y>n  \right) = 0.
\end{align*}
\end{lemma}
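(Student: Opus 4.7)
The plan is to invoke Lemma \ref{lemma limit 003} with the bounded adapted sequence $Y_n := \mathbf 1_{Z_m > 0}\,\mathbf 1_{Z_{[\theta n]}=0}$, together with the asymptotic $\sqrt{n}\,\mathbb P_{i,z}(\tau_y > n)\to 2V(i,y)/(\sqrt{2\pi}\sigma)$, and then exploit the fact that under $\mathbb P^+_{i,y,z}$ eternal survival and eventual extinction are almost surely disjoint (Lemma \ref{lemma bound e power}).

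First, let $T := \inf\{p\geq 1 : Z_p = 0\}$ be the extinction time. Since $[\theta n]\to\infty$ as $n\to\infty$ and $\{Z_{[\theta n]}=0\}$ is an increasing sequence of events in $n$, one has $\mathbf 1_{Z_{[\theta n]}=0}\to\mathbf 1_{T<\infty}$ pointwise. Hence $Y_n$ is a bounded $(\mathscr F_n)$-adapted sequence with
$$
Y_n\longrightarrow Y_\infty := \mathbf 1_{Z_m>0,\,T<\infty}\quad \mathbb P^+_{i,y,z}\text{-a.s.}
$$
Applying Lemma \ref{lemma limit 003} and summing over $j\in\bb X$ (using $\sum_j\bs\nu(j)=1$) yields
$$
\lim_{n\to\infty}\mathbb E_{i,z}\bigl(Y_n \,\big|\, \tau_y>n\bigr) \;=\; \mathbb E^+_{i,y,z}(Y_\infty)\;=\;\mathbb P^+_{i,y,z}(Z_m>0,\,T<\infty).
$$
Combining this with the asymptotic $\sqrt{n}\,\mathbb P_{i,z}(\tau_y>n)\to 2V(i,y)/(\sqrt{2\pi}\sigma)$, which follows from Proposition \ref{Prop Init002}.1 upon summing over $j\in\bb X$, one obtains
$$
\lim_{n\to\infty}\sqrt{n}\,\mathbb P_{i,z}\bigl(Z_m>0,\,Z_{[\theta n]}=0,\,\tau_y>n\bigr)\;=\;\frac{2V(i,y)}{\sqrt{2\pi}\,\sigma}\,\mathbb P^+_{i,y,z}(Z_m>0,\,T<\infty).
$$

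The remaining step is to let $m\to\infty$. The events $\{Z_m>0\}$ form a decreasing sequence whose intersection equals $\bigcap_{p\geq 1}\{Z_p>0\}$. By Lemma \ref{lemma bound e power}, $\mathbb P^+_{i,y,z}$-almost surely one has $\bigcap_{p\geq 1}\{Z_p>0\}=\{W_{i,z}>0\}$, whereas $\{T<\infty\}=\bigcup_{p\geq 1}\{Z_p=0\}=\{W_{i,z}=0\}$. These two events are disjoint, so by monotone convergence
$$
\lim_{m\to\infty}\mathbb P^+_{i,y,z}(Z_m>0,\,T<\infty)\;=\;\mathbb P^+_{i,y,z}\!\left(\bigcap_{p\geq 1}\{Z_p>0\}\cap\{T<\infty\}\right)=0,
$$
which yields the claim.

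I do not expect any serious obstacle: both ingredients (the conditioned limit theorem of Lemma \ref{lemma limit 003} and the dichotomy between survival and extinction under $\mathbb P^+$) are already in hand; the only mild point to verify is that $Y_n$ meets the hypotheses of Lemma \ref{lemma limit 003}, and this is immediate because $[\theta n]\to\infty$ with $n$.
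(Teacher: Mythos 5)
Your proof is correct, but it follows a genuinely different route from the paper. The paper writes $\mathbb P_{i,z}(Z_m>0,\,Z_{[\theta n]}=0,\,\tau_y>n)$ as a difference of survival probabilities, bounds it by $\mathbb E_{i}\bigl(\lvert q_{m,z}(0)-q_{[\theta n],z}(0)\rvert;\,\tau_y>[\theta n]\bigr)$ via the conditional survival probabilities $q_{k,z}(0)$, applies the Markov property at time $[\theta n]$ together with the bound $\mathbb P_{i'}(\tau_{y'}>n')\leq c\,V(i',y')/\sqrt{n'}$, changes measure by \eqref{changemes002}, and concludes with the $L^1$-convergence of $q_{n,z}(0)$ to $q_{\infty,z}(0)$ under $\mathbb E^+_{i,y}$ (Lemma \ref{lemma q_n conv vers q }); your argument instead feeds the indicator sequence $Y_n=\mathbf 1_{Z_m>0}\mathbf 1_{Z_{[\theta n]}=0}$ directly into the general conditioned-limit statement of Lemma \ref{lemma limit 003} (legitimate, since $[\theta n]\leq n$ makes $Y_n$ adapted for $n\geq m$, up to redefining finitely many terms, and the convergence $Y_n\to\mathbf 1_{Z_m>0}\mathbf 1_{T<\infty}$ holds pointwise), multiplies by $\sqrt{n}\,\mathbb P_i(\tau_y>n)\to 2V(i,y)/(\sqrt{2\pi}\sigma)$, and lets $m\to\infty$. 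Your approach is shorter and avoids the generating-function machinery, essentially because Lemma \ref{lemma limit 003} already packages the change-of-measure and uniform-in-$n$ control that the paper redoes by hand here; the paper's computation, on the other hand, is self-contained at this point and also covers the case $(i,y)\notin\supp V$, which your argument does not need since the statement assumes $(i,y)\in\supp V$. One small correction: your appeal to Lemma \ref{lemma bound e power} (stated only for $y=0$) and to $W_{i,z}$ in the last step is both misquoted and unnecessary — the events $\bigcap_{p\geq1}\{Z_p>0\}$ and $\{T<\infty\}=\bigcup_{p\geq1}\{Z_p=0\}$ are complements of each other, so their intersection is empty by pure set theory, and monotone convergence in $m$ finishes the proof without any dichotomy result.
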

\begin{proof}
Let $m, n\geq 1$ be such that $[\theta n] >m$. Then 
\begin{align*}
J_{m,n}(\theta,y)&:=\mathbb P_{i,z}  \left( Z_m>0, Z_{[\theta n]}=0, \tau_y>n  \right) \\
&= \mathbb P_{i,z}  \left( Z_m>0, \tau_y>n  \right) -\mathbb P_{i,z}  \left(Z_{[\theta n]}>0, \tau_y>n  \right)\\
&= \mathbb E_{i,z} 
\big( \mathbb P_{i,z}  \left( Z_m>0 | X_1,\ldots, X_m \right) \\
&\qquad\qquad - \mathbb P_{i,z}  \left(Z_{[\theta n]}>0 | X_1,\ldots, X_{[\theta n]} \right); \tau_y>n   \big)\\
&\leq \mathbb E_{i,z} \big( \left| q_{m,z}(0) - q_{[\theta n],z}(0) \right|; \tau_y>n   \big).
\end{align*}
Denote $P_n(i,y)=\mathbb P_{i}(\tau_y>n).$
 By the Markov property
\begin{align*}
&\mathbb E_{i,z} \big( \left| q_{m,z}(0) - q_{[\theta n],z}(0) \right|; \tau_y>n   \big)\\
&=\mathbb E_{i} \big( \left| q_{m,z}(0) - q_{[\theta n],z}(0) \right|  
P_{n-[\theta n]}(X_{[\theta n]}, y+ S_{[\theta n]}); 
\tau_y>[\theta n]   \big).
\end{align*}
Using point 2 of Proposition \ref{Prop Init002} and the
point 3 of Proposition \ref{PropF-hamon001}, 
on the set $\{ \tau_y>[\theta n] \}$,
\begin{align*}
 P_{n-[\theta n]}(X_{[\theta n]}, y+ S_{[\theta n]},z) 
\leq c\frac{1+y+ S_{[\theta n]}}{\sqrt{n-[\theta n]}} 
 \leq c\frac{1+ V(X_{[\theta n]}, y+ S_{[\theta n]})  }{\sqrt{(1-\theta) n}  }.  
\end{align*}
Therefore
\begin{align*}
&\sqrt{n} J_{m,n}(\theta,y) \\
&\leq 
\mathbb E_{i} \big( \left| q_{m,z}(0) - q_{[\theta n],z}(0) \right|  
\sqrt{n}P_{n-[\theta n]}(X_{[\theta n]}, y+ S_{[\theta n]}); 
\tau_y>[\theta n]   \big)\\
%&\leq
%\frac{c}{\sqrt{1-\theta}  }
%\mathbb E_{i} \big( \left| q_{m,z}(0) - q_{[\theta n],z}(0) \right|  
%(1+ V(X_{[\theta n]}, y+ S_{[\theta n]}) )  ; \tau_y>[\theta n]  \big)\\
&\leq \frac{c}{\sqrt{1-\theta}  }
 \mathbb E_{i} \big( \left| q_{m,z}(0) - q_{[\theta n],z}(0) \right|  ; \tau_y>[\theta n]  \big)   \\
&\quad + \frac{c}{\sqrt{1-\theta}  }
 \mathbb E_{i} \big( \left| q_{m,z}(0) - q_{[\theta n],z}(0) \right|  
V(X_{[\theta n]}, y+ S_{[\theta n]})  ; \tau_y>[\theta n]  \big).
\end{align*}
Using the bound \eqref{bound for q_{n,z}(s)}  and again the point 2 of Proposition \ref{Prop Init002}, 
\begin{align*}
\limsup_{n\to\infty}\mathbb E_{i} \big( \left| q_{m,z}(0) - q_{[\theta n],z}(0) \right|  ; \tau_y>[\theta n]  \big)
\leq \lim_{n\to\infty} \mathbb P_{i} \big( \tau_y>[\theta n]  \big) = 0.
\end{align*}
If $(i,y)\not\in \supp V$,
\begin{align*}
& \mathbb E_{i} \big( \left| q_{m,z}(0) - q_{[\theta n],z}(0) \right| V(X_{[\theta n]}, y+ S_{[\theta n]})  ; \tau_y>[\theta n]  \big) \\
&\leq 
\mathbb E_{i} \big( V(X_{[\theta n]}, y+ S_{[\theta n]})  ; \tau_y>[\theta n]  \big) \\
&= V(i,y)=0.
\end{align*}
If $(i,y)\in \supp V$, changing the measure by \eqref{changemes002}, 
we have
\begin{align*}
& \mathbb E_{i} \big( \left| q_{m,z}(0) - q_{[\theta n],z}(0) \right| V(X_{[\theta n]}, y+ S_{[\theta n]})  ; \tau_y>[\theta n]  \big) \\
&= V(i,y) \mathbb E_{i,y}^+ \left| q_{m,z}(0) - q_{[\theta n],z}(0) \right|.
\end{align*}
Taking the limit as $n\to\infty$ and then as $m\to\infty$, by Lemma \ref{lemma q_n conv vers q },
\begin{align*}
&\lim_{m\to\infty}\lim_{n\to\infty} \mathbb E_{i} \big( \left| q_{m,z}(0) - q_{[\theta n],z}(0) \right| V(X_{[\theta n]}, y+ S_{[\theta n]})  ; \tau_y>[\theta n]  \big) =0.
\end{align*}
Taking into account the previous bounds we obtain the assertion of the lemma. 
\end{proof}

\begin{proof}[Proof of Theorem \ref{T-Loi-limite003}]
%Now we proceed to prove Theorem \ref{T-Loi-limite003}.
Let $i,j\in \mathbb X$, $y\geq 0$, $z\in \mathbb N$, $z\not=0$, $t\in \mathbb R$.  
Then, for any $n\geq 1$ and $y \geq 0,$
\begin{align} \label{proofT1.3-001}
%I(n) :&= 
&\mathbb P_{i,z}  \left( \frac{S_n}{\sqrt{n}\sigma} \leq t, X_n=j, Z_n>0  \right) \nonumber \\
&= \mathbb P_{i,z}  \left( \frac{S_n}{\sqrt{n}\sigma} \leq t, X_n=j, Z_n>0, \tau_y>n  \right)\nonumber \\
&+ \mathbb P_{i,z}  \left( \frac{S_n}{\sqrt{n}\sigma} \leq t, X_n=j, Z_n>0, \tau_y\leq n  \right)\nonumber \\
&=I_{1}(n,y) + I_{2}(n,y).
\end{align}
By Lemma \ref{lemma bound tau<=n}, %for $I_{n,2}(y)$,
%for any $i\in \mathbb X$, $y\geq 0$ and $z\in \mathbb N$, $z\not=0$, 
\begin{align} \label{eq-proofT1.3-301}
 \sqrt{n} I_{2}(n,y) \leq 
 \sqrt{n} P_{i,z}(Z_n>0, \tau_y \leq n)  
 \leq c  z e^{-y}(1+y).
\end{align}

In the sequel we study $I_{1}(n,y)$.
Let $\theta \in (0,1)$ be arbitrary. 
We  decompose $I_{1}(n,y)$ into two parts: 
\begin{align} \label{proofT1.3-002}
I_{1}(n,y) 
%&= \mathbb E_{i,z}  \left( \varphi\left( \frac{S_n}{\sqrt{n}\sigma} \right); X_n=j, Z_n>0, \tau_y>n  \right) \\
&=\mathbb P_{i,z}  \left( \frac{S_n}{\sqrt{n}\sigma} \leq t, X_n=j, Z_{[\theta n]}>0, Z_n>0, \tau_y>n  \right) \nonumber \\
&=\mathbb P_{i,z}  \left( \frac{S_n}{\sqrt{n}\sigma} \leq t, X_n=j, Z_{[\theta n]}>0, \tau_y>n  \right) \nonumber \\
&- \mathbb P_{i,z}  \left(  \frac{S_n}{\sqrt{n}\sigma} \leq t, X_n=j, Z_{[\theta n]}>0, Z_n=0, \tau_y>n  \right) \nonumber \\
&=I_{11}(n,\theta,y)  - I_{12}(n,\theta,y). 
\end{align}

%%%%%%%%%%%%%%%%%%%%%%%%%%%%%%%%%%%%%%
In the following lemma we prove that the second term $\sqrt{n}I_{12}(n,\theta,y)$ vanishes as $n\to \infty.$ 
\begin{lemma}\label{lemma-I2-002}
Assume Conditions \ref{primitif}-\ref{cond Kersting2017}.
For any $i,j\in \mathbb X$, $z\in \mathbb N$, $z\not= 0$, and 
$y\geq 0$ sufficiently large,
\begin{align} \label{eqlemma-I2-002}
\lim_{n\to \infty} \sqrt{n} 
\mathbb P_{i,z}  \left( \frac{S_n}{\sqrt{n}\sigma} \leq t, X_n=j, Z_{[\theta n]}>0, Z_n=0, \tau_y>n  \right) %\nonumber \\
= 0. 
\end{align}
\end{lemma}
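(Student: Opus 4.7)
My plan is to drop the constraint $\{S_n/(\sqrt{n}\sigma)\leq t\}$, which only decreases the probability, and then reduce the statement to an application of Lemma \ref{lemma limit 003}. The first step is the monotonicity bound
\begin{equation*}
\mathbb P_{i,z}\!\left(\frac{S_n}{\sqrt{n}\sigma}\leq t,\, X_n=j,\, Z_{[\theta n]}>0,\, Z_n=0,\, \tau_y>n\right) \leq \mathbb P_{i,z}\!\left(X_n=j,\, Z_{[\theta n]}>0,\, Z_n=0,\, \tau_y>n\right),
\end{equation*}
so it suffices to show that the right-hand side is $o(1/\sqrt{n})$.

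Next, set $Y_n := \mathbf 1_{\{Z_{[\theta n]}>0,\, Z_n=0\}}$, which is bounded and $(\mathscr F_n)_{n\geq 0}$-adapted. The core step is to verify that $Y_n\to 0$ $\mathbb P^+_{i,y,z}$-almost surely. For this I would invoke Lemma \ref{lemma bound e power}, which gives the identity $\cap_{p\geq 1}\{Z_p>0\}=\{W_{i,z}>0\}$ $\mathbb P^+_{i,y,z}$-a.s. On $\{W_{i,z}>0\}$ one has $Z_n>0$ for every $n$, so $Y_n=0$. On the complementary event $\{W_{i,z}=0\}=\cup_{p\geq 1}\{Z_p=0\}$, absorption at $0$ forces $Z_k=0$ for all $k$ beyond some random $p_0$, and then $Z_{[\theta n]}=0$ as soon as $[\theta n]\geq p_0$, so again $Y_n=0$ for all sufficiently large $n$.

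With this almost sure convergence in hand, Lemma \ref{lemma limit 003} applied to the sequence $(Y_n)$ with limit $Y_\infty=0$ yields
\begin{equation*}
\lim_{n\to\infty}\mathbb E_{i,z}\!\left(Y_n;\, X_n=j \,\big|\, \tau_y>n\right) = 0\cdot \bs\nu(j) = 0.
\end{equation*}
Combined with the asymptotic $\mathbb P_i(\tau_y>n)\sim 2V(i,y)/(\sqrt{2\pi}\sigma\sqrt{n})$ obtained by summing point 1 of Proposition \ref{Prop Init002} over $j\in\bb X$, this translates to
\begin{equation*}
\lim_{n\to\infty}\sqrt{n}\,\mathbb P_{i,z}\!\left(Z_{[\theta n]}>0,\, Z_n=0,\, X_n=j,\, \tau_y>n\right) = 0,
\end{equation*}
and the initial monotonicity bound completes the proof. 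The main conceptual step is the $\mathbb P^+$-a.s.\ vanishing of $Y_n$, which is a direct consequence of the dichotomy $\{W_{i,z}>0\}$ vs $\{W_{i,z}=0\}$ furnished by Lemma \ref{lemma bound e power}; no substantive obstacle is anticipated beyond properly applying the existing conditional limit machinery.
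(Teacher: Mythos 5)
Your proof is correct, and it takes a more direct route than the paper. The paper bounds the probability by $\mathbb P_{i,z}(X_n=j, Z_{[\theta n]}>0, Z_n=0, \tau_y>n)$, writes this as the difference $\mathbb P_{i,z}(X_n=j, Z_{[\theta n]}>0, \tau_y>n)-\mathbb P_{i,z}(X_n=j, Z_n>0, \tau_y>n)$, and then shows that both terms, multiplied by $\sqrt n$, converge to the same limit $\tfrac{2V(i,y)}{\sqrt{2\pi}\sigma}U(i,y,z)\bs\nu(j)$ — the second as in \eqref{UUU0001}, the first via an intermediate fixed time $m$, Lemma \ref{lemmaE+} and the auxiliary Lemma \ref{lemmaTL003-001} (which in turn rests on Lemma \ref{lemma q_n conv vers q } and the change of measure). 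You instead feed the single adapted, bounded sequence $Y_n=\mathbf 1_{\{Z_{[\theta n]}>0,\,Z_n=0\}}$ into Lemma \ref{lemma limit 003}, observe $Y_n\to 0$ under $\mathbb P^+_{i,y,z}$, and multiply by the asymptotics of $\mathbb P_i(\tau_y>n)$ from Proposition \ref{Prop Init002}; this short-circuits the difference decomposition and dispenses with Lemma \ref{lemmaTL003-001} altogether, at the price of no new information (the paper's longer computation also yields the limit \eqref{eqPPRR-005}, which it reuses elsewhere in the proof of Theorem \ref{T-Loi-limite003}). One small remark: your appeal to Lemma \ref{lemma bound e power} for the identity $\cap_{p\geq 1}\{Z_p>0\}=\{W_{i,z}>0\}$ is both slightly misquoted (the lemma is stated under $\mathbb P^+_{i,0,z}$ with $(i,0)\in\supp V$, not under $\mathbb P^+_{i,y,z}$ for large $y$) and unnecessary: the dichotomy you actually need is the trivial one between $\cap_{p\geq 1}\{Z_p>0\}$ and $\cup_{p\geq 1}\{Z_p=0\}$, on each of which $Y_n\to 0$ pointwise (by survival of $Z_n$, respectively by absorption of $Z_{[\theta n]}$ once $[\theta n]$ exceeds the extinction time), so no property of $W_{i,z}$ enters. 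With that cosmetic fix the argument is complete.
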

%%%%%%%%%%%%%%%%%%%%%%%%%%%%%%%%%%%%%%
\begin{proof}
Obviously,
\begin{align}\label{eqPPRR-001}
| I_{12}(n,\theta,y)| 
&=\left| \mathbb P_{i,z}  \left( \frac{S_n}{\sqrt{n}\sigma} \leq t, X_n=j, Z_{[\theta n]}>0, Z_n=0, \tau_y>n  \right) \right| \nonumber \\
&\leq \mathbb P_{i,z}  \left( X_n=j, Z_{[\theta n]}>0, Z_n=0, \tau_y>n  \right) \nonumber \\
&=\mathbb P_{i,z}  \left( X_n=j, Z_{[\theta n]}>0, \tau_y>n  \right) \nonumber  \\
&\quad - \mathbb P_{i,z}  \left( X_n=j, Z_n>0, \tau_y>n  \right).
\end{align}
As in \eqref{UUU0001}, %and \eqref{NotUUU001} 
choosing $y\geq 0$ such that $(i,y) \in \supp V$,
we have 
\begin{align} \label{eqPPRR-002}
\lim _{n\to\infty} \sqrt{n} \mathbb P_{i,z}  \left( X_n=j, Z_n>0, \tau_y>n  \right) = \frac{2V(i,y)}{\sqrt{2\pi}\sigma}  U(i,y,z) \bs \nu (j).
\end{align}
We shall prove that for any $\theta \in (0,1)$,
\begin{align} \label{eqPPRR-005}
\lim _{n\to\infty} \sqrt{n} \mathbb P_{i,z}  \left( X_n=j, Z_{[\theta n]}>0, \tau_y>n  \right) = \frac{2V(i,y)}{\sqrt{2\pi}\sigma}  U(i,y,z) \bs \nu (j).
\end{align}
For any $m\geq 1$ and $n$ such that $[\theta n] >m$, 
\begin{align} \label{eqPPRR-006}
&\mathbb P_{i,z}  \left( X_n=j, Z_{[\theta n]}>0, \tau_y>n  \right) \nonumber  \\
&=\mathbb P_{i,z}  \left( X_n=j, Z_m>0, Z_{[\theta n]}>0, \tau_y>n  \right)  \nonumber \\
&=\mathbb P_{i,z}  \left( X_n=j, Z_m>0, \tau_y>n  \right)  \nonumber \\
&\quad - \mathbb P_{i,z}  \left( X_n=j, Z_m>0, Z_{[\theta n]}=0, \tau_y>n  \right).
\end{align}
By Lemma \ref{lemmaE+}, 
\begin{align*} %\label{eqPPRR-007}
&\lim_{n\to\infty} \sqrt{n} \mathbb P_{i,z}  \left( X_n=j, Z_m>0, \tau_y>n  \right) \nonumber \\
&= \frac{2V(i,y)}{\sqrt{2\pi}\sigma}  \bb P_{i,z,y}^+ \left( Z_m>0 \right) \bs \nu (j).
%&= \frac{2V(i,y)}{\sqrt{2\pi}\sigma}  U(i,y,z) \bs \nu (j).
\end{align*}
Taking the limit as $m\to\infty$, by \eqref{NotUUU001}, we have 
\begin{align} \label{eqPPRR-008}
&\lim_{m\to\infty} \lim_{n\to\infty} \sqrt{n} \mathbb P_{i,z}  \left( X_n=j, Z_m>0, \tau_y>n  \right) \nonumber \\
&= \frac{2V(i,y)}{\sqrt{2\pi}\sigma}  \mathbb P_{i,z,y}^{+} (\cap_{m\geq1} \{ Z_m>0\}) \bs \nu (j)  \nonumber  \\
&= \frac{2V(i,y)}{\sqrt{2\pi}\sigma}  U(i,y,z) \bs \nu (j).
\end{align}
By Lemma \ref{lemmaTL003-001},
\begin{align*}
& \limsup_{m\to\infty} \limsup_{n\to\infty} \sqrt{n} \mathbb P_{i,z}  \left( X_n=j, Z_m>0, Z_{[\theta n]}=0, \tau_y>n  \right) \\
&\leq \limsup_{m\to\infty} \lim_{n\to\infty} \sqrt{n} \mathbb P_{i,z}  \left( Z_m>0, Z_{[\theta n]}=0, \tau_y>n  \right) = 0,
\end{align*}
which together with \eqref{eqPPRR-006} and \eqref{eqPPRR-008}  proves \eqref{eqPPRR-005}.
From \eqref{eqPPRR-001}, \eqref{eqPPRR-002} and \eqref{eqPPRR-005} we obtain the assertion of the lemma.
\end{proof}
To handle the term $I_{11}(n,\theta,y)$ we choose any $m$ satisfying $1\leq m \leq [\theta n]$ and split it into two parts:
\begin{align} \label{proofT1.3-003}
I_{11}(n,\theta,y)     
&= \mathbb P_{i,z}  \left( \frac{S_n}{\sqrt{n}\sigma} \leq t, X_n=j, Z_m>0, \tau_y>n  \right) \nonumber \\
&-  \mathbb P_{i,z}  \left( \frac{S_n}{\sqrt{n}\sigma} \leq t, X_n=j, Z_m>0,  Z_{[\theta n]}=0, \tau_y>n  \right) \nonumber \\
& = I_{111}(n,m,y) - I_{112}(n,m,\theta,y).
\end{align}
By Lemma \ref{lemmaTL003-001},  
we have,
\begin{align} \label{lemmaTL003-111}
&\limsup_{m\to\infty}  \limsup_{n\to\infty}
\sqrt{n} I_{112}(n,m,\theta,y) \nonumber \\
&\leq \lim_{m\to\infty} \lim_{n\to\infty} \sqrt{n}\mathbb P_{i,z}  \left(Z_m>0,  Z_{[\theta n]}=0, \tau_y>n  \right) =0.
\end{align}

The following lemma gives a limit for $\sqrt{n}I_{111}(n,m,y)$ as $n\to\infty$ and $m\to\infty$.
\begin{lemma}\label{lemma-I2-104}
Assume Conditions \ref{primitif}-\ref{cond Kersting2017}. 
Suppose that $i,j\in \mathbb X$, $z\in \mathbb N$, $z\not = 0$ and $t\in \mathbb R.$
Then, for any $y\geq 0$ sufficiently large,  
%\todos{Text reworded.}
\begin{align} \label{eqlemma-I2-104}
\lim_{m\to \infty} & \lim_{n\to \infty}  \sqrt{n} 
\mathbb P_{i,z}  \left( \frac{S_n}{\sqrt{n}\sigma} \leq t, X_n=j, Z_m>0, \tau_y>n  \right) \nonumber \\ 
&= \frac{2\Phi^{+} (t)}{\sqrt{2\pi} \sigma} V(i,y) U(i,y,z) \bs \nu (j).
\end{align}
\end{lemma}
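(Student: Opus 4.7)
The plan is to apply the Markov property of $(X_n, Z_n)_{n\geq 0}$ at the fixed time $m$, invoke the conditioned local limit theorem of Proposition~\ref{Prop-CondLimTh} for the walk after time $m$, and then use the harmonic change of measure \eqref{changemes001} before letting $m \to \infty$ via \eqref{NotUUU001}.

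First I would observe that the event $\{Z_m > 0,\, \tau_y > m\}$ lies in $\mathscr F_m$ and that on this event the condition $\{S_n/(\sigma\sqrt n) \leq t,\ X_n = j,\ \tau_y > n\}$ involves only the shifted walk $(X_{m+k}, S_{m+k} - S_m)_{k\geq 0}$, which, given $X_m$, has the same law as the original walk started at $X_m$. Setting $k = n-m$ and $y' = y + S_m$, this produces
\begin{align*}
I_{111}(n,m,y) = \mathbb E_{i,z}\bigl(\mathbf 1\{Z_m > 0,\ \tau_y > m\}\, G_{n,m}(X_m, y+S_m)\bigr),
\end{align*}
where
\begin{align*}
G_{n,m}(i', y') = \mathbb P_{i'}\Bigl(\tfrac{y' + S_k}{\sigma\sqrt k} \leq \widetilde t_{k,n},\ X_k = j,\ \tau_{y'} > k\Bigr),
\end{align*}
with the deterministic threshold $\widetilde t_{k,n} = \sqrt{n/k}\, t + y/(\sigma\sqrt k)$. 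The crucial point is that the apparent $S_m$ contribution inside $\widetilde t_{k,n}$ cancels, so $\widetilde t_{k,n}$ depends only on $y,m,n$ and satisfies $\widetilde t_{k,n} \to t$ as $n \to \infty$ with $m, y$ fixed.

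I would then apply point 2 of Proposition~\ref{Prop-CondLimTh} to $G_{n,m}$, expanding it as
\begin{align*}
G_{n,m}(i', y') = \frac{2V(i', y')}{\sqrt{2\pi k}\,\sigma}\,\Phi^+(\widetilde t_{k,n})\,\bs\nu(j) + O\!\left(\frac{1 + \max(y',0)^2}{k^{1/2 + \varepsilon}}\right).
\end{align*}
On $\{\tau_y > m\}$ one has $0 < y + S_m \leq y + m\|\rho\|_\infty$, so for fixed $m, y$ the remainder is uniformly $O(n^{-\varepsilon})$ after multiplication by $\sqrt n$, while $\sqrt n/\sqrt k \to 1$ and $\Phi^+(\widetilde t_{k,n}) \to \Phi^+(t)$ by continuity. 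Bounded convergence then yields
\begin{align*}
\lim_{n\to\infty}\sqrt n\, I_{111}(n,m,y) = \frac{2\Phi^+(t)\,\bs\nu(j)}{\sqrt{2\pi}\,\sigma}\, \mathbb E_{i,z}\bigl(\mathbf 1\{Z_m > 0\}\, V(X_m, y+S_m);\, \tau_y > m\bigr).
\end{align*}
By \eqref{changemes001} the last expectation equals $V(i,y)\,\mathbb P_{i,y,z}^+(Z_m > 0)$. Since $\{Z_m > 0\}$ decreases in $m$, letting $m \to \infty$ and invoking \eqref{NotUUU001} gives $\mathbb P_{i,y,z}^+(Z_m > 0) \to U(i,y,z)$, which delivers the claimed identity.

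The main obstacle will be the uniform control of the remainder in Proposition~\ref{Prop-CondLimTh} once it is integrated against $\mathbf 1\{Z_m > 0,\, \tau_y > m\}$; this is ensured by the boundedness $|S_m| \leq m\|\rho\|_\infty$, which makes $1 + \max(y+S_m, 0)^2$ uniformly bounded for fixed $m, y$. The case $t < 0$ is trivial: $\{\tau_y > n\}$ forces $S_n > -y$, so $\{S_n \leq \sigma t \sqrt n\} \cap \{\tau_y > n\}$ is empty for $n$ large and $\Phi^+(t) = 0$; for $t \geq 0$ Proposition~\ref{Prop-CondLimTh} applies with any fixed $t_0 > t$.
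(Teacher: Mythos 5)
Your proof is correct and follows essentially the same route as the paper: Markov property at time $m$, the uniform expansion of point 2 of Proposition~\ref{Prop-CondLimTh} for the post-$m$ walk with the deterministic threshold $t\sqrt{n/(n-m)}+y/(\sigma\sqrt{n-m})$, the change of measure \eqref{changemes001} giving $V(i,y)\,\mathbb P^+_{i,y,z}(Z_m>0)$, and then $m\to\infty$. The only cosmetic differences are that you keep the event $\{X_n=j\}$ explicitly inside the local estimate (the paper absorbs it implicitly) and you pass to the limit in $m$ by monotone continuity of $\mathbb P^+_{i,y,z}$ and \eqref{NotUUU001} instead of invoking Lemma~\ref{lemma q_n conv vers q }; both are fine.
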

\begin{proof}
Without loss of generality we can assume that $n\geq2m.$
Let $y \geq 0$ be so large that $(i,y)\in \supp V.$
Set $t_{n,y}=t+\frac{y}{\sqrt{n}\sigma}$.
Then $I_{111}(n,m,y)$ can be rewritten as 
\begin{align*}
I_{111}(n,m,y)= \mathbb P_{i,z}  \left( \frac{y+S_n}{\sqrt{n}\sigma} \leq t_{n,y}, X_n=j, Z_m>0, \tau_y>n  \right).
\end{align*}
If $t<0$, we have $t_{n,y}<0$, for $n$ large enough, then the assertion of the lemma becomes obvious since 
%$\lim_{n\to\infty}
$I_{111}(n,m,y)= 0 =\Phi^{+} (t)$. 
Therefore, it is enough to assume that $t \geq 0$. 
To find the asymptotic of $I_{111}(n,m,y)$ we introduce the following notation: for $y',t'\in \mathbb R$ and $1\leq k=n-m\leq n$,
\begin{align*}
\Phi_{k}(i,y',t') = \mathbb P_{i}\Big(\frac{y'+S_k}{\sigma\sqrt{k}}  \leq t'; \tau_{y'}>k     \Big).
\end{align*}
Set $t_{n,m,y}=t_{n,y}\frac{\sqrt{n}}{\sqrt{n-m}}= (t+\frac{y}{\sqrt{n}\sigma})\frac{\sqrt{n}}{\sqrt{n-m}}.$
By the Markov property,  
\begin{align*}
I_{111}(n,m,y)= \mathbb E_{i,z} \big( \Phi_{n-m}(X_m,y+S_m, t_{n,m,y})      ; Z_m>0, \tau_y>m\big).
\end{align*}
Since $t_{n,m,y} \leq \sqrt{2} (t +y/\sigma)$,
by Proposition \ref{Prop-CondLimTh}, there exists an $\varepsilon>0$ such that 
for any $(i,y')\in \mathbb X\times \mathbb R$,
\begin{align}\label{eqAAA000}
& \sqrt{n-m}\abs{\Phi_{n-m}(i,y',t_{n,m,y}) -\frac{2V(i,y')}{\sqrt{2\pi (n-m)} \sigma}\Phi^{+} \Big(t_{n,m,y} \Big)   } \nonumber \\
&\leq c_{\varepsilon,t,y}\frac{1+\max\{ y',0  \}^2  }{(n-m)^{\varepsilon}}.
\end{align}
Therefore, using \eqref{eqAAA000},
\begin{align*}
&A_n(m,y):=\\
&\abs{I_{111}(n,m,y) - \mathbb E_{i,z} \big(   \frac{2V(X_m,y+S_m)}{\sqrt{2\pi (n-m)} \sigma}\Phi^{+} \big(t_{n,m,y}\big)    ; Z_m>0, \tau_y>m\big)}
 \\ 
 &\leq c_{\varepsilon,t,y}\frac{1+\mathbb E_{i,z} \max\{ y+S_m,0  \}^2  }{(n-m)^{1/2+\varepsilon}}.
\end{align*}
This implies that
\begin{align}\label{eqAA001}
\lim_{n\to \infty} \sqrt{n} A_n(m,y) = 0.
\end{align}
Note that with $m$ and $y\geq 0$ fixed, we have $t_{n,m,y} \to t$ as $n\to\infty$. Therefore
\begin{align} \label{eqAA002}
\lim_{n\to +\infty}
&\sqrt{n} \mathbb E_{i,z} \big(   \frac{2V(X_m,y+S_m)}{\sqrt{2\pi k} \sigma}\Phi^{+} \big(t_{n,m}\big)    ; Z_m>0, \tau_y>m\big) \nonumber \\
&=  \frac{2\Phi^{+} (t)}{\sqrt{2\pi} \sigma} \mathbb E_{i,z} \big(   V(X_m,y+S_m) ; Z_m>0, \tau_y>m\big).
\end{align}
When $(i,y) \in \supp V$, the change of measure \eqref{changemes001}, gives
\begin{align} \label{eqAA005}
\mathbb E_{i,z} & \big( V(X_m,y+S_m) ; Z_m>0, \tau_y>m\big) \nonumber \\
&\qquad\qquad\qquad\qquad\qquad = V(i,y)\mathbb P_{i,y,z}^{+} (Z_m>0).
\end{align}
Since $\mathbb P_{i,y,z}^{+} (Z_m>0) = \mathbb E_{i,y}^{+} q_{m,z}(0)$, using Lemma \ref{lemma q_n conv vers q },
\begin{align*}
\lim_{m\to +\infty} \mathbb P_{i,y,z}^{+} (Z_m>0) = \lim_{m\to +\infty} \mathbb E_{i,y}^{+} q_{m,z}(0)= \mathbb E_{i,y}^{+} q_{\infty,z}(0) = U(i,y,z),
\end{align*}
which, together with \eqref{eqAA001}, \eqref{eqAA002} and \eqref{eqAA005}, gives  
\begin{align*}
\lim_{m\to \infty} \lim_{n\to \infty} \sqrt{n} I_{111}(n,m,y) = \frac{2\Phi^{+} (t)}{\sqrt{2\pi} \sigma} V(i,y) U(i,y,z) \bs \nu (j),
\end{align*}
which proves the assertion of the lemma for $t\geq 0$. %The case $t<0$ is proved similarly.
\end{proof}

%%%%%%%%%%%%%%%%%%%%%%%%%%%%%%%%%%%%%%%%%%%%%%%%%%%%%%%%%%%%%%%%
%\todos{We give details, in response to referee's remark.}
We now perform the final assembling.  % final assembling
From \eqref{proofT1.3-001}, \eqref{eq-proofT1.3-301}, \eqref{proofT1.3-002} and \eqref{eqlemma-I2-002}, 
we have, for any $i,j\in \mathbb X$, $z\in \mathbb N$, $z\not= 0$, $t\in \mathbb R$ and $y$ sufficiently large,   
%as $n\to \infty$,
\begin{align} \label{proofT1-fin001}
&\lim_{n\to \infty}\bigg| \sqrt{n}\mathbb P_{i,z}  \left( \frac{S_n}{\sqrt{n}\sigma} \leq t, X_n=j, Z_n>0  \right)
- \sqrt{n} I_{11}(n,\theta,y) \bigg| \nonumber\\
&%\qquad\qquad\qquad\qquad\qquad\qquad \qquad\qquad\qquad\qquad\qquad 
\leq  c  z e^{-y}(1+y).
\end{align}
From 
%\eqref{proofT1-fin001}, 
%\eqref{proofT1.3-001}, \eqref{eq-proofT1.3-301}, \eqref{proofT1.3-002}, \eqref{eqlemma-I2-002}, 
\eqref{proofT1.3-003}, \eqref{lemmaTL003-111}, \eqref{eqlemma-I2-104}
we obtain
\begin{align} \label{proofT1-fin002}
\lim_{m\to \infty}\lim_{n\to \infty} \sqrt{n} I_{11}(n,\theta,y) 
= \frac{2\Phi^{+} (t)}{\sqrt{2\pi} \sigma} V(i,y) U(i,y,z) \bs \nu (j).
\end{align}
From \eqref{proofT1-fin001} and \eqref{proofT1-fin002}, %\todo{ Attention ?????????? ?????????? !!!}
taking consecutively the limits as $n\to \infty$ and $m\to \infty$, %and finally as $y\to \infty$ 
we obtain, for any $i,j \in \bb X$, $z \in \bb N$, $z\not = 0$, 
$t \in \mathbb R$ and $y\geq 0$ sufficiently large, such that $(i,y)\in\supp V$,
\begin{align*}
\lim_{n\to\infty} \bigg|
&\sqrt{n} \bb P_{i,z}
\left(  \frac{ S_n}{\sigma\sqrt{n}} \leq t, X_n=j,  Z_n>0 \right) 
 - \frac{2\Phi^{+} (t)}{\sqrt{2\pi} \sigma} V(i,y) U(i,y,z) \bs \nu (j) \bigg| \\
&%\qquad\qquad\qquad\qquad\qquad 
\leq  c  z e^{-y}(1+y).
%\Phi^{+} (t)  \bs \nu (j) u(i,z),
\end{align*}
Taking the limit as $y\to \infty$,
we obtain, for any $i,j \in \bb X$, $z \in \bb N$, $z\not = 0$ and $t \in \mathbb R$,
$$
\lim_{n\to\infty} \sqrt{n} \bb P_{i,z} 
\left(  \frac{ S_n}{\sigma\sqrt{n}} \leq t, X_n=j,  Z_n>0 \right) 
= \Phi^{+} (t)  \bs \nu (j) u(i,z),
$$
where $u(i,z)>0$ is defined in Proposition \ref{prop-converg y 001}.
This proves the first assertion of the theorem. 
The second assertion is obtained from the first one by using Theorem \ref{Tsurvival001}.
\end{proof}

%%%%%%%%%%%%%%%%%%%%%%%%%%%%%%%%%%%%%%%%%%%%
%%%%%%%%%%%%%%%%%%%%%%%%%%%%%%%%%%%%%%%%%%%%
%%%%%%%%%%%%%%%%%%%%%%%%%%%%%%%%%%%%%%%%%%%%
%\section{Proof of Theorem \ref{T-Yaglom}} \label{Proof of Theorem Yaglom}
\begin{proof}[Proof of Theorem \ref{T-Yaglom}]
%\todos{Following the referee's remark the proof of Theorem \ref{T-Yaglom} now is included in Section 5}
The assertion of Theorem \ref{T-Yaglom} is easily obtained from Theorem \ref{T-Loi-limite003}.
Let $\varepsilon>0$ be arbitrary. By simple computations,
\begin{align*}
\bb P_{i,z} &\left( \abs{ \frac{\log Z_n}{\sigma\sqrt{n}} - \frac{S_n}{\sqrt{n}\sigma}   } \geq \varepsilon , X_n = j, Z_n>0 \right) \\
&=\bb P_{i,z} \left( \frac{Z_n}{e^{S_n}}   \geq e^{\varepsilon\sqrt{n}\sigma} , X_n = j, Z_n>0 \right) \\
&+\bb P_{i,z} \left( \frac{Z_n}{e^{S_n}}   \leq e^{-\varepsilon\sqrt{n}\sigma} , X_n = j, Z_n>0 \right). 
\end{align*}
Fix some $A>1$. Then, for $n$ sufficiently large, such that $e^{\varepsilon \sqrt{n}}>A$,
\begin{align*}
\bb P_{i,z} &\left( \abs{ \frac{\log Z_n}{\sigma\sqrt{n}} - \frac{S_n}{\sqrt{n}\sigma}   } \geq \varepsilon , X_n = j, Z_n>0 \right) \\
&\leq \bb P_{i,z} \left( \frac{Z_n}{e^{S_n}}   \geq A , X_n = j, Z_n>0 \right) \\
&+\bb P_{i,z} \left( \frac{Z_n}{e^{S_n}}   \leq 1/A , X_n = j, Z_n>0 \right),
\end{align*}
where, by Theorem \ref{T-Loi-limite001} 
\begin{align*}
\limsup_{n\to+\infty}\sqrt{n} \bb P_{i,z} \left( \frac{Z_n}{e^{S_n}}   \geq A , X_n = j, Z_n>0 \right)
\leq  \mu_{i,z} ([A,+\infty]) \bs \nu (j) u(i,z),
\end{align*}
and 
\begin{align*}
\limsup_{n\to+\infty}\sqrt{n} \bb P_{i,z} \left( \frac{Z_n}{e^{S_n}}   \leq 1/A , X_n = j, Z_n>0 \right)
\leq \mu_{i,z} ([0,1/A]) \bs \nu (j) u(i,z).
\end{align*}
Since $\mu_{i,z}$ is a probability mesure of mass $0$ in $0$, taking the limit as $A\to +\infty$, we have that 
\begin{align*}
\lim_{n\to+\infty}\sqrt{n}\bb P_{i,z} \left( \abs{ \frac{\log Z_n}{\sigma\sqrt{n}} - \frac{S_n}{\sqrt{n}\sigma}   } \geq \varepsilon , X_n = j, Z_n>0 \right) 
=0.
\end{align*}
As $\varepsilon$ is arbitrary, using  Theorem \ref{T-Loi-limite003} we conclude the proof. 
\end{proof}

%%%%%%%%%%%%%%%%%%%%%%%%%%%%%%%%%%%%%%%%%%%%%%%%%%%%%%%
%%%%%%%%%%%%%%%%%%%%%%%%%%%%%%%%%%%%%%%%%%%%%%%%%%%%%%%
%%%%%%%%%%%%%%%%%%%%%%%%%%%%%%%%%%%%%%%%%%%%%%%%%%%%%%%
%\bibliographystyle{plain}
%\bibliography{biblioT22}

\end{document}